\documentclass[a4paper,11pt]{amsart}

\usepackage{amsmath,amsfonts,amssymb,graphics,amsthm,mathrsfs,pgfplots}
\pgfplotsset{compat=1.18}

\usepackage{hyperref}
\usepackage{enumitem}
\usepackage{bbm}
\usepackage{mathtools}
\usepackage{a4wide}
\usepackage{enumitem}
\usepackage{dsfont}
\usepackage[foot]{amsaddr}

\newtheorem{theorem}{Theorem}[section]
\newtheorem{lemma}[theorem]{Lemma}
\newtheorem{proposition}[theorem]{Proposition}
\newtheorem{corollary}[theorem]{Corollary}

\newtheorem{conjecture}[theorem]{Conjecture}
\newtheorem{definition}[theorem]{Definition}
\theoremstyle{remark}
\newtheorem{remark}[theorem]{Remark}
\newtheorem{remarks}[theorem]{Remarks}

\newtheorem{claim}[theorem]{Claim}
\newtheorem{conditions}[theorem]{Conditions}

\newcommand*{\rowfor}[4]{#1 &#2 &#3 &#4}
\newcommand{\mtrfor}[4]{\begin{pmatrix} #1 \\#2 \\ #3 \\ #4 \end{pmatrix}}

\newcommand{\id}{\mathds{1}}
\newcommand{\R}{\mathbb{R}}
\renewcommand{\P}{\mathbb{P}}
\newcommand{\E}{\mathbb{E}}
\newcommand{\Z}{\mathbb{Z}}
\newcommand{\calP}{\mathcal{P}}
\newcommand{\calZ}{\mathcal{Z}}
\newcommand{\calG}{\mathcal{G}}
\newcommand{\calL}{\mathcal{L}}
\newcommand{\eps}{\varepsilon}

\begin{document}

\title[Largest gaps in Gaussian zeros]{Poisson approximation of the largest gaps between zeros of a stationary Gaussian process} 
\author{Renjie Feng}
\email{renjie.feng@sydney.edu.au}
\address{Sydney Mathematical Research Institute, University of Sydney}
\author{Stephen Muirhead}
\email{stephen.muirhead@monash.edu}
\address{School of Mathematics, Monash University}

\date{} 
\date{\today}

\begin{abstract}
We study the largest gaps between successive zeros of a smooth stationary Gaussian process. Our main result is that, if correlations decay at least polynomially, then after suitable rescaling of the locations and sizes of the largest gaps in a growing interval, the resulting joint process converges to a Poisson point process. The main novel step in the proof is to establish an approximate splitting property, with multiplicative error, for gap events in well-separated intervals; notably we achieve this for processes with arbitrarily slow polynomial decay of correlations.
\end{abstract}

\maketitle


\section{Introduction}

Let $f$ be a smooth stationary non-zero centred Gaussian process on $\R$ (SGP), and let $\mathcal{Z} = \{ x \in \R : f(x) = 0\}$ denote its countable zero set. We are interested in the largest gaps between successive zeros. It is natural to suppose that, under suitable assumptions on the process, the collection of largest gaps obeys a \textit{Poisson approximation}: restricted to a large interval, the locations of the largest gaps are approximately uniformly distributed and their sizes are approximately independent.

\smallskip
If the SGP is replaced by an i.i.d.\ sequence of Gaussian variables, and the `zeros' are interpreted as the indices at which the sequence changes sign, then the `gaps' are distributed as runs in a sequence of coin tosses. In that context the validity of the Poisson approximation is a classical result \cite{gsw86}. 

\smallskip Much less is known in the case of dependent processes \cite{nov92}. For Markov processes, one can leverage the fact that the gaps form an i.i.d.\ sequence, thus the analysis of large gaps falls into the setting of classical extreme value theory \cite{nov88}. However SGPs are far from being Markovian in general. In a different direction, the Poisson approximation has been established for the rescaled largest gaps between eigenvalues of the Gaussian unitary ensemble by exploiting its determinantal structure \cite{fw25}. Later the fluctuations of the largest gaps between eigenvalues were shown to be universal for general Wigner matrices \cite{bou22,llm20}. Related results for other random matrix ensembles include the study of the largest gaps in the complex Ginibre ensemble \cite{lo25} and in unitary-invariant Hermitian ensembles with general confining potentials \cite{cc26}. The latter extends the results of \cite{fw25} beyond the Gaussian case; in both works, the analysis relies on the determinantal structure of the eigenvalues.

\smallskip

 In this paper we give general conditions under which the largest gaps between successive zeros of a smooth stationary Gaussian process obey the Poisson approximation. Clearly not every stationary Gaussian process satisfies the Poisson approximation -- consider the constant Gaussian process, which has no gaps. Our main result verifies the Poisson approximation for a wide class of processes whose correlations decay at least polynomially, i.e.\ that satisfy $K(x) x^\alpha \to c \geq 0$ as $x \to \infty$ for some $\alpha > 0$, where $K(x) = \E[f(0)f(x)]$ is the covariance kernel. Notably we allow for arbitrarily small $\alpha$, and so cover the regime $\alpha \in (0,1)$ of long-range correlations in which the scaling of the largest gaps is anomalous (see Corollary~\ref{c:large} for a precise statement).
 
\smallskip 
Much more is known about the analogous question for the \textit{smallest} gaps between successive zeros, which is a simpler `local' statistic. Recently Poisson approximation has been shown to hold for the smallest gaps between zeros of SGPs with \textit{arbitrary} decay of correlations \cite{fgy24}. There has also been extensive work on smallest gaps between zeros of the characteristic polynomial of random matrices \cite{ftw, FW, FLY,c25}, as well as those of random analytic functions and random polynomials \cite{FY, MY}.  

\subsection{Main results} 
To state our results we first make the Poisson approximation precise, beginning with a suitable scaling function. For an interval $I \subset \R$, let  $\mathcal{G}_I$ be the gap (or `hole') event that $I \cap \mathcal{Z}$ is empty, and define 
\begin{equation}
    \label{e:gdef}
G(r) :=  \P[\mathcal{G}_{[0,r]}] .
\end{equation}
Clearly $G$ is non-increasing, and as we show later (see Lemma \ref{l:lambda}), $G$ is convex and twice differentiable under general conditions. The \textit{scaling function} is \begin{equation}\label{thetafunction}\theta(r) := -\log \lambda(r),\,\,\text{where} \,\, \lambda(r) = - G'(r).\end{equation} 
This can be interpreted as the \textit{log-intensity} of gaps of size $\ge r$. Indeed we show (see Lemma~\ref{l:lambda}) that
\[  \lambda(r) =  \E \big[  \big| \{  z \in [0,1] \cap \mathcal{Z} : f(z + \cdot) \in \mathcal{G}_{(0,r)}  \} \big| \big]  ,  \]
which by the Kac-Rice formula can also be expressed as 
\[ \lambda(r) =  (2 \pi K(0) )^{-1/2} \E \big[  |f'(0)|  \id_{ \mathcal{G}_{(0,r)} }  | f(0) = 0 \big] . \]

\smallskip
Next we define the point process of rescaled gaps.  Let $(z_i)_{i \in \Z} = \calZ$ be an arbitrary increasing ordering of the zeros. For $R > 1$ define the point process 
\begin{equation}
\label{e:psi}  \Psi_R := \sum_{ i : z_i \in [0,R]} \delta_{ ( z_i / R,   \,\, \theta (z_{i+1}-z_i) -  \log R ) }  
\end{equation}
on $[0,1] \times \R$, where $\delta$ denotes a Dirac delta mass; this encodes the location and size of the (rescaled) gaps in the interval $[0,R]$ (by convention the `location' of a gap is its left endpoint). Define the \textit{largest gap} in the interval $[0,R]$ and its \textit{location} to be respectively 
 \begin{equation}
 \label{e:def}
  \mathcal{L}_R :=  \max \{ z_{i+1} - z_i :  0 \le z_i \le R \}  \quad \text{and} \quad \calZ_R:= \{ 0 \le z_i \le R: z_{i+1} - z_i = \calL_R \}. 
  \end{equation}
For concreteness, if $\mathcal{Z} \cap [0,R]$ is empty then we set $\calL_R = \calZ_R = 0$, and if $\calZ_R$ is multiply defined (which may occur for degenerate processes) then we fix $\calZ_R$ to take its smallest value. Let $\R \cup \infty$ denote the compactification of $\R$ at $\infty$.

\begin{definition}
\label{d:pa}
We say that the `largest gaps obey a Poisson approximation' if $\Psi_R$ converges vaguely, as $R\to\infty$, to a Poisson point process on $[0,1] \times (\R \cup \infty)$ with intensity $dx \otimes e^{-y} dy$. 
\end{definition}

\noindent In particular this implies joint convergence in law of the location and size of the largest gap: \begin{equation}
\label{e:law}
\Big(\frac{\calZ_R}{R} ,  \theta(\calL_R)  - \log R \Big) \Longrightarrow   (  \mathcal{U} ,\mathcal{X} ) \, , \quad  \text{as } R \to \infty,
\end{equation}
where $(\mathcal{U},\mathcal{X})$ denotes a pair of independent standard uniform and Gumbel random variables. 

\smallskip
As mentioned, we verify the Poisson approximation for a wide class of SGPs whose correlations decay at least polynomially. We say that $f$ satisfies  \textit{polynomial decay with exponent $\alpha > 0$} if 
\begin{equation*}
\text{($\alpha$-PD) \qquad  $K(x) \sim x^{-\alpha}$ as $x \to \infty$,}
\end{equation*}
and \textit{super polynomial decay} if 
\begin{equation*}
\text{($\infty$-PD) \qquad for every $\alpha > 0$, $K(x) x^\alpha \to 0$ as $x \to \infty$.}
\end{equation*}

The \textit{spectral measure} of $f$ is the positive measure $\mu$ such that $K = \mathcal{F}[\mu]$, where $\mathcal{F}$ denotes the Fourier transform. We say that $f$ \textit{has a spectral density} if $\mu$ has a density.  

\smallskip
Our main result is the following:

\begin{theorem}
\label{t:main}
Let $f$ be a $C^2$-smooth SGP with a spectral density. Suppose either:
\begin{enumerate}
\item ($\infty$-PD) holds, and moreover $\int K(x) \,dx \neq 0$, or
\item ($\alpha$-PD) holds for some $\alpha \neq 1$, and moreover $K > 0$ and $\sup_x |K'(x)| / K(x) < \infty$.
\end{enumerate}
Then the largest gaps obey a Poisson approximation in the sense of Definition \ref{d:pa}.
\end{theorem}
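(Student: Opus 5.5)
To prove Theorem~\ref{t:main} we verify Kallenberg's criterion for vague convergence to a simple Poisson process. For finite unions $B$ of rectangles $[a,b]\times(y,\infty]$ in $[0,1]\times(\R\cup\infty)$ it suffices to show two things: $\E[\Psi_R(B)]\to \int_B dx\, e^{-y}dy$, and $\P[\Psi_R(B)=0]\to \exp(-\int_B dx\,e^{-y}dy)$. Write $r_R(y):=\theta^{-1}(y+\log R)$. The event that $\Psi_R$ places a point in $[a,b]\times(y,\infty]$ is then the event that some zero $z\in[aR,bR]$ is followed by a gap of length $\ge r_R(y)$.

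\textbf{First moments.} By stationarity and the identity for $\lambda$ from Lemma~\ref{l:lambda},
\[ \E[\Psi_R([a,b]\times(y,\infty])] = (b-a)R\,\lambda(r_R(y)) = (b-a)e^{-y}. \]
This is exact up to the boundary convention, and it uses only the definition of $\theta$ together with continuity and monotonicity of $\lambda$ (also from Lemma~\ref{l:lambda}). So the first-moment condition is essentially immediate. The real input needed is that $r_R\to\infty$, i.e.\ $\lambda(r)\to0$. We also need the growth of $r_R$ relative to $R$: we expect $r_R \asymp \log R$ in case (1) and $r_R$ polylogarithmic or polynomial in case (2). This is the content of the asymptotics behind Corollary~\ref{c:large}, and what matters is $r_R = o(R^{\delta})$ for small $\delta$.

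\textbf{Void probabilities via blocking.} Cut $[0,R]$ into blocks of length $L=R^{\beta}$, separated by buffers of length $R^{\beta'}$ with $\beta'<\beta<1$, and write the void event as an intersection of block events. Each block event, ``no gap of length $\ge r$ starting in block $J$'', is a function of $f$ on $J$ enlarged by $r$. We then need two estimates.
\begin{enumerate}
\item A \emph{local anti-clustering} (second moment) estimate. Within a block, the probability of two distinct long gaps with starting points within distance $\ll L$ is $o(L\lambda(r))$, so that
\[ \P[\text{block event fails}] = L\lambda(r)(1+o(1)). \]
The term with two long gaps that are genuinely distinct but close is bounded by $\P[\mathcal{G}_I\cap\mathcal{G}_{I'}]$ for disjoint intervals near each other. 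Here we use that $G$ is log-convex-like: Lemma~\ref{l:lambda} gives convexity, and $G(2r)\ll G(r)$ together with $\lambda\approx G$ up to polynomial factors. Together with a Kac--Rice bound on pairs of zeros, this shows such pairs are negligible. Buffer contributions are $O(R^{\beta'-1})$ and hence negligible.
\item A \emph{splitting} estimate. For the events $E_1,\dots,E_m$ on well-separated blocks,
\[ \P\Bigl[\bigcap_j E_j\Bigr] = (1+o(1))\prod_j \P[E_j]. \]
Since each $\P[E_j^c]$ is only of order $R^{\beta-1}$ and there are $R^{1-\beta}$ blocks, an additive error in the splitting is useless. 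We need a multiplicative error, or at least an additive error $o(R^{\beta-1})$ per block.
\end{enumerate}

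\textbf{Main obstacle: multiplicative splitting.} The hardest step is (2), especially under ($\alpha$-PD) with small $\alpha$. There the covariance between blocks, $R^{-\alpha\beta'}$, is not small compared with the rarity of the events. My plan is to use the spectral density to decompose $f=f_1+f_2$ on each pair of distant regions. Concretely, by Gaussian interpolation or a Cameron--Martin shift, we would write the field on block $J$ as an independent field plus a smooth random perturbation $h$ of size $\eps=R^{-\alpha\beta'/2}$ coming from the other blocks.

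In case (2), $K>0$ together with $|K'|/K$ bounded lets us choose $h$ to be nearly constant over scale $r$ (its derivative is $O(\eps)$). A nearly constant shift changes a gap event only through level-crossing sensitivity, and comparing $\mathcal{G}$ at level $0$ with level $\pm\eps$ should cost a factor $1+O(\eps r^{C})$. This is small since $r$ is polylogarithmic. The condition $\alpha\ne1$ presumably enters because the asymptotics of $\theta$ differ between the two regimes and make this comparison uniform. In case (1), correlations decay faster than any polynomial, so we can take buffers $R^{\beta'}$ with tiny $\beta'$. The perturbation is then $O(R^{-100})$, which beats the factor $e^{-Cr}$ polynomially as long as $\int K\ne0$ guarantees $r_R\asymp\log R$ rather than something larger.

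Applying this splitting iteratively over the blocks, or through a Chen--Stein style bound, gives
\[ \P[\Psi_R(B)=0] = \prod_j \bigl(1-L\lambda(r)(1+o(1))\bigr) \to \exp\Bigl(-\int_B dx\, e^{-y}dy\Bigr), \]
which completes the proof.
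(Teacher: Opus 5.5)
\textbf{Gap 1: the splitting step.} Your framework (Kallenberg's criterion via first moments and void probabilities over blocks) is a legitimate alternative to the paper's factorial-moment method, and the identity $\E[\Psi_R([a,b]\times(y,\infty])]=(b-a)e^{-y}$ is correct. However, the two estimates that carry the proof are not supplied.

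The main gap is splitting. Small cross-covariance between blocks does not let you write $f|_J$ as an independent field plus a small perturbation $h$ coming from the other blocks. Conditioning on a continuum of values exploits low-variance linear functionals, so the maximal correlation coefficient between two blocks need not be small. For example, $K(x)=(1+x^2)^{-\alpha/2}$ satisfies hypothesis (2) and $K(x)=e^{-x^2/2}$ satisfies hypothesis (1), yet both have real-analytic sample paths. Hence $f$ on any block determines $f$ everywhere, the conditional law of $f|_J$ given the other blocks is a point mass, and there is no small $h$ for a level-shift comparison to act on. This is also why the sprinkled-decoupling argument of Proposition~\ref{p:clus2} must discretise and add independent noise, and only works for $\alpha>1$.

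A genuine finite-range decomposition (e.g.\ truncating a white-noise representation of $f$) would avoid this objection. You would still need the multiplicative comparison $1+O(\eps r^C)$ for rare, non-monotone block events, which you only assert. Moreover, iterating over $R^{1-\beta}$ blocks (or Chen--Stein) requires decoupling one rare block event from a non-monotone event on all of $[0,R]$. For $\alpha<1$ the summed cross-covariance $\int_{\mathrm{rest}}|K|\approx R^{1-\alpha}$ diverges, so bounds obtained by summing pointwise covariances cannot close that step.

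The paper's moment method needs much less: splitting of only $k$ gap events, $k$ fixed, which reduce to intersections of monotone persistence events. The interpolation formula (Proposition~\ref{p:inter}) expresses the error through $\iint K(x-y)\,p_t(x,y)$. The key new input is the self-bounding de-pinning estimate $p_t(x,y)\le c\,\P[A_t](\log(1/\P[A_t]))^3$ (Proposition~\ref{p:depin}). It is proved by a Cameron--Martin shift along the RKHS function of Claim~\ref{c:h}, which is where $K>0$ and $\sup|K'|/K<\infty$ are used. Integrating $|u'|\le\gamma u^3$ for $u=-\log\P[A_t]$ then gives the multiplicative bound of Proposition~\ref{p:ssplit}. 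In case (1) the crude bound $p_t\le C$ suffices, since $\bar K(R^\delta)$ beats $G(r)^{-k}=R^{k+o(1)}$.

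\textbf{Gap 2: local anti-clustering.} Convexity of $G$ and $G(2r)\ll G(r)$ say nothing about $\P[\mathcal{G}_I\cap\mathcal{G}_{I'}]$ for disjoint nearby intervals of length $r$. This is what two distinct long gaps produce, for instance $f>0$ on $I$ and $f<0$ on $I'$ with a zero in between. The paper's bound $\phi_r\le G(r)^{1+\eta+o(1)}$ (Corollary~\ref{c:clus}) needs different tools in each case:
\begin{itemize}
\item in case (2), Slepian comparisons using $K>0$ and \eqref{e:c1}, which give $\phi_r\le 2G((2-\eps)r)$ (Proposition~\ref{p:clus1});
\item in case (1), discretisation, sprinkling and Proposition~\ref{p:sdi}, combined with persistence asymptotics at level $-\eps$ (Proposition~\ref{p:clus2}).
\end{itemize}
Furthermore, because your blocks have length $R^\beta$, the second factorial moment of the number of long gaps in a block must be $o(R^{\beta-1})$. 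That forces pair probabilities to be $o(R^{-1-\beta})$ at all separations up to $R^\beta$. At mesoscopic separations this again requires multiplicative decoupling (Propositions~\ref{p:ssplit} and \ref{p:cssplit}), since an additive error of order $r^2\bar K(s)$ is far too large when $\alpha$ is small.
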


We omit the case $\alpha = 1$ for technical reasons, although we expect that the Poisson approximation also holds in that case. In fact we believe that the Poisson approximation holds under much milder conditions (see Conjecture~\ref{c}).

\smallskip
To extract from Theorem \ref{t:main} quantitative information about the largest gap $\mathcal{L}_R$ requires some knowledge of the function $\theta(r) = - \log (- G'(r))$. Recently there has been much progress in determining the asymptotics of the related quantity $-\log G(r)$ \cite{ffm25,ffm26}, which turn out to depend heavily on the rate of decay of $K$.  By comparing $G$ and its derivative, we deduce from \cite{ffm25,ffm26} the following asymptotics:

\begin{proposition}
\label{p:per2}
Let $f$ be a $C^2$-smooth SGP with a spectral density.
\begin{enumerate}
\item If either ($\infty$-PD) or ($\alpha$-PD) is satisfied with $\alpha > 1$, and if also $\int K(x) \,dx \neq 0$, then there exists $\zeta > 0$ such that
\[ \theta(r) \sim \zeta r  \ , \quad \text{as } r \to \infty . \]
\item If ($\alpha$-PD) is satisfied with $\alpha \in (0,1)$, then there exists $\zeta > 0$ such that
\[ \theta(r) \sim \zeta r^\alpha (\log r)  \ , \quad \text{as } r \to \infty . \]
 In this case $\zeta$ can be written explicitly as
 \[ \zeta = \frac{K(0) \sqrt{\pi}  (1-\alpha)}{\Gamma((1-\alpha)/2) \Gamma(1+\alpha/2) } =  \frac{2 K(0)  (1-\alpha) \sin(\alpha \pi /2) }{ \sqrt{\pi} \alpha} . \]
 \end{enumerate}
 \end{proposition}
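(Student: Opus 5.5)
The approach is to transfer the known asymptotics of $-\log G(r)$ from \cite{ffm25,ffm26} to $\theta(r) = -\log(-G'(r))$. We do this by sandwiching $\lambda(r) = -G'(r)$ between expressions built from $G$ alone. Concretely, \cite{ffm25,ffm26} give, under the hypotheses of (1), $-\log G(r) \sim \zeta r$ (a persistence exponent $\zeta>0$, which exists and is positive because $\int K \neq 0$ and $K$ decays fast enough). Under the hypotheses of (2), they give $-\log G(r) \sim \zeta r^\alpha \log r$ with the stated explicit $\zeta$. The two forms of $\zeta$ in (2) agree by the reflection and duplication formulas for $\Gamma$, a routine check. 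It therefore suffices to show that $\log \lambda(r) = \log G(r) + o(\log(1/G(r)))$.

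\emph{Upper bound on $\lambda$.} By Lemma~\ref{l:lambda}, $G$ is convex and non-increasing, so $\lambda$ is non-increasing. Hence
\[ G(r) \ge G(r) - G(2r) = \int_r^{2r} \lambda(s)\, ds \ge r \lambda(2r). \]
This gives $\lambda(2r) \le G(r)/r$, and so
\[ \theta(2r) \ge -\log G(r) + \log r. \]
This is not sharp as it stands, since it compares $2r$ with $r$. To repair this, replace the interval $[r,2r]$ by $[r, r+\delta r]$ for small fixed $\delta>0$. That gives $\theta((1+\delta) r) \ge -\log G(r) + \log(\delta r)$. In both regimes the asymptotic is regularly varying, so dividing by the asymptotic, letting $r\to\infty$ and then $\delta\to0$ yields $\liminf \theta(r)/(-\log G(r)) \ge 1$.

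\emph{Lower bound on $\lambda$.} Convexity again gives
\[ G(r) = \int_r^\infty \lambda(s)\,ds. \]
To bound $\lambda(r)$ from below in terms of $G$, I would use $\lambda(r) \ge (G(r) - G(r+\delta r))/(\delta r)$, valid since $\lambda$ is non-increasing. It then suffices to know that $G(r+\delta r) \le G(r)/2$ for large $r$. This follows from the asymptotics: $\log G(r) - \log G((1+\delta)r) \sim c_\delta \cdot(-\log G(r)) \to \infty$ with $c_\delta>0$. We obtain $\lambda(r) \ge G(r)/(2\delta r)$, hence $\theta(r) \le -\log G(r) + \log(2\delta r)$. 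Since $\log r = o(r)$ and $\log r = o(r^\alpha\log r)$, the error is negligible and $\limsup \theta(r)/(-\log G(r)) \le 1$.

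The main obstacle is not the comparison, which is elementary given convexity, but ensuring that the results of \cite{ffm25,ffm26} apply under exactly the stated hypotheses. For (1) this means existence and positivity of the persistence exponent when $\int K\ne0$ and $K$ decays faster than $x^{-1}$. For (2) it means the $r^\alpha\log r$ law with that constant. It also requires Lemma~\ref{l:lambda}, which gives convexity and differentiability of $G$ for $C^2$ processes with a spectral density.
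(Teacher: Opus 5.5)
Your argument for $\theta(r)\sim-\log G(r)$ is correct, and half of it differs from the paper's. The bound $\limsup_r \theta(r)/(-\log G(r))\le 1$ is the paper's argument. There the convexity (tangent-line) inequality gives $-G'(r)\ge (G(r)-G(2r))/r$, and the asymptotics give $G(2r)=o(G(r))$. You use $(1+\delta)r$ in place of $2r$, which makes no difference.

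For the reverse bound, the paper (in Proposition~\ref{p:gpp}) does not use monotonicity of $\lambda$ at all. It proceeds in four steps:
\begin{enumerate}
\item it writes $-G'(r)$ by the Kac--Rice formula of Lemma~\ref{l:lambda};
\item it removes $|f'(0)|$ by H\"older's inequality;
\item it dominates $\P[\mathcal{G}_{(0,r)}\mid f(0)=0]$ by $c_\eps\P[f(x)>-\eps : x\in[0,r]]$ via Gaussian regression;
\item it invokes the stability of the persistence asymptotics under small level shifts from Proposition~\ref{p:per}.
\end{enumerate}
Your inequality $\lambda((1+\delta)r)\le G(r)/(\delta r)$, combined with regular variation of $r$ and $r^\alpha\log r$ and then $\delta\to0$, replaces all of this. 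It needs only the $\eps=0$ asymptotics and is genuinely simpler.

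What the paper's route buys is a bound at the same scale $r$, with no rescaling. More importantly, it is a method that also yields the pointwise estimate $\log G''(r)\le\log G(r)(1+o(1))$ in Proposition~\ref{p:gpp}, which is used later in Claim~\ref{c:gap}. Since $G''$ is not monotone, your trick cannot produce that bound.

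One claim in your proposal is false: the two displayed expressions for $\zeta$ in item (2) are not equal, so this is not a routine check. Since $\Gamma(1+\alpha/2)=(\alpha/2)\Gamma(\alpha/2)$ and $\Gamma(\alpha/2)\Gamma(1-\alpha/2)=\pi/\sin(\alpha\pi/2)$, the second expression equals $K(0)\sqrt{\pi}(1-\alpha)/(\Gamma(1-\alpha/2)\Gamma(1+\alpha/2))$. That is the first expression with $\Gamma((1-\alpha)/2)$ replaced by $\Gamma(1-\alpha/2)$.

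As $\Gamma$ is strictly decreasing on $(0,1)$ and $(1-\alpha)/2<1-\alpha/2$, the first expression is strictly smaller for every $\alpha\in(0,1)$. For example, as $\alpha\to0$ they tend to $K(0)$ and $\sqrt{\pi}K(0)$ respectively. This is a misprint in the statement, inherited from Proposition~\ref{p:per}. The paper's proof makes no such verification and simply takes $\zeta$ from \cite{ffm26}; you should do the same rather than assert the identity.
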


Note that the decay of $\theta$ in the regime $\alpha \in (0,1)$ is anomalous. Combining Theorem \ref{t:main} with Proposition \ref{p:per2} yields the first order scaling of the largest gap:

\begin{corollary}
\label{c:large}
Let $f$ be a $C^2$-smooth SGP with a spectral density. Then with $\zeta > 0$ as in Proposition \ref{p:per2}:
\begin{enumerate}
\item If ($\infty$-PD) holds, and moreover $\int K(x) \, dx \neq 0$, then as $R \to \infty$
\[ \frac{ \calL_R  }{\log R}  \to \frac{1}{\zeta} \qquad \text{in probability.} \]
\item If ($\alpha$-PD) holds for some $\alpha \neq 1$, $K > 0$, and $\sup_x |K'(x)| / K(x) < \infty$, then as $R \to \infty$:
\begin{enumerate}
\item If $\alpha > 1$,
\[ \frac{ \calL_R  }{\log R}  \to \frac{1}{\zeta} \qquad \text{in probability.} \]
\item If $\alpha \in (0,1)$,
\[ \frac{ \calL_R (\log \log R)^{1/\alpha}  }{(\log R)^{1/\alpha }} \to \Big( \frac{\alpha}{\zeta} \Big)^{1/\alpha}  \qquad \text{in probability.} \]
\end{enumerate}
\end{enumerate}
\end{corollary}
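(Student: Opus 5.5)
The corollary follows from \eqref{e:law} (a consequence of Theorem~\ref{t:main}) together with the asymptotics of $\theta$ from Proposition~\ref{p:per2}. By \eqref{e:law}, $X_R := \theta(\calL_R) - \log R$ converges in law to a Gumbel variable, so it is tight. Hence for every $\eps>0$ there is $M$ with $\P[|\theta(\calL_R) - \log R| \le M] \ge 1-\eps$ for all large $R$. In particular $\theta(\calL_R)/\log R \to 1$ in probability. Since $\theta(r)\to\infty$ as $r\to\infty$ by Proposition~\ref{p:per2}, and $\theta$ is non-decreasing ($\lambda = -G'$ is non-increasing because $G$ is convex, Lemma~\ref{l:lambda}), it also follows that $\calL_R \to \infty$ in probability. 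This allows us to replace $\theta(\calL_R)$ by its asymptotic form. Precisely, on the event $\{|X_R|\le M\}\cap\{\calL_R \ge r_0\}$, whose probability tends to at least $1-\eps$, we have $\theta(\calL_R) = (1+o(1))\,g(\calL_R)$, where $g$ is the relevant asymptotic profile, with $o(1)$ uniform as $r_0\to\infty$.

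In case (1) and case (2a), Proposition~\ref{p:per2}(1) gives $\theta(r) \sim \zeta r$. The hypotheses of the corollary match those of Theorem~\ref{t:main}: in case (2a), ($\alpha$-PD) with $\alpha>1$ gives integrability of $K$, and $K>0$ gives $\int K \neq 0$. Therefore
\[ \zeta \calL_R (1+o(1)) = \log R + O(1), \]
and so $\calL_R/\log R \to 1/\zeta$ in probability.

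In case (2b), Proposition~\ref{p:per2}(2) gives $\theta(r)\sim \zeta r^\alpha \log r$, hence $\zeta L^\alpha \log L = (1+o_P(1)) \log R$ with $L=\calL_R$. Inverting this relation is the only step requiring care. Write $L^\alpha \log L = y$ with $y = (1+o(1))\log R/\zeta$. Taking logarithms gives $\alpha\log L + \log\log L = \log y$, so $\log L = \alpha^{-1}\log y\,(1+o(1))$. Consequently $\log L \sim \alpha^{-1}\log\log R$, and then
\[ L^\alpha = \frac{y}{\log L} \sim \frac{\alpha \log R}{\zeta \log\log R}. \]
Raising to the power $1/\alpha$ gives $\calL_R (\log\log R)^{1/\alpha}/(\log R)^{1/\alpha} \to (\alpha/\zeta)^{1/\alpha}$ in probability. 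The inversion is legitimate because $r\mapsto r^\alpha\log r$ is increasing for large $r$, and the $(1+o(1))$ factors pass through the inversion with the same relative errors. To make this rigorous, fix $\delta>0$ and show that the events $L \ge (1+\delta)c_R$ and $L\le(1-\delta)c_R$, where $c_R$ is the claimed scale, force $\theta(L) - \log R \to \pm\infty$. This contradicts tightness.

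No step is a real obstacle. The only points to check are the monotonicity of $\theta$, which makes the quantile comparison valid, and that $\calL_R\to\infty$, which justifies using the asymptotics of $\theta$. Both follow from Lemma~\ref{l:lambda} and Proposition~\ref{p:per2}.
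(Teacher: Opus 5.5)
Your proposal is correct and takes essentially the paper's route: the paper gives no separate proof beyond combining \eqref{e:law} from Theorem~\ref{t:main} with the asymptotics of $\theta$ in Proposition~\ref{p:per2}, and your argument makes that combination rigorous via tightness of $\theta(\calL_R)-\log R$, monotonicity of $\theta$ (from convexity of $G$), and evaluation of $\theta$ at the deterministic points $(1\pm\delta)c_R$. One small point of attribution: in case (2a), your observation that $K>0$ forces $\int K\,dx\neq 0$ is needed to invoke Proposition~\ref{p:per2}(1), which requires $\int K\,dx\neq 0$, rather than Theorem~\ref{t:main}, whose hypotheses in this case are exactly those of the corollary.
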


It is natural to expect that the decreasing function $|G'| \to 0$ defines a distribution that is in the domain of attraction of the Gumbel law in the sense of extreme value theory. In particular under ($\infty$-PD) or ($\alpha$-PD) with $\alpha >1$, one might expect that
\begin{equation}
\label{e:refas}
 \theta(r ) = \zeta r + o(1)   \, \qquad \text{as } r \to \infty.
\end{equation}
If this were true, we could deduce from Theorem \ref{t:main} that 
\begin{equation}
\label{e:ev}
  b_R (\calL_R -  a_R) \Longrightarrow \mathcal{X} \qquad \text{in law,} 
  \end{equation}
where $a_R = (1/\zeta) \log R$, $b_R = \zeta$, and $\mathcal{X}$ has a Gumbel distribution. Similarly, under ($\alpha$-PD) with $\alpha < 1$, one might expect that
\begin{equation}
\label{e:refas2}
 \theta(r ) = \zeta r^\alpha (\log r) + o(1)   \, \qquad \text{as } r \to \infty.
\end{equation}
If this were true, we could deduce from Theorem \ref{t:main} that \eqref{e:ev} holds with
\[ a_R =  \Big( \frac{\alpha}{\zeta} \Big)^{1/\alpha}   \frac{ (\log R)^{1/\alpha } } { (\log \log R)^{1/\alpha}  } \qquad \text{and} \qquad b_R =  \alpha^{1-1/\alpha}\zeta^{1/\alpha}(\log \log R)^{1/\alpha}  (\log R)^{1-1/\alpha} .   \]
However establishing \eqref{e:refas} or \eqref{e:refas2} seems very challenging, and to our knowledge even the analogous statements for $-\log G(r)$ are not known for any smooth SGP.

\subsection{Sketch of proof and outline of the paper}
Heuristically the Poisson approximation of the largest gaps follows from two basic ingredients: (i) the `splitting' of gap probabilities beyond mesoscopic scales, and (ii) the absence of clustering of large gaps.

\smallskip To be slightly more precise, let $L_R$ denote the order of the largest gap (e.g.\ $L_R= \frac 1\zeta\log R$ under ($\infty$-PD) as in Corollary \ref{c:large}), 
and let $s_R$ be some mesoscopic scale satisfying $L_R \ll s_R \ll R$; we will take $s_R = R^{\delta}$ for some carefully chosen $\delta > 0$.

\smallskip In Section \ref{s:split} we establish the `splitting' property: gaps of order $L_R$ at distance $\gg s_R$ appear approximately independently with multiplicative error $\ll 1$. Proving such a bound is relatively simple under ($\infty$-PD), but much harder under ($\alpha$-PD), especially if $\alpha$ is small. This error estimate
constitutes one of the main technical contributions of the paper.

\smallskip
In Section \ref{s:clus} we establish the absence of clustering: the probability of gaps in two intervals of order $L_R$ is at most $R^{-1-\eta}$ for some $\eta > 0$. We deduce this by combining the aforementioned results on the gap probability \cite{ffm25,ffm26} with some general comparison and decoupling techniques for Gaussian processes.

\smallskip In Section~\ref{s:pois}, we show how these ingredients combine to establish Theorem~\ref{t:main}. Our approach is based on the moment method in \cite{fw25}, but with several innovations:
\begin{itemize}
    \item Our strategy and techniques are very general, applying to a broad class of point processes (see Section~\ref{ss:gen}), whereas the proof in \cite{fw25} relies crucially on the special determinantal structure of eigenvalues. 
    \item In \cite{fw25} a Poisson approximation was only established for the rescaled largest gaps, whereas here we prove the joint convergence of the locations and size of the largest gaps, providing a more complete description of the limiting behaviour. 
\end{itemize}

\smallskip Some preliminary results are stated in Section \ref{s:prelim}, including general properties of SGPs, as well as the asymptotics of the scaling function in Proposition \ref{p:per2}.

\smallskip
Throughout the proof we make frequently use of the observation that $\mathcal{G}_I$ is the disjoint union of the \textit{persistence events} $\{f(x) > 0 : x \in I\}$ and $\{f(x) < 0 : x \in I\}$. These events have the advantage of being \textit{monotone} with respect to the process, which gives access to many tools specific to monotone events.


\subsection{Remarks on the decay condition}
We believe that the Poisson approximation is valid under much milder conditions than we establish, but not in full generality. In particular we believe that poly-logarithmic correlation decay does \textit{not} suffice:

\begin{conjecture}
\label{c}
There exists a $\gamma_0 > 0$ such that the Poisson approximation is valid for every smooth SGP satisfying $K(r) (\log r)^{\gamma'} \to 0$ as $r \to \infty$ for some $\gamma' > \gamma_0$, but not necessarily if $K(r) (\log r)^{\gamma'} \to \infty$ for some $\gamma' < \gamma_0$.
\end{conjecture}

Based on rough heuristics, we have reason to believe that the correct value of $\gamma_0$ is the golden ratio $(1+\sqrt{5})/2$. This would be rather surprising, especially when contrasted with the fact that the Poisson approximation is valid for \textit{high exceedences} of SGPs if $K(x) (\log x)^{\gamma} \to 0$ for $\gamma = 1$, but not necessarily otherwise \cite{llr83}, and is valid for \textit{smallest gaps} between SGP zeros with \textit{arbitrary} decay of correlations \cite{fgy24}.
 
 \smallskip
 Let us sketch these heuristics. As described in \cite{ffm26}, for a strongly correlated SGP with $K(x) \approx (\log x)^{-\gamma}$, a large gap event $\mathcal{G}_{[0,r]}$ is overwhelmingly due to the SGP experiencing `entropic repulsion' to a level $\ell_r  \gg 1$ which is the solution to the equation $\ell_r^2 / 2 = \log r $, i.e.\ $\ell_r = \sqrt{2 \log r}$, and the gap probability is then roughly $G(r) \approx e^{- (\ell_r^2/2) (\log r)^\gamma} = e^{- (\log r)^{\gamma + 1}}$.  If the Poisson approximation were valid, and supposing that $-G' \approx G$,  the relevant scaling function $\theta = -\log (-G')$ would therefore be $\theta(r) \approx (\log r)^{\gamma + 1}$. This would imply that the largest gap $\mathcal{L}_R$ in $[0,R]$ is approximately the solution to 
 \begin{equation}
     \label{e:heu1}
 (\log \mathcal{L}_R)^{\gamma+1} \approx \log R + \mathcal{X} 
 \end{equation}
 with $\mathcal{X}$ representing the (Gumbel) fluctuations.
 
 \smallskip
 Now assume that the SGP is not centred but instead has mean $\mu \approx 0$. Then the above heuristics must be modified: now $\ell_r$ is the solution to the equation $(\ell_r-\mu)^2 / 2 = \log r $ so that  $\ell_r^2 \approx 2 \log r + \mu \ell_r \approx 2 \log r + \mu \sqrt{2 \log r}$, and the gap probability is roughly $G(r) \approx e^{- (\ell_r^2/2) (\log r)^\gamma} \approx e^{- (\log r)^{\gamma + 1} - (\mu / \sqrt{2}) (\log r)^{\gamma + 1/2} }$. The largest gap  $\mathcal{L}_R$ would now be the solution to \begin{equation}
     \label{e:heu2}
  (\log \mathcal{L}_R)^{\gamma + 1} - (\mu / \sqrt{2}) (\log \mathcal{L}_R)^{\gamma + 1/2}  \approx \log R + \mathcal{X}. 
   \end{equation}
   If $\mu$ was in fact a \textit{random} constant, comparing \eqref{e:heu1} and \eqref{e:heu2} suggests that the fluctuations in $\mu$ would influence those of $\mathcal{L}_R$ if and only if
  \begin{equation}
      \label{e:heu3}
    1 \lesssim \mu  (\log \mathcal{L}_R)^{\gamma+1/2}  \approx \mu (\log R)^{(\gamma+1/2)/(\gamma+1)}  .
    \end{equation}

 \smallskip
 Finally observe that a SGP with $K(x) \approx (\log x)^{-\gamma}$ has `global fluctuations' on $[0,R]$ of order $(\log R)^{-\gamma/2}$, whose effect can be approximated by a random level shift of this order. In view of \eqref{e:heu3}, this suggests that the correlations interfere with the Poisson approximation if and only if 
  \[1 \lesssim (\log R)^{-\gamma/2} (\log R)^{(\gamma+1/2)/(\gamma+1)}  . \]
  Solving $-\gamma/2 + (\gamma+1/2)/(\gamma+1) = 0$, we find that the critical value of $\gamma$ is the golden ratio.
  
\subsection{Acknowledgments}
S.M.\ is supported by the Australian Research Council Future Fellowship FT240100396. The authors thank Michael McAuley for helpful discussions.

\medskip
\section{Preliminaries}
\label{s:prelim}

In this section we establish some preliminary properties of Gaussian processes, the gap probability $G(r) = \P[\mathcal{G}_{[0,r]}]$, and the probability of persistence events.

\subsection{Basic properties of Gaussian processes}

We collect some properties of smooth SGPs. The results are standard, but we give details of the proof for completeness.  

\begin{lemma}[Non-degeneracy]
\label{l:nondegen}
Let $f$ be a $C^1$-smooth SGP with a spectral density. Then for every finite collection of disjoint points $(x_i)_{i \le k}$, the Gaussian vector
\[ \big( f(x_1), \ldots , f(x_k), f'(x_1), \ldots,  f'(x_k) \big) \]
is non-degenerate.
\end{lemma}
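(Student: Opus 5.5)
We show the vector is non-degenerate by checking that its covariance matrix is positive definite, i.e.\ that no non-trivial linear combination has zero variance.

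Fix real coefficients $(a_i)_{i\le k}$ and $(b_i)_{i \le k}$, not all zero, and set
\[ Y := \sum_{i} \big( a_i f(x_i) + b_i f'(x_i) \big). \]
Since $f$ is $C^1$ and $K = \mathcal{F}[\mu]$ with $\mu$ having a density $\rho \ge 0$, the process $f'$ has covariance given by differentiating $K$. The spectral representation then gives
\[ \mathrm{Var}(Y) = \int_\R \Big| \sum_i (a_i + i b_i \xi)\, e^{i \xi x_i} \Big|^2 \rho(\xi)\, d\xi , \]
up to the normalisation convention of $\mathcal{F}$. Concretely, $\E[f(x)f(y)] = \int e^{i\xi(x-y)}\rho(\xi)\,d\xi$, and each derivative in $x$ brings a factor $i\xi$. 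The $C^1$ assumption ensures $\int \xi^2 \rho < \infty$, so this expression is finite and the differentiation under the integral is justified.

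The integrand vanishes wherever $P(\xi) := \sum_i (a_i + i b_i \xi) e^{i\xi x_i} = 0$. The key step is to show that $P$ is not identically zero, and hence that its zero set is Lebesgue-null. The function $P$ extends to an entire function of $\xi \in \mathbb{C}$, so if it is not identically zero its real zeros are isolated, hence countable and of measure zero.

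To see $P \not\equiv 0$, note it is an exponential polynomial with distinct frequencies $x_i$ and polynomial coefficients $a_i + i b_i \xi$. Such functions are linearly independent: the functions $\xi^m e^{i\xi x_j}$ are linearly independent over $\mathbb{C}$ for distinct $x_j$. One way to prove this is by induction on $k$. Multiply through by $e^{-i\xi x_k}$ and differentiate twice to kill the top term, which preserves the distinctness of the remaining frequencies $x_j - x_k \ne 0$ and the degree structure. Alternatively, one can view $P$ as the Fourier transform of the compactly supported distribution $\sum_i (a_i \delta_{x_i} - b_i \delta'_{x_i})$, which is non-zero, and use injectivity of the Fourier transform on distributions. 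Either way $P \not\equiv 0$.

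It remains to ensure that $\rho$ is positive on a set of positive measure. This holds because $f$ is non-zero, so $\int \rho > 0$. Since $|P|^2 > 0$ almost everywhere, it follows that $\int |P|^2 \rho > 0$. Hence $\mathrm{Var}(Y) > 0$, the covariance matrix is positive definite, and the vector is non-degenerate.

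The only step requiring care is the linear independence of $\{e^{i\xi x_j}, \xi e^{i\xi x_j}\}$, and I would take the distributional Fourier argument as the cleanest route.
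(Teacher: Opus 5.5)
Your proof is correct and follows the paper's argument: you write the variance of a non-trivial linear combination as $\int |P|^2\, d\mu$, observe that $P$ has only countably many zeros, and conclude using that $\mu$ has a density. You also justify that $P \not\equiv 0$, via linear independence of exponential polynomials or injectivity of the Fourier transform on distributions, which the paper uses without comment.
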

\begin{proof}
The vector is degenerate if and only if there exist $\lambda = (\lambda_i) \in \R^{2k} \setminus \{0\}$ such that $\sum_{j \le k} \lambda_j f(x_j) + \sum_{j \le k} \lambda_{j+k} f'(x_{j})$ has zero variance; in terms of the spectral measure $\mu$ this condition is
\[  \int  \Big| \sum_{j \le k} \lambda_j e^{  i x_j s } +  \sum_{j \le k} \lambda_{j+k}  (i  s)   e^{i x_{j} s } \Big|^2  \, d \mu(s)  = 0 . \]
Suppose such a $\lambda$ exists. Since $t(s) := \sum_{j \le k} \lambda_j e^{i x_j s } +  \sum_{ j \le k} \lambda_{j+k}  i  s   e^{i x_{j} s }$ has a countable set of zeros, $\mu$ must be supported on a countable set, which is inconsistent with having a density.
\end{proof}

\begin{lemma}[Decay]
\label{l:m}
 Let $f$ be a $C^1$-smooth SGP with a spectral density. Then  as $x \to \infty$
 \[ \max\{|K(x)|,|K'(x)|,|K''(x)| \} \to 0.\]
 \end{lemma}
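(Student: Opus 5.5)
The plan is to write all three functions as Fourier transforms of integrable functions and then apply the Riemann--Lebesgue lemma. Let $\rho$ denote the spectral density, so that $K(x) = \int e^{isx} \rho(s)\, ds$. Since $\int \rho = K(0) < \infty$, we have $\rho \in L^1(\R)$, and the Riemann--Lebesgue lemma immediately gives $K(x) \to 0$.

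For the derivatives, the key input is the second moment bound
\[ \int s^2 \rho(s)\, ds < \infty . \]
To obtain it, I will use that $f$ is $C^1$. Then $(f(h)-f(0))/h \to f'(0)$ almost surely as $h \to 0$. These difference quotients are centred Gaussian, and an almost sure limit of centred Gaussians is Gaussian with variance equal to the limit of the variances. Hence
\[ \int \frac{|e^{ish}-1|^2}{h^2}\, \rho(s)\, ds = \mathrm{Var}\Big(\frac{f(h)-f(0)}{h}\Big) \to \E[f'(0)^2] < \infty . \]
The integrand on the left converges pointwise to $s^2\rho(s)$. Fatou's lemma then gives $\int s^2 \rho(s)\, ds \le \E[f'(0)^2] < \infty$. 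It also follows that $\int |s|\rho(s)\, ds < \infty$, since $|s| \le 1 + s^2$.

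With these integrability bounds in hand, I differentiate under the integral sign twice, justified by dominated convergence. The difference quotients of $e^{isx}$ in $x$ are bounded by $|s|$, and those of $ise^{isx}$ by $s^2$, both integrable against $\rho$. This gives
\[ K'(x) = \int is\, e^{isx} \rho(s)\, ds \qquad \text{and} \qquad K''(x) = -\int s^2 e^{isx}\rho(s)\, ds , \]
which are Fourier transforms of the $L^1$ functions $is\rho(s)$ and $s^2\rho(s)$. Applying Riemann--Lebesgue to each gives $K'(x), K''(x) \to 0$ as $x \to \infty$.

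The only step requiring care is the finiteness of the second spectral moment, namely passing from $C^1$ sample paths to $\int s^2\, d\mu < \infty$. I expect the variance-convergence argument combined with Fatou's lemma above to handle it. The rest of the argument is routine.
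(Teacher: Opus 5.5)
Your proof is correct and takes the same route as the paper: show that $\rho$, $|s|\rho$ and $s^2\rho$ are integrable, then apply the Riemann--Lebesgue lemma to $K$, $K'$ and $K''$. The paper simply asserts that $C^1$ sample paths give $K \in C^2$ and hence a finite second spectral moment, whereas you justify this step explicitly through convergence of the variances of the difference quotients and Fatou's lemma.
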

\begin{proof}
Let $\rho$ denote the spectral density. Since $f$ is $C^1$-smooth, $K$ is in $C^2$, and so each of the densities $\rho dx$, $|x| \rho dx$, and $x^2 \rho dx$ are in $L^1$. The conclusion follows from the Riemann-Lebesgue lemma.
\end{proof}

\begin{lemma}[Tail bound]
\label{l:derbound}
Let $f$ be a $C^1$-smooth SGP. Then there exists $c_1,c_2 > 0$ such that if $r \ge 2$ and $s>c_1\sqrt{\log r}$, 
\[ \P \Big[ \sup_{x \in [0,r]}  |f'(x)| > s \Big] \le e^{-c_2 s^2 } .\]
\end{lemma}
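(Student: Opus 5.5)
We will prove Lemma~\ref{l:derbound} by viewing the supremum of $|f'|$ over $[0,r]$ as the supremum of a centred Gaussian process. We then combine a bound on its expected supremum with the Borell--TIS concentration inequality. Note that $f'$ is a stationary centred Gaussian process with covariance $-K''$ and variance $\sigma^2 := -K''(0) = \E[f'(0)^2] < \infty$. If $\sigma = 0$ then $f' \equiv 0$ almost surely and there is nothing to prove, so assume $\sigma > 0$.

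First we bound the expected supremum over a unit interval. Since $f$ is $C^1$, the process $f'$ is continuous. For a stationary Gaussian process, almost sure continuity (equivalently, almost sure boundedness on compacts) implies that $M_1 := \E[\sup_{x \in [0,1]} |f'(x)|] < \infty$. This follows from Fernique/Dudley type results: a.s.\ boundedness of a centred Gaussian process implies finite expected supremum, by the Landau--Shepp--Fernique theorem. This is the one point where we must be slightly careful, because we only know $C^1$ smoothness and have no quantitative modulus for $f'$. Integrability of the supremum of an a.s.\ bounded Gaussian process is exactly what Fernique's theorem provides, so no modulus is needed.

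Next we pass to $[0,r]$. Cover $[0,r]$ by $n = \lceil r \rceil$ unit intervals $[k,k+1]$. By stationarity, each $\sup_{[k,k+1]}|f'|$ has the law of $S := \sup_{[0,1]}|f'|$. Borell--TIS applied on a unit interval gives
\[ \P[S > M_1 + t] \le 2 e^{-t^2/(2\sigma^2)} , \]
where the factor $2$ accounts for $\pm f'$ (or apply Borell--TIS directly to $|f'|$ viewed as the supremum over $\{\pm f'\}$). A union bound then yields
\[ \P\Big[\sup_{[0,r]} |f'| > M_1 + t\Big] \le 2(r+1) e^{-t^2/(2\sigma^2)} . \]

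Finally we choose the constants. Take $s > c_1\sqrt{\log r}$ with $c_1$ large, and set $t = s/2$. Since $r \ge 2$, we have $\sqrt{\log r} \ge \sqrt{\log 2}$, so choosing $c_1 \ge 2M_1/\sqrt{\log 2}$ ensures $M_1 \le s/2$. The bound becomes $2(r+1)e^{-s^2/(8\sigma^2)}$. Moreover $\log(2(r+1)) \le C\log r \le C s^2/c_1^2$, so for $c_1$ large enough the prefactor is absorbed: $2(r+1) \le e^{s^2/(16\sigma^2)}$. This gives the claim with $c_2 = 1/(16\sigma^2)$.

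The only nontrivial input is the finiteness of $M_1$; the remaining steps are routine.
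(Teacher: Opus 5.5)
Your proof is correct and follows the same route as the paper, which justifies the lemma in one line as a consequence of Borell--TIS applied to $(f'(x))_{x\in[0,1]}$ together with a union bound. You also supply the details the paper leaves implicit: finiteness of $\E[\sup_{[0,1]}|f'|]$, covering $[0,r]$ by $\lceil r\rceil$ translates, and choosing $c_1$ so that both the mean and the $\log r$ prefactor are absorbed.
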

\begin{proof}
This is a simple consequence of the union bound and the Borell-TIS inequality applied to the process $(f'(x))_{x \in [0,1]}$.
\end{proof}

\subsection{Persistence events}

For an interval $I$ and $\sigma \in \{-,+\}$ define the \textit{persistence events}
\[ \mathcal{P}^\sigma_I = \{ \sigma f(x) > 0: x \in I \} =  \begin{cases} 
 \{ f (x) > 0 : x \in I \} & \text{if } \sigma = + \\ \{ f(x) < 0 : x \in I \} & \text{if } \sigma = - \end{cases} ,\]
and observe that $\mathcal{G}_I$ is the disjoint union of $\mathcal{P}_I^+$ and $\mathcal{P}_I^-$. 

\smallskip
There has been much recent work devoted to the study of the persistence probability $$P(r): = \P[\mathcal{P}_{[0,r]}^+]=G(r)/2$$ and the analogous quantities for non-centred processes. The following result summarises findings in \cite{ffm25, ffm26}:
\begin{proposition}[Persistence log-asymptotics {\cite{ffm25, ffm26}}]
\label{p:per}
Let $f$ be a $C^2$-smooth SGP with a spectral density.
\begin{enumerate}
\item If either ($\infty$-PD) is satisfied or ($\alpha$-PD) is satisfied with $\alpha > 1$, and $\int K(x) \,dx \neq 0$, then there exists $\zeta > 0$ such that, for any sequence $\eps_r$ satisfying $\eps_r \to 0$ as $r \to \infty$,
\[ -\log \P \big[ f(x) > -\eps_r : x \in [0,r] \big]   \sim \zeta r  \ , \quad \text{as } r \to \infty . \]
\item If ($\alpha$-PD) is satisfied with $\alpha \in (0,1)$, then there exists $\zeta > 0$ such that, for any $\eps \in \R$,
\[-\log \P \big[ f(x) > -\eps : x \in [0,r] \big]  \sim \zeta r^\alpha (\log r)  \ , \quad \text{as } r \to \infty . \]
 In this case $\zeta$ has the explicit form
 \[ \zeta = \frac{K(0) \sqrt{\pi} (1-\alpha)  }{\Gamma((1-\alpha)/2) \Gamma(1+\alpha/2) } =  \frac{2 K(0) (1-\alpha) \sin(\alpha \pi /2) }{ \sqrt{\pi} \alpha} . \]
\end{enumerate}
\end{proposition}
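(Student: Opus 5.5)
The plan is to deduce the statement directly from the persistence results of \cite{ffm25} (short-range/integrable case) and \cite{ffm26} (long-range case $\alpha \in (0,1)$). The only work beyond citation is to match the hypotheses and to handle the vanishing level $\eps_r$ in part (1). I use the monotonicity of persistence events in the level, together with continuity of the persistence exponent in the level.

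For part (1), I would invoke the main theorem of \cite{ffm25}. Under ($\infty$-PD), or ($\alpha$-PD) with $\alpha>1$, the kernel $K$ is integrable. Since $f$ is $C^2$ with a spectral density, the nondegeneracy hypotheses there are met (cf.\ Lemma~\ref{l:nondegen}). That theorem should give, for each fixed level $\ell$ in a neighbourhood of $0$, a persistence exponent
\[ \theta(\ell) := \lim_{r\to\infty} -\frac1r \log \P\big[f(x) > \ell : x\in[0,r]\big] \in (0,\infty), \]
which is continuous (indeed non-decreasing) in $\ell$. The condition $\int K \neq 0$ is exactly what guarantees that the exponent is finite and positive at $\ell=0$. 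Set $\zeta := \theta(0)$.

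Given $\eps_r \to 0$, fix $\eps>0$; then $|\eps_r|\le \eps$ for all large $r$. Persistence events are monotone in the level, so
\[ \P\big[f>\eps \text{ on } [0,r]\big] \le \P\big[f>-\eps_r \text{ on } [0,r]\big] \le \P\big[f>-\eps \text{ on } [0,r]\big]. \]
Taking $-\frac1r\log$ and letting $r\to\infty$ gives
\[ \theta(-\eps) \le \liminf_{r\to\infty} -\frac1r\log\P\big[f>-\eps_r \text{ on } [0,r]\big] \le \limsup_{r\to\infty} -\frac1r\log\P\big[f>-\eps_r \text{ on } [0,r]\big] \le \theta(\eps). \]
Sending $\eps\to0$ and using continuity of $\theta$ at $0$ yields the claim.

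For part (2), I would cite the long-range result of \cite{ffm26}. For $K(x)\sim x^{-\alpha}$ with $\alpha\in(0,1)$, it gives $-\log \P[f>\ell \text{ on } [0,r]] \sim \zeta r^\alpha \log r$ for every fixed level $\ell\in\R$, with the same constant $\zeta$ independent of $\ell$. This level-independence reflects that the cost is dominated by entropic repulsion to a level of order $\sqrt{\log r}$, which swamps any fixed shift. The explicit value of $\zeta$ is taken from \cite{ffm26}. The two stated forms agree by the reflection formula $\Gamma(z)\Gamma(1-z)=\pi/\sin(\pi z)$ applied with $z=(1-\alpha)/2$, together with the Legendre duplication formula; this is a routine check.

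The main obstacle is purely bookkeeping. One must confirm that the cited theorems are stated under hypotheses implied by ours (a $C^2$ process with a spectral density, with the precise decay assumptions), and that \cite{ffm25} indeed provides continuity of the exponent in the level at $\ell=0$. If continuity is only available as a one-sided statement, I would instead compare with a level shift. Write $f+\eps = f + \eps Z/\sqrt{\cdot}$, or use a Cameron--Martin shift by a bump function of $L^2$-norm $O(\eps\sqrt r)$, costing only $e^{O(\eps^2 r)}$ in probability. This gives the sandwich directly, with error $o(r)$ as $\eps\to0$.
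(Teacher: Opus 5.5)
\textbf{Overall.} Your treatment of the asymptotics is the paper's proof. For (1) the paper cites \cite[Theorem 1]{ffm25} for an exponent $\zeta_\eps$ at each fixed level and \cite[Theorem 4]{ffm25} for continuity of $\eps\mapsto\zeta_\eps$. Your monotone sandwich is exactly the ``combining'' step it leaves implicit. For (2) it cites \cite[Theorem 3]{ffm26} at level $0$, plus the remarks in \cite[Section 1.6]{ffm26} for other levels.

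Since continuity is available, your Cameron--Martin fallback is not needed. As written it is also slightly optimistic. Because $\log(1/\P)\asymp r$, a shift of Cameron--Martin norm $O(\eps\sqrt r)$ costs $e^{O(\eps r)}$, not $e^{O(\eps^2 r)}$. That is still $o(r)$ as $\eps\to0$, so it does no harm.

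\textbf{The step that fails.} Your ``routine check'' that the two closed forms of $\zeta$ coincide is wrong. Use $\Gamma(1+\alpha/2)=\frac{\alpha}{2}\Gamma(\alpha/2)$ and $\Gamma(\alpha/2)\Gamma(1-\alpha/2)=\pi/\sin(\pi\alpha/2)$. Then
\[
\frac{\sqrt{\pi}(1-\alpha)}{\Gamma(\frac{1-\alpha}{2})\Gamma(1+\frac{\alpha}{2})}\,\Big/\,\frac{2(1-\alpha)\sin(\frac{\alpha\pi}{2})}{\sqrt{\pi}\,\alpha}=\frac{\Gamma(1-\frac{\alpha}{2})}{\Gamma(\frac{1-\alpha}{2})}.
\]
This ratio is strictly less than $1$ for $\alpha\in(0,1)$, because $\Gamma$ is decreasing on $(0,1)$ and $0<\frac{1-\alpha}{2}<1-\frac{\alpha}{2}<1$. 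For instance, as $\alpha\to0$ the first expression tends to $K(0)$ and the second to $\sqrt{\pi}\,K(0)$. Reflection at $z=(1-\alpha)/2$ produces $\cos(\pi\alpha/2)$, not $\sin(\pi\alpha/2)$. No identity, duplication included, can remove a genuine numerical discrepancy.

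So the displayed equality in the statement is itself incorrect. The second expression equals the first with $\Gamma(\frac{1-\alpha}{2})$ replaced by $\Gamma(1-\frac{\alpha}{2})$. The paper's proof never checks this identity; it simply takes $\zeta$ from \cite{ffm26}. You should do the same: quote the constant exactly as it appears in \cite{ffm26}, and drop the claim that both forms hold.
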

\begin{proof}
(1). \cite[Theorem 1]{ffm25} states that, under more general conditions than we assume, for every $\eps \in \R$ there is a $\zeta_\eps > 0$ such that
\[ -\log \P \big[ f(x) > -\eps : x \in [0,r] \big]   \sim \zeta_\eps r  \ , \quad \text{as } r \to \infty . \]
\cite[Theorem 4]{ffm25} states that, again under more general conditions than we assume, the map $\eps \mapsto \zeta_\eps$ is continuous. Taking $\zeta := \zeta_0$, our conclusion follows by combining these statements.

(2). \cite[Theorem 3]{ffm26} states this result for $\eps = 0$, and see the comments in \cite[Section 1.6]{ffm26} for the extension to arbitrary $\eps \in \R$ (in fact any  $\eps_r$ satisfying $\eps_r / \sqrt{\log r} \to 0$).
\end{proof}

We state a well-known comparison result for persistence events:

\begin{proposition}[Slepian's lemma]
Let $(X_i)_{i\le n}$ and $(Y_i)_{i \le n}$ be centred Gaussian vectors satisfying, for all $1 \le i,j \le n$,
\[ \E[X_i^2] = \E[Y_i^2] \qquad \text{and} \qquad  \E[X_i X_j] \le \E[Y_i Y_j] . \]
Then for every $\ell \in \R$
\[\P \big[ X_i < \ell \ \forall i\big] \le \P \big[ Y_i < \ell  \ \forall i\big] .\]
\end{proposition}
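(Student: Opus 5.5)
The plan is to use Gaussian interpolation (Kahane–Piterbarg style) on a smoothed indicator of the event $\{X_i<\ell \ \forall i\}$. Let $\Sigma^X$ and $\Sigma^Y$ be the covariance matrices. For $t\in[0,1]$ set $Z(t)=\sqrt{1-t}\,X+\sqrt{t}\,Y$, with $X$ and $Y$ taken independent. Then $Z(t)$ is centred Gaussian with covariance $\Sigma(t)=(1-t)\Sigma^X+t\Sigma^Y$. Fix a smooth non-increasing function $\phi_\eps:\R\to[0,1]$ approximating $\id_{(-\infty,\ell)}$ from below, and put $F(z)=\prod_i\phi_\eps(z_i)$.

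The standard Gaussian integration by parts identity gives
\[ \frac{d}{dt}\E[F(Z(t))] = \frac12\sum_{i,j}\big(\Sigma^Y_{ij}-\Sigma^X_{ij}\big)\,\E\big[\partial_{ij}F(Z(t))\big]. \]
The diagonal terms vanish because $\E[X_i^2]=\E[Y_i^2]$. For $i\ne j$ the mixed derivative factorises:
\[ \partial_{ij}F=\phi_\eps'(z_i)\,\phi_\eps'(z_j)\prod_{k\ne i,j}\phi_\eps(z_k)\ \ge 0, \]
since the product of two non-positive derivatives is non-negative. The hypothesis gives $\Sigma^Y_{ij}-\Sigma^X_{ij}\ge0$, so $t\mapsto\E[F(Z(t))]$ is non-decreasing. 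Hence $\E[F(X)]\le\E[F(Y)]$.

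Now take $\phi_\eps\uparrow\id_{(-\infty,\ell)}$ pointwise as $\eps\to0$, for example $\phi_\eps=1$ on $(-\infty,\ell-\eps]$ and $\phi_\eps=0$ on $[\ell,\infty)$. Monotone convergence then gives $\E[F(X)]\to\P[X_i<\ell\ \forall i]$, and the same for $Y$, which proves the inequality.

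The main technical point is justifying differentiation under the expectation. This is fine because $F$ is smooth with bounded derivatives, so the integrand is uniformly bounded in $t$. The usual derivation of the identity uses the density of $Z(t)$, which may fail to exist when covariances are degenerate. To handle this, first replace $X$ and $Y$ by $X+\delta W$ and $Y+\delta W$, where $W$ is an independent standard Gaussian vector; this preserves both hypotheses. Then let $\delta\to0$. This limit is harmless for the smoothed $F$, because $F$ is continuous and bounded, and the smoothing limit $\eps\to0$ is taken afterwards.
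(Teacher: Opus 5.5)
The paper gives no proof of this statement. It quotes Slepian's inequality as a classical result and only remarks that it extends to continuous processes by approximation, so there is no argument of the paper's to compare with.

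Your proof is the standard Kahane--Piterbarg interpolation argument, and it is correct:
\begin{itemize}
\item the heat-equation identity for $\frac{d}{dt}\E[F(Z(t))]$ holds;
\item the diagonal terms vanish because of the equal-variance hypothesis;
\item for $i\neq j$ the mixed partials are non-negative, since $\phi_\eps'\le 0$ and $\phi_\eps\ge 0$;
\item the limit $\phi_\eps\to\id_{(-\infty,\ell)}$ is pointwise everywhere, including at $z_i=\ell$, so atoms coming from degenerate coordinates cause no trouble.
\end{itemize}

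Two small technical points.

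First, the pathwise derivative $\frac{d}{dt}F(Z(t))=\nabla F(Z(t))\cdot\big(-X/(2\sqrt{1-t})+Y/(2\sqrt{t})\big)$ is not uniformly bounded up to $t=0,1$. So you should differentiate under the expectation only on compact subintervals of $(0,1)$. Then use continuity of $t\mapsto\E[F(Z(t))]$ on $[0,1]$, which follows from bounded convergence.

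Second, the $\delta W$ regularisation is not actually needed. Gaussian integration by parts, $\E[G_i h(G)]=\sum_j \E[G_iG_j]\,\E[\partial_j h(G)]$, holds for degenerate $G$ too: write $G=AN$ with $N$ standard. If you keep the regularisation, make the perturbations of $X$ and $Y$ independent ($W$ and $W'$). Adding the same $W$ to both breaks the independence you need for the interpolated covariance to be $(1-t)\Sigma^X+t\Sigma^Y$.

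For context, the same interpolation device drives Section~\ref{s:split} of the paper. There the authors work with sharp indicators via Piterbarg's boundary-integral formula \eqref{e:pit}. That gives an exact expression for the derivative, which they need for quantitative splitting bounds. Your smoothed-indicator route gives only the sign of the derivative, which is exactly enough for Slepian's inequality and needs no regularity of domain boundaries.
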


By approximation the conclusion of Slepian's lemma extends in the natural way to continuous Gaussian processes indexed by (a finite union) of compact intervals.

\smallskip
We shall also make use of the following `sprinkled decoupling' inequality for persistence events, recently proven in \cite{mui23}:

\begin{proposition}[{\cite[Theorem 1.10]{mui23}}]
\label{p:sdi}
\label{t:sdi2}
Let $(X_i)_{i\le n}$ be a centred Gaussian vector, and abbreviate  $ \|\sigma^2\|_\infty := \max_i \textrm{Var}[X_i]$. Then for all $I_1,I_2 \subseteq \{1,\ldots,n\}$, $\ell \in \R$, and $\eps > 0$,
\begin{align*}
 &  \P[X_i > \ell \ \forall i \in I_1  \cap I_2 ]  \\
& \qquad \le  \P[X_i  > \ell \ \forall i \in I_1]\P[X_i > \ell -  \eps  \ \forall i \in I_2] +   \exp \Big(- \frac{ \eps^2 }{8 \|\sigma^2\|_\infty  \rho^2(I_1,I_2) } \Big) 
\end{align*}
 where 
 \begin{equation}
\label{e:mcc}
 \rho(I_1,I_2)  = \sup_{ \alpha \in \R^{|I_1|} , \beta \in \R^{|I_2|} }   \frac{ | \textrm{Cov}[  \langle \alpha , X_{I_1} \rangle , \langle \beta , X_{I_2} \rangle ] | }{ \sqrt{ \textrm{Var}[\langle \alpha , X_{I_1}  \rangle ] \textrm{Var}[ \langle \beta , X_{I_2}  \rangle] } } 
 \end{equation}
 is the \textit{maximum correlation coefficient}.
\end{proposition}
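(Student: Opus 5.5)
I read the left-hand side as the joint event $\{X_i>\ell\ \forall i\in I_1\cup I_2\}$, i.e.\ $\{X_{I_1}>\ell\}\cap\{X_{I_2}>\ell\}$; I believe the printed ``$I_1\cap I_2$'' is a typo for this. The plan is to decouple $X_{I_2}$ from $X_{I_1}$ by Gaussian conditioning and pay for the coupling with a sprinkle of size $\eps$.

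\textbf{Step 1 (decomposition).} Write $X_{I_2}=Y+Z$, where $Y:=\E[X_{I_2}\mid X_{I_1}]$ is a linear function of $X_{I_1}$ and $Z$ is a centred Gaussian vector independent of $X_{I_1}$. By the definition \eqref{e:mcc} of $\rho=\rho(I_1,I_2)$, every linear functional satisfies
\[ \textrm{Var}[\langle\beta,Y\rangle]\le\rho^2\,\textrm{Var}[\langle\beta,X_{I_2}\rangle]. \]
In particular $\textrm{Var}[Y_i]\le\rho^2\|\sigma^2\|_\infty$. Let $Y'$ be an independent copy of $Y$, independent of everything else. Then $X'_{I_2}:=Z+Y'$ has the same law as $X_{I_2}$ and is independent of $X_{I_1}$.

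\textbf{Step 2 (sprinkling).} On the event $\{X_{I_1}>\ell\}\cap\{X_{I_2}>\ell\}$, either $X'_{I_2}>\ell-\eps$ holds coordinatewise, or the discrepancy $D:=Y-Y'$ satisfies $D_i>\eps$ for some $i\in I_2$. By independence the first alternative contributes at most
\[ \P[X_{I_1}>\ell]\,\P[X_{I_2}>\ell-\eps], \]
which is the main term of the proposition.

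\textbf{Step 3 (the error term; main obstacle).} A naive union bound gives $\P[\max_i D_i>\eps]\le n\,e^{-\eps^2/(4\rho^2\|\sigma^2\|_\infty)}$. The factor $n$ is fatal, because the proposition claims a dimension-free bound. To remove it I would exploit monotonicity of the persistence events rather than controlling $\max_i D_i$.

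Concretely, I would interpolate $X^t_{I_2}:=Z+\sqrt{t}\,Y+\sqrt{1-t}\,Y'$ for $t\in[0,1]$, so that $X^1_{I_2}=X_{I_2}$ and $X^0_{I_2}=X'_{I_2}$. I would then study
\[ \Phi(t)=\P\big[X_{I_1}>\ell,\ X^t_{I_2}>\ell-\eps(1-t)\big]. \]
The Gaussian interpolation (Piterbarg-type) formula expresses $\Phi'(t)$ through the cross-covariances between $X_{I_1}$ and $X^t_{I_2}$, which are bounded in operator norm by $\rho$, paired against densities on the boundary of monotone sets. The drift from the linearly moving level $-\eps(1-t)$ should compensate this term up to a Gaussian-tail error. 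The key point is that the correlation enters only through a single direction: after whitening $X_{I_1}$ and $X_{I_2}$, the cross-covariance operator has norm at most $\rho$. Hence the relevant Cameron--Martin shift has norm $O(\rho)$ in whitened coordinates, and a Chernoff/Cameron--Martin estimate in one direction should give the error
\[ \exp\!\big(-\eps^2/(8\|\sigma^2\|_\infty\rho^2)\big) \]
with no dependence on $n$.

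If this interpolation route becomes unwieldy, my fallback is to condition on $X_{I_1}$. The conditional probability $y\mapsto\P[Z+y>\ell]$ is increasing and log-concave, and I would compare it with its $\eps$-shift via the Cameron--Martin theorem. Getting the constant $8$ and genuine dimension-independence is exactly where I expect the real difficulty to lie.
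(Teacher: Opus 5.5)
The paper gives no proof of this proposition; it is quoted from \cite{mui23}. Your reading of the left-hand side as $\{X_i>\ell\ \forall i\in I_1\cup I_2\}$ is the intended one, and it is how the inequality is used in the proof of Proposition~\ref{p:clus2}. Your Steps 1--2 are also correct. However, what they prove is only the union-bound inequality
\[ \P[X_{I_1}>\ell,\ X_{I_2}>\ell]\le \P[X_{I_1}>\ell]\,\P[X_{I_2}>\ell-\eps]+|I_2|\,e^{-\eps^2/(4\|\sigma^2\|_\infty\rho^2)} . \]
Incidentally, this weaker form would already suffice for the paper's only application, Proposition~\ref{p:clus2}, where $|I_2|$ is polynomial in $r$ and $\rho^{-2}\gtrsim r^{(\alpha-1)/2}$. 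But the dimension-free error term is the entire content of the proposition. Step 3 gives no argument for it, only the expectation that the drift ``should compensate'' and a Chernoff bound ``should give'' the error. So there is a genuine gap exactly at the core of the statement.

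Moreover, the one concrete justification you offer is false. Let $\Sigma_{kk}$ be the covariance of $X_{I_k}$, let $\tilde X_k:=\Sigma_{kk}^{-1/2}X_{I_k}$, and let $C:=\textrm{Cov}[\tilde X_1,\tilde X_2]$, so $\|C\|_{\mathrm{op}}=\rho$. A small operator norm does not make the shift small or one-dimensional. Take $n$ independent pairs $(U_j,V_j)$, each with correlation $\rho$: the maximum correlation coefficient is $\rho$ for every $n$, yet $\E[V\,|\,U]=\rho U$ has Cameron--Martin norm $\rho|U|\approx\rho\sqrt n$. Hence any Cameron--Martin or Chernoff comparison of $Y$ with $Y'$ or with $\eps\mathbf 1$ costs a factor of order $e^{c\rho^2 n}$. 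This includes your fallback; log-concavity of $y\mapsto\P[Z+y>\ell]$ does not change it. Likewise, $\Phi'(t)$ is a sum over all $(i,j)\in I_1\times I_2$ of covariances times nonnegative joint boundary densities, minus the drift term, and bounding it termwise reintroduces the dimension.

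The genuinely one-dimensional objects are the Gaussian barycentres of the events, not the shift. Write $A=\{X_{I_1}>\ell\}$ and $B=\{X_{I_2}>\ell\}$. To first order in the cross-covariance,
\[ \P[A\cap B]-\P[A]\P[B]\approx\big\langle \E[\tilde X_1\id_A],\,C\,\E[\tilde X_2\id_B]\big\rangle . \]
Here $|\E[\tilde X_1\id_A]|\lesssim \P[A]\sqrt{2\log(1/\P[A])}$, because it is a supremum of one-dimensional Gaussian functionals. Also $\E[\tilde X_2\id_B]=\Sigma_{22}^{1/2}\beta$, where $\beta_j\ge 0$ are the boundary densities of the orthant, so $|\E[\tilde X_2\id_B]|\le \|\sigma^2\|_\infty^{1/2}\sum_j\beta_j$. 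Meanwhile the sprinkling gain is $\eps\,\P[A]\sum_j\beta_j$. So either $\P[A]\le e^{-\eps^2/(2\|\sigma^2\|_\infty\rho^2)}$, in which case the bound is trivial, or the sprinkle wins. This step uses monotonicity (positivity of $\beta$) in an essential way.

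A proof has to make this trade-off rigorous beyond first order, i.e.\ along the non-product interpolated measures. Your proposal does not do this, so as written it establishes only the union-bound inequality above. For the full statement you need that missing argument, or to take it from \cite{mui23}.
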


\subsection{The gap probability and its derivatives}
 
We derive basic properties of $G$ and give Kac-Rice-type formulae for $G$ and its derivatives:

\begin{lemma}
\label{l:lambda}
Let $f$ be a $C^2$-smooth SGP with a spectral density. Then $G$ is convex and twice-differentiable. Assuming that $f$ has unit variance for simplicity, the derivatives can be expressed as
\begin{equation}
\label{e:gprime}
- G'(r) =  \E \big[  \big| \{  z \in [0,1] \cap \mathcal{Z} : f(z + \cdot) \in \mathcal{G}_{(0,r)}  \} \big| \big]  = (2\pi)^{-1/2} \E \big[ | f'(0)|  \id_{\mathcal{G}_{(0,r)} }   \big| f(0) = 0 \big] , 
\end{equation}
and
\begin{equation}
\label{e:gprimeprime}
G''(r) = (2\pi)^{-1}  ((1 - K(r)^2)^{-1/2}  \E \big[ | f'(0) f'(r) | \id_{\mathcal{G}_{(0,r)} }   \big| f(0) = f(r) = 0 \big]  .
\end{equation}
\end{lemma}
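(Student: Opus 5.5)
Throughout we take $K(0)=1$. The plan is to write $G(r)$ in terms of zero counts and then differentiate twice, each time with a Kac--Rice argument.

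\textbf{First derivative.} For $h>0$ the interval $[0,r]$ is contained in $[0,r+h]$, so
\[ G(r)-G(r+h) = \P\big[\mathcal{G}_{[0,r]}\setminus \mathcal{G}_{[0,r+h]}\big] = \P\big[\mathcal{G}_{[0,r]},\ \mathcal{Z}\cap (r,r+h] \ne \emptyset\big]. \]
By stationarity and reflection $x\mapsto -x$, we may look at the leftmost zero of a gap instead of the rightmost. Then $G(r)-G(r+h)$ is the probability that some zero $z\in[0,h)$ satisfies $\mathcal{Z}\cap(z,z+r]=\emptyset$, up to boundary terms. These boundary terms involve two zeros within distance $h$, or a zero exactly at the endpoint, and are $o(h)$. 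Indeed, by Lemma~\ref{l:nondegen} the vector $(f(0),f'(0))$ and the pairs at distinct points are non-degenerate. Together with the $C^2$ bound on $f''$, this gives $\P[|\mathcal{Z}\cap[0,h]|\ge 2]=o(h)$ through the standard second factorial moment estimate.

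At most one zero lies in $[0,h)$ except on an event of probability $o(h)$. Hence
\[ G(r)-G(r+h) = \E\big[ |\{z\in[0,h)\cap\mathcal{Z}: f(z+\cdot)\in\mathcal{G}_{(0,r)}\}| \big]+o(h). \]
The expectation is exactly $h$ times the quantity in \eqref{e:gprime}, by stationarity and additivity. This gives the right derivative; the left derivative is handled in the same way.

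For the Kac--Rice identity, apply the Kac--Rice formula with the weight $\id_{\mathcal{G}_{(0,r)}}(f(z+\cdot))$. This is legitimate by Lemma~\ref{l:nondegen}, and because the weight is continuous off a null set: the process has no tangential zeros almost surely (Bulinskaya). The result is $(2\pi)^{-1/2}\E[|f'(0)|\id_{\mathcal{G}_{(0,r)}}\mid f(0)=0]$.

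\textbf{Second derivative.} Write $\lambda(r)=-G'(r)=(2\pi)^{-1/2}\E[|f'(0)|\id_{\mathcal{G}_{(0,r)}}\mid f(0)=0]$. Repeat the argument under the Palm/conditional law given $f(0)=0$. The quantity $\lambda(r)-\lambda(r+h)$ equals the weighted probability that the gap starting at $0$ ends in $(r,r+h]$. Computing this with Kac--Rice for the conditioned process at the second zero gives
\[ \lambda(r)-\lambda(r+h)=\int_r^{r+h} p_{f(x)\mid f(0)=0}(0)\,(2\pi)^{-1/2}\E\big[|f'(0)f'(x)|\id_{\mathcal{G}_{(0,x)}}\mid f(0)=f(x)=0\big]\,dx. \]
Here $p_{f(x)\mid f(0)=0}(0)=(2\pi(1-K(x)^2))^{-1/2}$, so the integrand matches \eqref{e:gprimeprime}.

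It then remains to show that the integrand is continuous in $x$, so that $G$ is $C^2$. Continuity comes from the continuity of the conditional covariance structure in $x$, which is non-degenerate by Lemma~\ref{l:nondegen}, combined with dominated convergence. The integrand is non-negative, so $G''\ge 0$ and $G$ is convex.

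\textbf{Main obstacle.} The hard step is justifying that the indicator weight $\id_{\mathcal{G}_{(0,x)}}$ is almost surely continuous under the two-point conditioning. The difficulty is at the endpoints, where $f(0)=f(x)=0$ are forced.

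The plan is to use the $C^2$ regularity. Under the conditioning, $f'(0)$ and $f'(x)$ are almost surely nonzero by non-degeneracy. Therefore $f$ is bounded away from $0$ on $[\eta,x-\eta]$ except on small-probability events, and near the endpoints its sign is determined by $f'(0)$ and $f'(x)$. The moments needed for domination come from Gaussian tails, via Lemma~\ref{l:derbound}.
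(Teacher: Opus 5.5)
Your proposal is correct. For the first derivative it is essentially the paper's argument. A difference quotient of $G$ is rewritten as an expected number of gap-starting zeros, and the multiple-zero error is controlled by a second factorial moment. The one-point Kac--Rice formula is then applied. The paper handles the path-dependent weight by approximating $\mathcal{G}_{(0,r)}$ with finite-dimensional events and using dominated convergence, rather than by an a.s.\ continuity argument.

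One small correction: the discrepancy in your reflection step also contains the event that there is a zero in $[0,h)$ and another zero in $(z+r,r+h]$. This is not a ``two zeros within distance $h$'' term, but it is $O(h^2)$ by the off-diagonal bound on the two-point density, so nothing breaks.

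For the second derivative you take a genuinely different route. The paper writes the second difference $G(r)-2G(r+\delta)+G(r+2\delta)$ as $\P[N_{[-\delta,0]}\ge 1,\, N_{[r,r+\delta]}\ge 1,\, \mathcal{G}_{(0,r)}]$. It replaces this by $\E[N_{[-\delta,0]}N_{[r,r+\delta]}\id_{\mathcal{G}_{(0,r)}}]$, with errors like $\E[N(N-1)M]$ that need a bound on the three-point zero density. It then evaluates this by the two-point Kac--Rice formula for the original stationary process and lets $\delta\to 0$.

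You instead differentiate the formula already obtained for $\lambda=-G'$. Under the Palm law at $0$ the first zero after $0$ is unique, so $\lambda(r)-\lambda(r+h)$ is \emph{exactly} a weighted first moment of zeros in $[r,r+h)$. One more Kac--Rice application to the conditioned process gives $\lambda(r)-\lambda(r+h)=\int_r^{r+h}\kappa(x)\,dx$, with $\kappa$ the right-hand side of \eqref{e:gprimeprime}.

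What your route buys:
\begin{itemize}
\item It needs no multiplicity correction, hence no three-point estimate.
\item Existence of $G''$ becomes clean: an absolutely continuous $\lambda$ with continuous density is $C^1$. The paper instead reads $G''$ off as a limit of second difference quotients, which needs some uniformity in $r$ to upgrade to twice-differentiability.
\item Convexity is immediate, since $\id_{\mathcal{G}_{(0,r)}}$ is non-increasing in $r$ and so $\lambda$ is non-increasing.
\end{itemize}

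The price is twofold. You must justify Kac--Rice for a non-stationary conditioned process with a weight depending on both the path and the location of the zero. You must also prove continuity of $\kappa$ via the endpoint analysis you sketch. There, what you actually need in the interior is the a.s.\ absence of tangential zeros, not that $f$ is bounded away from $0$ off a small event.
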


\begin{proof}
In the proof we will repeatedly use the non-degeneracy provided by Lemma \ref{l:nondegen} without explicit reference. For an interval $I \subset \R$ we abbreviate $N_I = |\{ z \in I  \cap \mathcal{Z} \} |$. For a non-degenerate Gaussian vector $X$, let $\varphi_{X}(0)$ denote its density at zero. We first claim that
\begin{equation}
    \label{e:ll1}
 -G'(r) = \lim_{\delta \to 0}  \delta^{-1} \E \big[ N_{[-\delta,0]} \id_{\mathcal{G}_{(0,r)} }  \big]  
 \end{equation}
and
\begin{equation}
    \label{e:ll2}
G''(r)= \lim_{\delta \to 0}  \delta^{-2} \E \big[ N_{[-\delta,0]} N_{[r,r+\delta]} \id_{\mathcal{G}_{(0,r)} }  \big] .   
\end{equation}
Assuming this, let us complete the proof. By the Kac-Rice formula \cite[Theorem 11.2.1]{at07}, for every $\delta > 0$ we have
\begin{align}
\label{e:ll3}
 \E \big[  N_{[-\delta,0]} \id_{\mathcal{G}_{(0,r)} }  \big]  = \int_{-\delta}^{0}  \E \big[ | f'(x)| \id_{\mathcal{G}_{(0,r)} }   \big| f(x) = 0 \big] \varphi_{f(x)}(0) \, dx  .
  \end{align}
(Strictly speaking, one should apply \cite[Theorem 11.2.1]{at07} after replacing the event $\mathcal{G}_{(0,r)}$ with the finite-dimensional event  $\mathcal{P}^k_{(0,r)} = \{f(x_i) > 0 : x_i \in (0,r) \cap (\mathbb{Z}/k) \}$, and conclude by dominated convergence.) Similarly the Kac-Rice formula also gives
\begin{align}
\label{e:ll4}
 & \E \big[  N_{[-\delta,0]} N_{[r,r+\delta]} \id_{ \mathcal{G}_{(0,r)} }  \big] \\
\nonumber & \qquad \qquad = \int_{-\delta}^0 \int_{r}^{r+\delta}  \E \big[ | f'(x)| |f'(y) | \id_{\mathcal{G}_{(0,r)} }   \big| f(x) = f(y) = 0 \big] \varphi_{f(x),f(y)}(0) \, dy dx .
  \end{align}
Since the integrands in \eqref{e:ll3} and \eqref{e:ll4} are continuous functions of $x < 0$ and $y > r$, taking the $\delta \to 0$ limit we obtain
\[ - G'(r) = (2\pi)^{-1/2} \E \big[ | f'(0)|  \id_{\mathcal{G}_{(0,r)} }   \big| f(0) = 0 \big] \]
and
\[ G''(r) = (2\pi)^{-1}  ((1 - K(r)^2)^{-1/2}  \E \big[ | f'(0) f'(r) | \id_{\mathcal{G}_{(0,r)} }   \big| f(0) = f(r) = 0 \big] .\]
Finally, the Kac-Rice formula also gives
\[ \E \big[  \big| \{  z \in [0,1] \cap \mathcal{Z} : f(z + \cdot) \in \mathcal{G}_{(0,r)}  \} \big| \big]  = (2\pi)^{-1/2} \E \big[ | f'(0)|  \id_{\mathcal{G}_{(0,r)} }   \big| f(0) = 0 \big] \]
which completes the proof. 

It remains to establish \eqref{e:ll1} and \eqref{e:ll2}. First observe that
\[ -G'(r) =  \lim_{\delta \to 0} \delta^{-1} \P[ \mathcal{G}_{[0,r+\delta]} \setminus  \mathcal{G}_{[0,r]} ] =  \lim_{\delta \to 0} \delta^{-1} \P \big[ N_{[-\delta,0]} \ge 1 , \mathcal{G}_{(0,r)}   \big]    \]
where we used stationarity, and the fact that the events $\mathcal{G}_{(0,r)} $ and $\mathcal{G}_{[0,r]}$ coincide up to a null set. Similarly
\[ G''(r) =\lim_{\delta \to 0} \delta^{-2} \P \big[ N_{[-\delta,0]} \ge 1 , N_{[r,r+\delta]} \ge 1, \mathcal{G}_{(0,r)}   \big]    . \]
Using the fact that
\[  \big| \P[N \ge 1, M \ge 1, E] - \E[N M \id_E ] \big| \le \E[N(N-1)M] + \E[N M(M-1)]  \]
for arbitrary random variables $N,M \in \mathbb{N}$ and event $E$, it remains to show that 
\[ \E \big[ N_{[-\delta,0]} (N_{[-\delta,0]}-1) \big] = O( \delta^2) \qquad \text{and} \qquad \E \big[ N_{[-\delta,0]} (N_{[-\delta,0]}-1) N_{[r,r+\delta]} \big]  = O(\delta^3)  . \] 
By the Kac-Rice formula, these follows from the fact that (see, e.g., \cite[Theorem 1.13]{al21}) the two-point density of zeros
\[ \rho_2(x,y) =  \E \big[ | f'(x) f'(y) | \big| f(x) = f(y) = 0 \big] \varphi_{f(x),f(y)}(0)  \]
is bounded over $x \neq y$, while the three-point density 
\[ \rho_3(x,y,z) = \E \big[ | f'(x) f'(y) f'(z) | \big| f(x) = f(y) = f(z) = 0 \big] \varphi_{f(x),f(y),f(z)}(0)  \]
is bounded on $(x,y,z) \in ([-\delta,0]^2 \times [r,r+\delta]
)\setminus \{x=y\}$. (More precisely, \cite[Theorem 1.13]{al21} states that $\rho_3$ is bounded outside $\{x=y\} \cup \{x=z\} \cup \{y=x\}$ under the assumption that $f$ is $C^3$-smooth; in our case it is sufficient that $f$ is $C^2$-smooth since we assume $z$ is bounded away from $(x,y)$.)
\end{proof}

Combining the above formulae with Proposition \ref{p:per}, we derive asymptotic bounds on the derivatives of $G$:

\begin{proposition}
\label{p:gpp}
Under the conditions of Proposition \ref{p:per2} (either item), as $r \to \infty$
\[ \theta(r) := - \log (-G'(r))  \sim   - \log G(r)  \qquad \text{and} \qquad   \log G''(r)  \le    \log G(r)(1 + o(1)) . \]
\end{proposition}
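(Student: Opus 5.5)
The plan is to squeeze $-G'(r)$ between quantities whose logarithms are both asymptotic to $\log G(r)$. By Proposition \ref{p:per}, $-\log G(r) = -\log(2P(r))$ grows like $\zeta r$ or $\zeta r^\alpha\log r$. Both rates are regularly varying, so $\log G(r+1) \sim \log G(r)$; this fact will be used repeatedly.

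\emph{Upper bound on $-G'$.} By Lemma \ref{l:lambda}, $G$ is convex and non-increasing. Hence for any $h>0$,
\[ -G'(r) \le \frac{G(r-h) - G(r)}{h} \le \frac{G(r-h)}{h}. \]
Taking $h=1$ gives $\log(-G'(r)) \le \log G(r-1) = \log G(r)(1+o(1))$, that is, $\theta(r) \ge -\log G(r)(1+o(1))$.

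\emph{Lower bound on $-G'$.} Again by convexity, $-G'(r) \ge (G(r)-G(r+h))/h$. This needs a lower bound on $G(r)-G(r+h)$, which is not immediate since both terms are tiny. I would instead use \eqref{e:gprime} directly:
\[ -G'(r) = (2\pi)^{-1/2}\,\E\big[|f'(0)|\,\id_{\mathcal{G}_{(0,r)}} \,\big|\, f(0)=0\big]. \]
Under the conditioning $f(0)=0$, $f'(0)=v$ with $v\ge 1$, I would lower bound the probability that $f>0$ on $(0,r)$ as follows. On $(0,\eta]$, by a Taylor argument with a bounded second derivative (Lemma \ref{l:derbound}), $f$ stays positive near $0$. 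On $[\eta,r]$, I would compare with an unconditioned persistence event at a slightly raised level. Concretely, the conditioned process $f(x) - \E[f(x)\mid f(0),f'(0)]$ is a Gaussian process whose covariance differs from $K$ by terms involving $K(x)$ and $K'(x)$, which decay by Lemma \ref{l:m}. Using Gaussian change of measure, or simply FKG/Harris positive association for monotone events together with the independence of the Gaussian regression, I would show
\[ \P\big[\mathcal{P}^+_{(0,r)} \,\big|\, f(0)=0, f'(0)=v\big] \ge c\,\P\big[f(x)>\eps : x\in[1,r]\big]\cdot \P[\text{local event}], \]
with $\eps$ small (or $\eps_r\to0$). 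By Proposition \ref{p:per}, the right-hand side has logarithm $\log G(r)(1+o(1))$. This is the main obstacle: the conditioning on $(f(0),f'(0))$ shifts the mean by $vK(x)$-type terms and alters the covariance. A cleaner route may be to write $f = \tilde f + f(0)a(x) + f'(0)b(x)$, where $\tilde f$ is independent of $(f(0),f'(0))$. Then use the fact that $a,b$ are bounded and small away from $0$, and apply the sprinkled decoupling Proposition \ref{p:sdi} or Slepian to pass from $\tilde f$ back to $f$ on $[1,r]$ at the cost of an $\eps$-level shift. That shift is admissible by Proposition \ref{p:per}, which allows $\eps_r \to 0$ in item (1) and any fixed $\eps$ in item (2). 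Together with the upper bound, this gives $\theta(r)\sim-\log G(r)$.

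\emph{Bound on $G''$.} The simplest route is convexity: $G'$ is non-decreasing, so
\[ G''(r) \le \frac{G'(r+1)-G'(r)}{1} \le -G'(r) \]
would need $G''$ monotone, which is not known. Instead I would use \eqref{e:gprimeprime} directly. The factor $(1-K(r)^2)^{-1/2}$ is bounded for large $r$ by Lemma \ref{l:m}. Bounding the indicator by that of $\mathcal{G}_{(0,r)}$ and applying Cauchy--Schwarz or Hölder, with $p$ close to $1$, to separate $|f'(0)f'(r)|$, whose conditional moments are bounded, from the event, one gets
\[ G''(r) \le C\,\P\big[\mathcal{G}_{(1,r-1)} \,\big|\, f(0)=f(r)=0\big]^{1/p}. \]
The same regression decomposition (conditioning shifts are bounded near the endpoints and decay in the bulk) plus Slepian/decoupling bounds this by $\P[f> -\eps \text{ on }[1,r-1]]^{1/p}$, whose logarithm is $\log G(r)(1+o(1))/p$. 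Letting $p\downarrow1$ gives $\log G''(r)\le\log G(r)(1+o(1))$.
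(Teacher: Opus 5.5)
There is a genuine gap in the lower bound on $-G'(r)$, that is, in the half $\theta(r)\le -\log G(r)\,(1+o(1))$ of the first claim. Your Kac--Rice route needs a \emph{lower} bound on the pinned persistence probability $\P[\calP^+_{(0,r)}\mid f(0)=0,\,f'(0)=v]$. That bound must be in terms of an unconditioned persistence probability on $[1,r]$, up to a factor $G(r)^{o(1)}$. None of the tools you cite supplies it:
\begin{itemize}
\item Harris--FKG requires the conditioned process to have non-negative covariance, which nothing in the hypotheses guarantees (item (1) does not even assume $K\ge0$).
\item Slepian's lemma needs equal variances and ordered covariances, which the regressed process $\tilde f$ and $f$ do not have.
\item Proposition~\ref{p:sdi} is an \emph{upper} bound on a joint probability with an \emph{additive} error $\exp(-\eps^2/(8\|\sigma^2\|_\infty\rho^2))$. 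This is useless against a main term of size $G(r)$ unless $\rho=O((-\log G(r))^{-1/2})$, which is impossible when the local window at $0$ is adjacent to $[1,r]$.
\item The independence in $f=\tilde f+f(0)a+f'(0)b$ only yields $\P[\tilde f>0 \text{ on } J]\le C_\eps\,\P[f>-\eps \text{ on } J]$. That is the direction useful for upper bounds, not the one you need.
\end{itemize}
So this step is asserted, not proved.

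The repair is the convexity bound you wrote down and then set aside, taken with $h=r$ instead of $h=O(1)$: $-G'(r)\ge (G(r)-G(2r))/r$. By Proposition~\ref{p:per}, $\log G(2r)/\log G(r)$ tends to $2$ under item (1) and to $2^\alpha>1$ under item (2). Hence $G(2r)\le G(r)/2$ for large $r$, and so $-G'(r)\ge G(r)/(2r)$. Since $\log r=o(-\log G(r))$, this gives $\theta(r)\le -\log G(r)\,(1+o(1))$. That both terms are tiny is irrelevant: only their ratio matters, and the factor $1/r$ is invisible on the logarithmic scale. This is exactly the paper's argument (the displayed implication there should read $|G'(r)|\ge (G(r)-G(2r))/r$).

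The rest is sound. Your bound $-G'(r)\le G(r-1)-G(r)\le G(r-1)$ is correct and simpler than the paper's treatment of that direction. The paper instead applies H\"older to \eqref{e:gprime} and compares $\P[\calG_{(0,r)}\mid f(0)=0]$ with $\P[f>-\eps \text{ on }[0,r]]$. Combined with the $h=r$ bound, the first claim follows from convexity and Proposition~\ref{p:per} alone, and Kac--Rice is needed only for $G''$.

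Your $G''$ argument is the paper's: H\"older on \eqref{e:gprimeprime}, bounded conditional moments, and comparison with persistence at level $-\eps$. That comparison is the easy direction and needs no Slepian or decoupling. Write $f=\tilde f+f(0)a+f(r)b$ with $\tilde f$ independent of $(f(0),f(r))$ and $a,b$ uniformly bounded for large $r$. Independence then gives $\P[\tilde f>0 \text{ on }(0,r)]\,\P[|f(0)|+|f(r)|\le\eps/C]\le\P[f>-\eps \text{ on }[0,r]]$, which is what the paper's $\Psi(s,t)$ step achieves. Let $\eps\to0$ at the end, since item (1) of Proposition~\ref{p:per} pins down $\zeta$ only for vanishing levels.
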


\begin{proof}
Let us begin by establishing the upper bound on $\theta(r)$. By the convexity of $G$, 
\[  G(2r) \ge G(r) + r G'(r) \implies |G'(r)| \le \frac{G(r) - G(2r) }{r} .\]
Suppose the conditions in the first item of Proposition \ref{p:per} hold so that $G(r) = e^{-\zeta r + o(r)}$.  Then as $r \to \infty$
\[ |G'(r)| \le \frac{G(r) - G(2r) }{r} = \frac{e^{-\zeta r + o(r)} - e^{ - 2 \zeta r + o(r)}}{r} = e^{-\zeta r + o(r) } \]
which implies that 
\[ \theta(r) = - \log |G'(r)|  \le \zeta r + o(r) \sim - \log G(r) .\] 
Suppose instead that the conditions in the second item of Proposition \ref{p:per} hold so that $G(r) = e^{-\zeta r^\alpha \log r + o(r^\alpha \log r)}$. Then the same argument gives that
\[ \theta(r) = - \log |G'(r)|  \le \zeta r^\alpha (\log r) + o(r^\alpha (\log r) ) \sim - \log G(r) \]
also in that case. 



We turn to lower bounds on $\theta(r)$. Assume $K(0) =1$ for simplicity. Fix $\delta > 0$. Applying H\"{o}lder's inequality to \eqref{e:gprime}, and since  $f(0)$ and $f'(0)$ are independent,
\begin{align}
\nonumber -G'(r) & \le   (2\pi)^{-1/2}  \E \big[ |f'(0)|^{(1+\delta)/\delta}  \big]^{\delta/(1+\delta)} \P[ \mathcal{G}_{(0,r)}  | f(0) = 0 ]^{1/(1+\delta)}  \\
\label{e:lam} & = c_\delta \P[  \mathcal{G}_{(0,r)}  | f(0) = 0 ]^{1/(1+\delta)} .
\end{align}
It remains to bound $\P[  \mathcal{G}_{(0,r)} | f(0) = 0 ]$, which by symmetry is $2 \P[ f(x)  > 0  : x \in [0,r] | f(0) = 0]$. Fix $\eps > 0$, and for $s\geq 0$ define $\Psi(s) :=  \P[ f(x) > - \eps : x \in [0,r]   | f(0) =s]$. By Gaussian regression and the fact that $|K(x)|\leq K(0)=1$, 
\[ \E[f(x) | f(0) = s ] =  s K(x)  \ge - s K(0) = -s, \]
and so by monotonicity, if $s \in [0, \eps]$, 
\begin{align*}
\Psi(s) & \ge \P \big[ f(x)  > - \eps - s K(x) : x \in [0,r]  \big| f(0) = 0 \big] \\
 & \ge \P \big[ f(x)  > - \eps +s : x \in [0,r]  \big| f(0) = 0 \big] \\ 
 & \ge \P \big[   \mathcal{G}_{(0,r)}  \big| f(0) = 0 \big] .  
 \end{align*}
 On the other hand,
\begin{align*}
  \min_{s \in [0, \eps ]} \Psi(s) & \le \P \big[ f(x) > - \eps : x \in [0,r]   \big| f(0) \in [0,\eps] \big]   \\
  & \le  \frac{ \P[ f(x) > - \eps : x \in [0,r]  ] }{  \P[ f(0) \in [0, \eps ] ] } = c_\eps \P[ f(x) > - \eps : x \in [0,r]  ]   .
  \end{align*}
Combining with \eqref{e:lam} we have
\[-G'(r) \le c_{\delta,\eps} \big( \P \big[ f(x) > - \eps : x \in [0,r] \big]    \big)^{1/(1+\delta)}  .\]

Assume now the conditions in the first item of Proposition \ref{p:per}. Then applying the first item of Proposition \ref{p:per}, for every $\eps' > 0$ there exists a $\eps > 0$ sufficiently small such that
\[ \P[ f(x) > - \eps : x \in [0,r]  ] \le e^{-(\zeta-\eps')r } \]
for sufficiently large $r > 0$. We conclude that, for every $\delta > 0$ and $\eps' > 0$
\[-G'(r) \le  c_{\delta,\eps'}  e^{- ( (\zeta-\eps') / (1+\delta) ) r } \]
for sufficiently large $r > 0$, which proves the result by taking $\delta,\eps' \to 0$. The proof under the conditions in the second item of Proposition \ref{p:per} is identical, except that we apply the second item in Proposition \ref{p:per}. 

We turn to the bound on $G''(r)$. As in the previous proof it suffices to show that, for every $\delta,\eps > 0$,
\begin{equation}
\label{e:lam3}
G''(r) \le c_{\delta,\eps} \big( \P \big[ f(x) > - \eps : x \in [0,r]  \big]    \big)^{1/(1+\delta)}  . 
\end{equation}
Assume $K(0) =1$ for simplicity.  Fix $\delta > 0$. Applying H\"{o}lder's inequality to \eqref{e:gprimeprime},
\[ G''(r) \le   (2\pi)(1 - K(r)^2)^{-1/2}  \E \big[ |f'(0) f'(r) | ^{(1+\delta)/\delta} | f(0) = f(r) = 0  \big]^{\delta/(1+\delta)} \] \[\times \P[ \mathcal{G}_{(0,r)}  | f(0) = f(r) = 0 ]^{1/(1+\delta)}   .\]
By Gaussian regression, the conditional variance satisfies 
\[ \mbox{Var}[f'(0) |  f(0) = f(r) = 0 ] = -K''(0)-\frac{[K'(r)]^2}{1-K(r)^2} \le  -K''(0) . \] 
with the same estimate for 
$\mbox{Var}[ f'(r) | f(0) = f(r) = 0 ] $ by symmetry. By the Cauchy-Schwarz inequality, this implies the following uniform upper bound 
\[ \E \big[ |f'(0) f'(r) | ^{(1+\delta)/\delta}|f(0)=f(r)=0  \big] \le  c_\delta  . \] 
Since also $K \to 0$ we have, for $r$ sufficiently large,
\begin{equation}
\label{e:lam2}
G''(r) \le  c''_\delta  \P[ \mathcal{G}_{(0,r)}  | f(0) = f(r) = 0 ]^{1/(1+\delta)}  . 
\end{equation}
It remains to bound $\P[  \mathcal{G}_{(0,r)} | f(0) = f(r) = 0 ] = 2 \P[ f(x)  > 0  : x \in [0,r] | f(0) = f(r) = 0]$. Fix $\eps > 0$, and for $s,t \in \R$ define $\Psi(s,t) :=  \P[ f(x) > - \eps : x \in [0,r]   | f(0)  = s, f(r) = t]$. By Gaussian regression, 
\begin{align*}
 p(x;s,t) & := \E[ f(x) | f(0) =s, f(r) = t ] \\
 & = \frac{1}{1-K(r)^2} \Big( s ( K(x) - K(x-r)K(r) ) + t( K(x-r) - K(x) K(r) ) \Big)  \\
 & \ge  \frac{-2 (|s| + |t|)}{1 - K(r)^2}
 \end{align*}
and so by monotonicity, if $r$ is sufficiently large so that $K(r)^2 \le 1/2$ and $s,t \in [0, \eps/8]$, 
\begin{align*}
 \Psi(s, t) & \ge \P \big[ f(x)  > - \eps - p(x;s,t)  : x \in [0,r]  \big| f(0) =f(r)= 0 \big] \\
 & \ge \P \big[ \mathcal{G}_{(0,r)} \big| f(0) = f(r) = 0 \big] .
 \end{align*}
 On the other hand,
\[ \min_{s ,r \in [0, \eps/8]} \Psi(s,t) \le \frac{ \P[ f(x) > - \eps : x \in [0,r]  ] }{  \P[ f(0),f(r) \in [0, \eps/8] ] } \le c_\eps \P[ f(x) > - \eps : x \in [0,r]  ]   \]
where the last inequality used that $K \to 0$. This yields \eqref{e:lam3}, and thus we complete the proof of the proposition. 
 \end{proof}

\medskip
\section{Splitting of gap probabilities}
\label{s:split}

 In this section we show that gap events on well-separated intervals are approximately independent. We begin by considering collections of intervals which are pairwise well-separated. In that case we provide estimates with two-sided multiplicative error. In the following subsection we generalise the set-up to allow for `clustering' of intervals, but only prove weaker decoupling bounds.

\subsection{Splitting with multiplicative error}
Recall that $\mathcal{G}_I$ denotes the event that $I \cap \mathcal{Z}$ is empty. Let $\mathcal{Q}^k_{r,s}$ denote the set of collections of $k$ disjoint compact intervals $(I_i)_{i \le k}$ such that each of their lengths is at most $r$ and they are pairwise separated by distance at least $s$. For $k \ge 2$ define the splitting coefficient
\begin{equation}
\label{e:rho}
 \rho^k_{r,s} = \inf \Big\{ \eps \in [0,1] :   \frac{  \P[ \cap_i \calG_{I_i} ] }{ \prod_i \P[  \calG_{I_i} ] }  \in [ 1-\eps, 1/(1-\eps) ] \quad \text{for all }   \ (I_i)_{i \le k} \in \mathcal{Q}^k_{r,s}  \Big\}  . 
 \end{equation}
We establish conditions which ensure that $\rho^k_{r,s}$ is small if $s \gg r \gg 1$. We state two versions of this result. The first gives weaker bounds under general conditions, and will be sufficient for our purposes under the decay assumption ($\infty$-PD). The second requires more conditions but yields a bound which is strong enough to cover the case ($\alpha$-PD).

\smallskip

We begin by stating the weak version of the bound. Recall that $G(r) = \P[\mathcal{G}_{[0,r]}]$, and define $\bar{K}(s):=\max_{r\geq s}|K(r)|$.

\begin{proposition}[Splitting, weak version]
\label{p:wsplit}
Let $f$ be a $C^2$-smooth centred SGP with a spectral density. Then for every $k \ge 2$ there exists a $c > 0$ such that for every $r,s \ge 1$
\[ \rho^k_{r,s} \le c  r^2  \bar{K}(s)  / G(r)^k  .\]
\end{proposition}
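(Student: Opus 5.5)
Since the ratio bound is trivial when $c r^2\bar K(s)/G(r)^k \ge 1$ (take $c\ge 1$), we may assume $\bar K(s)$ is small. The plan is to prove an additive error bound
\[ \Big| \P[\cap_i \calG_{I_i}] - \prod_i \P[\calG_{I_i}] \Big| \le c\, r^2 \bar K(s) ,\]
for $(I_i)\in\mathcal{Q}^k_{r,s}$. Dividing by $\prod_i \P[\calG_{I_i}] \ge G(r)^k$ (monotonicity of $G$, stationarity, $|I_i|\le r$) then gives $\rho^k_{r,s} \le c' r^2\bar K(s)/G(r)^k$ whenever the right side is at most $1/2$. In that regime $|x-1|\le\eps\le 1/2$ implies $x\in[1-\eps,1/(1-\eps)]$.

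To get the additive bound, discretise each $I_i$ on a mesh of size $\delta$. Let $X$ be the Gaussian vector $(f(x))$ over all mesh points, and let $\tilde X$ be the same vector with cross-interval covariances set to zero, i.e.\ independent copies on each $I_i$. The gap event is approximated by the union over sign patterns $\sigma\in\{\pm\}^k$ of the orthant-type events $\{\sigma_i X_j >0,\ j\in I_i\}$. Using the standard Gaussian interpolation (Piterbarg / normal comparison) inequality,
\[ \big|\P[X\in A]-\P[\tilde X\in A]\big| \le \sum_{i\ne i'}\sum_{x\in I_i,\,y\in I_{i'}} |K(x-y)|\int_0^1 \varphi^{(h)}_{x,y}(0,0)\,dh ,\]
where $\varphi^{(h)}_{x,y}$ is the bivariate density of the interpolated pair. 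This density is bounded because $|K(x-y)|\le \bar K(s)$ is small. The naive count gives $(r/\delta)^2\bar K(s)$, which blows up as $\delta\to0$, so I will avoid the raw discretisation.

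The main obstacle is this mesh-dependence, and I will handle it with a continuous version of the interpolation. Interpolate $f^h = \sqrt h f + \sqrt{1-h}\tilde f$, where $\tilde f$ agrees in law with $f$ on each $I_i$ but is independent across intervals, and use Gaussian integration by parts for the smooth functional. The derivative in $h$ of $\P[\cap_i\calG_{I_i}]$ is then governed by a Kac--Rice second-order term. Concretely, it is bounded by
\[\sum_{i\ne i'}\int_{I_i}\int_{I_{i'}}|K(x-y)|\,\E\big[|f^{h\prime}(x)f^{h\prime}(y)|\,\big|\,f^h(x)=f^h(y)=0\big]\varphi(0,0)\,dx\,dy,\]
which plays the role of the two-point zero density. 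This is precisely the quantity controlled in the proof of Lemma~\ref{l:lambda}: by non-degeneracy (Lemma~\ref{l:nondegen}) and $C^2$-smoothness it is uniformly bounded, at least once $|K(x-y)|,|K'|,|K''|$ are small, which holds for $s$ large by Lemma~\ref{l:m}. Integrating over $x\in I_i$, $y\in I_{i'}$ gives $\le c k^2 r^2 \bar K(s)$.

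If the rigorous form of this continuous interpolation is awkward, I would fall back on two steps. First, discretise at mesh $\delta$ with the Piterbarg bound. Second, control the discretisation error, i.e.\ the probability of zeros between mesh points, by Kac--Rice, making it $O(r\delta)$ via the boundedness of $\rho_2$ near the diagonal. The key to keeping this route mesh-free is to replace the pairwise sum over points by sums over pairs of ``sign changes'', which the Kac--Rice densities control. Small $s$ (where $\bar K(s)$ is not small) is absorbed into the constant $c$, since $\rho\le1$ trivially.
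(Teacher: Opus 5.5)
Your architecture matches the paper's. You interpolate between $f$ and copies that are independent across the $I_i$, write the derivative as a double integral of $K(x-y)$ against a uniformly bounded kernel, integrate to an additive error $c k^2 r^2 \bar K(s)$, and divide by $\prod_i \P[\calG_{I_i}]\ge G(r)^k$. The gap is the derivative formula itself. It carries essentially all of the content, yet you assert it rather than derive it, and in the wrong form.

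Since $\id_{\cap_i \calG_{I_i}}$ is not a smooth functional, you must split $\calG_{I_i}=\mathcal{P}^+_{I_i}\sqcup \mathcal{P}^-_{I_i}$ into monotone persistence events and use the geometric covariance formula of \cite{bmr20}, as in Proposition~\ref{p:inter}. Its pivotal configurations are tangencies, not zero crossings. For interior $x,y$ the kernel is $\E[|f_t''(x)||f_t''(y)|\id_{A^{xy}_t}\mid f_t(x)=f_t(y)=f_t'(x)=f_t'(y)=0]$ times the density at $0$ of the $4$-vector $(f_t(x),f_t(y),f_t'(x),f_t'(y))$. There are also endpoint terms, from the Dirac masses at $\partial I_i$, where only $f_t=0$ is imposed. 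Once the event's indicator is present, an interior crossing with $f'(x)\neq 0$ is incompatible with the relaxed persistence event. So the two-point zero density from Lemma~\ref{l:lambda} cannot be the pivotal density; your bound only has the right shape because both kernels happen to be bounded.

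Accordingly, the boundedness you need is not that of $\rho_2$. You need uniform non-degeneracy of the $4$-vector over $|x-y|\ge 1$ and $t\in[0,1]$ (Lemma~\ref{l:gdb}). That lemma follows from non-degeneracy at every $x\neq y$, decay, and compactness, so no large-$s$ restriction or absorption argument is required. You also need $\E[f_t''(z)^2\mid \cdots]\le \mathrm{Var}[f''(0)]$, which is where $C^2$ is used.

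Your fallback does not close as written either. Piterbarg's inequality, with the bivariate density at $0$ taken in absolute value, discards exactly the factor that makes the sum mesh-free, and ``pairs of sign changes'' are again the wrong objects. With the exact Plackett identity, the derivative is $\sum_{a,b} K(x_a-x_b)\,\P[\text{all other mesh constraints}\mid X_a=X_b=0]\,\varphi_{ab}(0,0)$. The conditional probability is $O(\delta)$ at an interior mesh point, since both neighbours must carry the prescribed sign, forcing $|f'(x_a)|\lesssim \delta |f''(x_a)|$. At an endpoint it is $O(1)$. This cancels the $(r/\delta)^2$ count, and letting $\delta\to 0$ reproduces precisely the tangency-plus-endpoint kernel above.
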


The stronger version of the bound requires extra conditions. These are only used in one place in the proof (see Claim \ref{c:h} below), and we expect they could be weakened. 

\begin{conditions}
\label{c:ss}
Suppose that $K(x) > 0$, $\sup_x |K'(x)| / K(x) < \infty$, and for every $c_1 > 0$ there exists $c_2 > 0$ such that, for every $r \ge 1$
\begin{equation}
\label{e:decay}
\min_{|x| \le r} K(x) \ge c_1 \bar{K}(c_2 r) .
\end{equation}
\end{conditions}

\noindent We note that \eqref{e:decay} is satisfied if $K(x) > 0$ and $K(x) \sim |x|^{-\alpha}$, $\alpha > 0$.

\begin{proposition}[Splitting, strong version]
\label{p:ssplit}
Suppose $f$ is as in Proposition \ref{p:wsplit}, and moreover Conditions \ref{c:ss} hold. Then for every $k \ge 2$ there exists a $c > 0$ such that for every $r \ge c$ and $s \ge c r$
\begin{equation}
\label{e:ss}
\rho^k_{r,s} \le   c  r^2  \bar{K}(s)   (\log ( 1/G(r)) )^3  .
\end{equation}
\end{proposition}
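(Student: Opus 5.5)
We prove the bound one sign pattern at a time. Since $\calG_I = \calP^+_I \sqcup \calP^-_I$, we have
\[ \P[\cap_i \calG_{I_i}] = \sum_{\sigma \in \{\pm\}^k} \P\big[\cap_i \calP^{\sigma_i}_{I_i}\big], \qquad \P[\calP^\pm_{I_i}] = \tfrac12 \P[\calG_{I_i}]. \]
It therefore suffices to show, for each fixed sign pattern $\sigma$, that $\P[\cap_i \calP^{\sigma_i}_{I_i}] / \prod_i \P[\calP^{\sigma_i}_{I_i}]$ lies in $[1-\eps, 1/(1-\eps)]$ with $\eps = O(r^2 \bar K(s) (\log(1/G(r)))^3)$. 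Replacing $f$ by $\sigma_i f$ on each $I_i$ gives a Gaussian field $g$ on $\cup_i I_i$. Within each block it has the law of $f$, and its cross-block covariances are bounded by $\bar K(s)$ in absolute value.

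The strategy is an interpolation, or equivalently a perturbative comparison. Let $g_0$ be the field with independent blocks and set $g_t = \sqrt{t}\, g + \sqrt{1-t}\, g_0$. We control
\[ \frac{d}{dt} \log \P[g_t > 0 \text{ on } \cup I_i] \]
with the Gaussian interpolation (Piterbarg/Kahane) formula. The derivative equals a sum over pairs of blocks of
\[ \int_{I_i}\int_{I_j} \mathrm{Cov}(g(x),g(y))\, \frac{\partial^2 \P}{\partial x\,\partial y}\,dx\,dy. \]
Each mixed second derivative is, by Kac--Rice as in Lemma~\ref{l:lambda}, a two-point boundary-crossing density: the density that $g_t$ touches zero at $x$ and at $y$ while staying positive elsewhere. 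The weak version (Proposition~\ref{p:wsplit}) follows from crude bounds on these densities, which produce the factor $r^2 \bar K(s)$ divided by $G(r)^k$, since we are dividing by the product of the probabilities.

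For the strong version we must avoid the loss $1/G(r)^k$. The goal is to show that, conditional on the positivity event, the conditional density of $(g(x), g(y))$ at $(0,0)$ is at most polylogarithmic in $1/G(r)$. The steps, in order, are as follows.

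First, condition on $\{g(x)=g(y)=0\}$. The cross-covariance $\bar K(s)$ is small, and we may take $s \ge cr$ by choosing $c$ large. So, up to a mean shift of size $O(\bar K(s)\, r)$ controlled via Conditions~\ref{c:ss}, the conditioned event factorises into a two-point gap event on block $i$ and one on block $j$, with the other blocks unchanged.

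Second, bound each one-point conditioned probability $\P[\calP_{I}\mid g(x)=0]$ by $\P[\calP_I]$ times a factor $(\log(1/G(r)))^{O(1)}$. The idea is that pinning at a boundary zero costs at most polynomially in the entropic-repulsion height $\ell$. Under persistence the field sits at height $\ell \lesssim \sqrt{\log(1/G(r))}$, and the conditional derivative moments contribute a factor of order $\ell^2$. Combined with the factor $\ell$ from the density of $g(x)$ near zero under persistence, this gives the cube of $\log$.

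Third, integrate over $t\in[0,1]$, getting the total error $k^2 r^2 \bar K(s) (\log 1/G(r))^3$, which is small when the right side is small; then exponentiate. Since the ratio is controlled in both directions, the two-sided multiplicative form follows.

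The main obstacle is the second step: proving that conditioning on a zero at a point, or sprinkling the level by $\eps \approx \bar K(s)\, r \cdot \mathrm{poly}\log$, changes the persistence probability only by a factor $1+O(\eps\,\mathrm{poly}\log)$, uniformly in the interval. I would first try to handle this by a level-shift argument. Conditions~\ref{c:ss} let us dominate the shift function $\bar K(s)$ on $I$ by a small multiple of $K$ restricted to $I$, that is, by the regression mean of $f$ given $f(x_0)$ for a nearby point $x_0$ (Claim~\ref{c:h}). The shift can then be absorbed by a Cameron--Martin change of measure along $K(\cdot - x_0)$, whose cost is $\exp(O(\eps \ell))$ with $\ell$ the typical height, which is again polylogarithmic in $1/G(r)$.
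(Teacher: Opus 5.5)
Your skeleton is the right one: reduce to persistence events, use the interpolation formula of Proposition~\ref{p:inter}, and get a self-bounding estimate on the two-point tangency term $p_t(x,y)$ with only polylogarithmic loss. But there is a genuine gap. The estimate that carries the whole proof is not supplied, and the route you sketch towards it does not work.

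Your first step asserts that, after conditioning at $x$ and $y$, the event factorises into single-block events up to a small mean shift. Conditioning does not remove the cross-block covariances (of order $tK$) between the blocks. Turning small cross-covariances into multiplicative near-independence of persistence events is exactly what is being proved, and at intermediate $t$ it is no easier. So single-block bounds of the form $\P[\calP_I \mid g(x)=0] \le \P[\calP_I]\,(\log(1/G(r)))^{O(1)}$, even if established, cannot be assembled into a bound on $p_t(x,y)/\P[A_t]$ without a circular appeal to splitting. The entropic-repulsion heuristic for the polylogarithm is also not a proof, and it is not where the logarithms come from; they come from Gaussian tail cut-offs. Note too that with $\ell \asymp \sqrt{\log(1/G(r))}$ your count $\ell^2\cdot\ell$ gives $(\log)^{3/2}$, not $(\log)^3$.

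The missing idea is a de-pinning comparison for the full multi-block event at the same time $t$, with no factorisation (Proposition~\ref{p:depin}).

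\textbf{Construction.} Write $f_t = \mu^T v + g$ with $v = (f_t(x), f_t(y), f_t'(x), f_t'(y))$. One builds $h$ in the RKHS of the \emph{pinned} process $g$, so that $h$ and $h'$ vanish at $x$ and $y$; this is why $h = \mu^T\tilde w - \sum_{m\neq i,j}\sigma_m K(\cdot - d_m)$, with $\|h\|_H = O(1)$. One also takes a bounded open set $\mathcal{A}$ of pinning values such that $\mu^T w - \delta h$ has sign $\sigma_m$ on every $I_m$ whenever $w \in \delta\mathcal{A}$. Conditions~\ref{c:ss} enter precisely here: the regression mean must dominate on $I_i, I_j$, and $K(\cdot - d_m)$ must dominate the $O(\bar K(s))$ errors on the other blocks.

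\textbf{The comparison.} Monotonicity gives $\P[g + \delta h \in A_t] \le \inf_{w \in \delta\mathcal{A}} \P[g + \mu^T w \in A_t] \le \P[A_t]/\P[v \in \delta\mathcal{A}]$. Cameron--Martin with $\delta \asymp T^{-1} \asymp (\log(1/\P[A_t]))^{-1/2}$ relates the left side to the pinned probability. The bound $\P[v \in \delta\mathcal{A}] \gtrsim \delta^4$ then yields $(\log)^2$, and truncating $|f_t''(x) f_t''(y)|$ at $u \asymp \sqrt{\log(1/\P[A_t])}$ gives the third logarithm.

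\textbf{Where your shift idea fits.} Dominating the $O(\bar K(s))$ perturbation by a multiple of $K(\cdot - x_0)$ via Conditions~\ref{c:ss}, and paying with Cameron--Martin, is the right ingredient; it plays the role of $K(\cdot - d_m)$ in $h$. However, you aim it at a different statement: a level shift perturbing persistence by a factor $1 + O(\eps\,\mathrm{polylog})$. That neither keeps the shift inside the RKHS of the pinned process nor explains how the pinning itself is undone.

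\textbf{Integration in $t$.} The resulting self-bound involves $\log(1/\P[A_t])$ rather than $\log(1/G(r))$, since no a priori lower bound on $\P[A_t]$ is available for mixed sign patterns at intermediate $t$. The integration over $t$ must therefore go through the differential inequality $|u'| \le \gamma u^3$ for $u = -\log\P[A_t]$, not through a direct integration of a bound stated in terms of $G(r)$.
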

\begin{remark}
This improves on the weak version by replacing $G(r)^{-k}$ with $ (\log ( 1/G(r)) )^3 $. The exponent $3$ arises in the proof as $3 = 2d+1$ where $d=1$ is the dimension of the domain $\R$ of the process.
\end{remark}

We will apply these bounds to a finite number of gap events for which $r =  L_R$ is the order of the largest gap, $G(R)  \approx 1/R$, and $s \approx R^\delta$ is a mesoscopic scale. Hence for the first version to be effective we need $ L_R^2 \bar{K}(R^\delta) R^k \to 0$ for every $k$, which requires ($\infty$-PD), whereas the second version needs only that $ L_R^2 \bar{K}(R^\delta) (\log R)^3 \to 0$ which is satisfied under ($\alpha$-PD) for any $\alpha > 0$.

\smallskip
 In the regime of logarithmical decay $K(x) \sim (\log x)^{-\gamma}$, $\gamma > 0$, the second version is still not strong enough (even if we could remove the logarithmic factor), since in that regime the largest gaps are of order $L_R \approx e^{ (\log R)^{1/(\gamma + 1)} }$ so that $ L_R^2 K(R^\delta) \to \infty$. 

\medskip
To prove Propositions \ref{p:wsplit} and \ref{p:ssplit} it is sufficient to bound the analogous splitting coefficient for persistence events $\calP_I^{\pm}$. Define
 \[  \tilde{\rho}^k_{r,s} = \inf \Big\{ \eps \in [0,1] :   \frac{  \P[ \cap_i \calP^{\sigma_i}_{I_i} ] }{  \prod_i \P[  \calP^{\sigma_i}_{I_i} ]    }  \in [ 1-\eps,  1/(1-\eps)]  \quad \text{for all }  m \le k , \   (I_i) \in \mathcal{Q}^m_{r,s}, \ (\sigma_i) \in \{-,+\}  \Big\}  . \]
Then since $\calG_I$ is a disjoint union of  $\calP_I^+$ and $\calP_I^-$, it is easy to see that 
\begin{equation}
\label{e:persplit}
 \rho^k_{r,s} \le  \tilde{\rho}^k_{r,s} .
\end{equation}
We control $\tilde{\rho}^k_{r,s}$ by exploiting an interpolation formula for persistence events, described in the next section.

\subsection{Interpolation}
The interpolation formula is valid under much weaker conditions, so throughout this subsection we suppose that $f$ is a $C^2$-smooth centred Gaussian process such that the vector $(f(x),f(y),f'(x),f'(y))$ is non-degenerate for every $x \neq y$. 

\smallskip
To state the formula we introduce some definitions.  A \textit{clustering (of intervals)} is a finite collection of clusters $(\mathcal{C}_\ell)_{1 \le \ell \le m}$, each  containing a finite set of bounded intervals $(I_i)$, such that closures of $I_i \in \cup_{\ell} \mathcal{C}_\ell$ are mutually disjoint. A clustering $(\mathcal{C}_\ell)_{\ell \ge 1}$ is \textit{simple} if each cluster $\mathcal{C}_\ell$ contains a single interval. Propositions \ref{p:wsplit} and \ref{p:ssplit} concern simple clusterings; we introduce the general set-up with a view to applications in the final subsection.

\smallskip
 Let $(\mathcal{C}_\ell)_{\ell \ge 1}$ be a clustering of bounded intervals $(I_i)_{i \ge 1}$. For $t \in [0,1]$, define the process $f_t(z)$ on the collection of clusters  as follows: for $z$ in the cluster $\mathcal C_\ell$, i.e., $z \in \cup_{I_i \in \mathcal{C}_\ell} I_i$ then
\[  f_t(z) = \sqrt{t} f(z) + \sqrt{1-t} \tilde{f}_\ell(z) \]
where, for each $\ell \ge 1$, $\tilde{f}_\ell$ is an independent copy of the restriction $f|_{\cup_{I_i \in \mathcal{C}_\ell} I_i}$. A direct computation shows that
\begin{equation}\label{covwitht}
\mathbb{E}[f_t(x)f_t(y)] =
\begin{cases}
K(x,y), & x,y \mbox{ are in the same cluster},\\
t\,K(x,y), &x,y \mbox{ are in two different clusters.}
\end{cases}
\end{equation}

 Associate to every $I_i \in \mathcal{C}_\ell$ a sign $\sigma_i \in \{-,+\}$. Let $A_t$ denote the event $\{f_t  \in \cap_i  \calP^{\sigma_i}_{I_i}\}$, and let $A_t^{xy}$ be the modification of $A_t$ for which, at points $x \in I_i$ and $y \in I_j$, we relax the condition $\{ \sigma_i f(x) > 0 , \sigma_j f(y) > 0\}$ to its non-strict equivalent $\{ \sigma_i f(x) \ge 0 , \sigma_j f(y) \ge 0\}$.

\smallskip
Let $\mu_{I_i}$ be the measure defined as the sum of the Lebesgue measure on $\textrm{int}(I_i)$ and a Delta mass on each point of $\partial I_i$.

\begin{proposition}[Interpolation]
\label{p:inter}
The map $t \mapsto \P[A_t]$ is absolutely continuous on $[0,1]$ with a.e.\ derivative
 \begin{align}
 \label{e:inter}
   \frac{d}{dt} \P[A_t] =  \sum_{\ell < \ell'} \sum_{I_i \in \mathcal{C}_\ell, I_j \in \mathcal{C}_{\ell'}}  \sigma_i \sigma_j  \int \int K(x,y)  p_t(x,y) \, \mu_{I_i}(dx) \mu_{I_j}(dy)   
\end{align}
where, if $x \in \textrm{int}(I_i)$ and $y \in \textrm{int}(I_j)$,
\begin{equation}
\label{e:p}
 p_t(x,y) :=  \E \big[   |f_t''(x)| |f_t''(y)|   \id_{A_t^{xy}} \big| f_t(x) = f_t(y) = f_t'(x) = f_t'(y) = 0 \big] \tilde{\varphi}_t(x,y) ,
 \end{equation}
 and $\tilde{\varphi}_t(x,y)$ denotes the density at zero of the Gaussian vector $(f_t(x), f_t(y), f_t'(x), f_t'(y))$, whereas if $x \in \partial I_i$ (resp.\ $y \in \partial I_j$) the terms $f''_t(x)$ and $f'_t(x)$ (resp.\ $f''_t(y)$ and $f'_t(y)$) are removed from the integrand, conditioning, and Gaussian density in \eqref{e:p}. 

In particular, \eqref{e:inter} implies that
\begin{align}
    \label{e:intercon}
& \P \big[ \cap_i \calP^{\sigma_i}_{I_i} \big] -  \prod_{\ell \ge 1} \P \big[ \cap_{I_i \in \mathcal{C}_\ell} \calP_{I_i}^{\sigma_i}  \big]  \\
& \nonumber \qquad \qquad =  \int_0^1  \sum_{\ell < \ell'} \sum_{I_i \in \mathcal{C}_\ell, I_j \in \mathcal{C}_{\ell'}} \sigma_i \sigma_j  \int \int K(x,y)  p_t(x,y) \, \mu_{I_i}(dx) \mu_{I_j}(dy)  \, dt      .
\end{align} 
\end{proposition}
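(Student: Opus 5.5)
The plan is the standard Gaussian interpolation argument (Piterbarg-type formula), made rigorous via finite-dimensional approximation. Discretise first: for $n \ge 1$ let $D_n^i$ be a finite grid in $I_i$ containing $\partial I_i$ and with mesh $\to 0$. Set $A_t^n = \{ \sigma_i f_t(x) > 0 : x \in D^i_n, \ \forall i\}$. For a finite centred Gaussian vector $X_t$ with covariance $\Sigma_t$ given by \eqref{covwitht}, the classical identity
\[ \frac{d}{dt}\P[X_t \in O] = \sum_{a<b} \frac{d\Sigma_t^{ab}}{dt} \int_{\partial_a O \cap \partial_b O} \varphi_{\Sigma_t} \]
holds when $O$ is an orthant. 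This follows from $\partial_{\Sigma^{ab}}\varphi = \partial_a\partial_b \varphi$ for $a \ne b$ and two integrations by parts; non-degeneracy is ensured by the hypothesis, at least for $t<1$. Since $d\Sigma^{ab}_t/dt = K(x_a,x_b)$ for points in different clusters and $0$ otherwise, this gives
\[ \frac{d}{dt}\P[A^n_t] = \sum_{\ell<\ell'} \sum_{I_i\in\mathcal{C}_\ell, I_j \in \mathcal{C}_{\ell'}} \sigma_i\sigma_j \sum_{x\in D^i_n, y \in D^j_n} K(x,y)\, \P\big[\text{other constraints} \,\big|\, f_t(x)=f_t(y)=0\big]\, \varphi_{f_t(x),f_t(y)}(0). \]
Integrating in $t$ yields an exact finite-dimensional version of \eqref{e:intercon}. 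Note that $A^n_0$ factorises over clusters, since the clusters are independent at $t=0$.

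The key step is the passage $n \to \infty$: show that the discrete double sum converges to $\int\int K p_t \,d\mu_{I_i} d\mu_{I_j}$, locally uniformly in $t$. Fix $x$ and $y$ in the interiors. Conditioned on $f_t(x)=0$ together with the remaining grid constraints, the neighbouring grid points force $f_t$ to have a local minimum of $\sigma_i f_t$ near $x$ with value nearly $0$. Heuristically, the sum over grid points $x$ of $\varphi_{f_t(x)}(0)\,\P[\cdot \mid f_t(x)=0]$ counts the grid points where the discretised constraint is nearly tight. This behaves like a Kac--Rice count of tangencies, and each tangency contributes weight $|f_t''(x)|$ by a Taylor expansion $f_t(x+h)\approx \tfrac12 f_t''(x)h^2$. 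Concretely, I would write the discrete sum as a Riemann-type sum and compare it with the Kac--Rice density for the $4$-dimensional vector $(f_t,f_t')(x,y)$ conditioned to vanish. The factor $|f''_t(x)||f''_t(y)|$ arises exactly as the Jacobian of $(f_t',f_t)$ at a double zero. Boundary points of $I_i$ are never local tangencies in the interior sense; they carry their own atoms in the discrete sum, which produces the Delta masses in $\mu_{I_i}$ with only $f_t(x)=0$ conditioned. The closed constraints $A_t^{xy}$ arise because the conditioning points themselves satisfy equality.

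This limit is the main obstacle. It requires uniform integrability, which I would get from the following ingredients:
\begin{itemize}
\item bounded two-point zero densities, via \cite[Theorem 1.13]{al21}, as in Lemma \ref{l:lambda};
\item non-degeneracy of $(f_t(x),f_t(y),f_t'(x),f_t'(y))$ for $x\ne y$, which is inherited by $f_t$ from $f$ because the two are mixed independently;
\item a bound on $p_t$ near the diagonal, which is harmless since $x$ and $y$ lie in different clusters with disjoint closures.
\end{itemize}
Dominated convergence in $n$ then gives convergence of $\P[A^n_t]$ to $\P[A_t]$, because $f_t$ is continuous and $\P[\sigma_i f_t$ touches $0$ without crossing$]=0$ by Bulinskaya-type non-degeneracy. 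We also need convergence of the integrated derivatives.

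Absolute continuity then follows from the integrated identity, $\P[A_t]-\P[A_0] = \int_0^t (\cdots)\,ds$, with an integrand in $L^1([0,1])$. Near $t=1$ the covariance is still non-degenerate by hypothesis, so the integrand is in fact bounded on compacts of $[0,1)$ and integrable up to $1$. Finally, \eqref{e:intercon} is the case $t$ from $0$ to $1$, using $A_1=\cap_i \calP^{\sigma_i}_{I_i}$ and the fact that $A_0$ has probability $\prod_\ell \P[\cap_{I_i\in\mathcal{C}_\ell}\calP^{\sigma_i}_{I_i}]$ by independence of the $\tilde f_\ell$.
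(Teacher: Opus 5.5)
Your route is genuinely different from the paper's, and it is viable, but its central step is only heuristic at present.

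\textbf{The two routes.} The paper, via \cite{bmr20}, approximates $f$ by processes with values in finite-dimensional function spaces. Each cluster event is then a domain in coefficient space with piecewise smooth boundary. The boundary identity \eqref{e:pit}, extended to $m$ blocks as \eqref{e:pit2}, combined with the coarea formula, gives the right-hand side of \eqref{e:inter} \emph{exactly} at every finite dimension. The weights $|f_t''(x)||f_t''(y)|$, the conditioning on $f_t=f_t'=0$ and the closed event $A_t^{xy}$ all come from parametrising the boundary by tangency points. Only a continuity argument as the dimension grows remains.

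You instead discretise the index set and use the orthant identity. This makes the finite-dimensional step elementary and handles any number of clusters at once, so no analogue of \eqref{e:pit2} is needed. The price is that at finite $n$ there is no tangency structure at all. It has to be extracted from a second-order scaling limit in which each pair of interior grid points contributes $h_xh_y\,(p_t(x,y)+o(1))$. This is a two-point analogue of the convergence of the density of a discretised minimum to the Rice-type formula for the density of the continuous minimum. That limit is the whole content of your proof, and you only sketch it.

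\textbf{What the limit needs.} For pointwise convergence, condition further on $(f_t'(x),f_t'(y))=(h_xu_1,h_yu_2)$. A Taylor expansion (continuity of $f_t''$ suffices) turns the neighbour constraints into $|u_1|<\sigma_if_t''(x)/2$ and $|u_2|<\sigma_jf_t''(y)/2$. Integrating over $(u_1,u_2)$ then produces the factor $|f_t''(x)||f_t''(y)|$.

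For domination you need a bound $O(h_xh_y)$ per interior pair, uniformly in $t$ and in $x,y$. Bounded two-point zero densities do not give this: they carry a factor $|f_t'(x)|$, which is small on exactly the relevant event. Rolle's theorem does work:
\begin{itemize}
\item The event $\{f_t(x)=0,\ \sigma_if_t(x\pm h_x)>0\}$ forces a critical point of $f_t$ in $(x-h_x,x+h_x)$, and similarly near $y$.
\item The Kac--Rice formula for $f_t'$, conditionally on $f_t(x)=f_t(y)=0$, bounds the expected number of such pairs of critical points by $O(h_xh_y)$.
\item This uses non-degeneracy of $(f_t(x),f_t(y),f_t'(z_1),f_t'(z_2))$ for $z_1$ near $x$ and $z_2$ near $y$. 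That follows from the hypothesis by continuity and compactness, together with the monotonicity in $t$ used in Lemma~\ref{l:gdb}.
\end{itemize}

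\textbf{Non-degeneracy of the grid vector.} Under this subsection's hypothesis the full grid vector can be degenerate, for instance for finite-rank processes. Non-degeneracy is only guaranteed in the spectral-density setting of Lemma~\ref{l:nondegen}. So your integration by parts should be done for a small independent perturbation of the grid vector and then passed to the limit. That limit only needs non-degeneracy of triples $(f_t(x_a),f_t(x_b),f_t(x_c))$ with $x_a,x_b$ in different clusters, and the hypothesis does give this for $t<1$.
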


\begin{remarks}
In the case of a simple clustering, the conclusion \eqref{e:intercon} reads more simply
\[  \P \big[ \cap_i \calP^{\sigma_i}_{I_i} \big] -  \prod_{i \ge 1} \P \big[ \calP_{I_i}^{\sigma_i}  \big]   =  \int_0^1  \sum_{i <  j}  \sigma_i \sigma_j  \int \int K(x,y)  p_t(x,y) \, \mu_{I_i}(dx) \mu_{I_j}(dy)  \, dt      .\]
\end{remarks}
\begin{proof}
In the case of two clusters ($m=2$), this is an instance of a general covariance formula valid for all `topological events' of smooth Gaussian fields on general (stratified) manifolds \cite{bmr20}. We describe how to extend the proof to $m \ge 2$; in fact only the very first step in the proof needs to be modified.

The proof in \cite{bmr20} relies on the following classical finite-dimensional version of the interpolation formula (\cite[Lemma 2.22]{bmr20}, and see also \cite[Theorem 1.4]{piterbarg}). Let $(X^1_t, X^2_t)$ be a Gaussian vector in $\R^{2n }$ with covariance matrix 
\begin{equation}
    \label{e:cov}
 \textrm{Cov}(X^i_t,X_t^j) = \begin{cases} I_n&  \text{if } i = j, \\ t I_n & \text{if } i \neq j , \end{cases} 
 \end{equation}
where $I_n$ is the identity matrix. Let $A^i$, $i=1,2$, be domains in $\R^n$ whose boundaries are piecewise smooth, and which have surface areas, inside the ball of radius $R$, that grow
at most polynomially in $R$. Then $t \mapsto \mathbb{P}[X^1_t \in A^1, X^2_t \in A^2]$ is absolutely continuous on $[0,1]$ with a.e.\ derivative
\begin{equation}
    \label{e:pit}
 \frac{d}{dt}\mathbb{P}\left[X^1_t \in A^1, X^2_t \in A^2 \right]=\int_{\partial A^1 \times \partial A^2}\langle\nu_{A^1}(x_1),\nu_{A^2}(x_2)\rangle \gamma_t(x_1,x_2) \, d x_1 dx_2, 
 \end{equation}
 where $\nu_{A^i}$ is the outward unit normal vector on the boundary of $A^i$, $\int dx_i$ denotes integration with respect to the surface area measure on $\partial A^i$, and $\gamma_t(x_1,x_2)$ is the density of $(X^1_t,X^2_t)$. The proof of \eqref{e:pit} is a minor modification of the proof of \eqref{e:cov} (see \cite{bmr20}).

 With \eqref{e:pit} in hand, the steps of the proof in \cite{bmr20} (i.e.\ the case $m=2$) are:
 \begin{itemize}
     \item Assume that $f$ takes values in a finite-dimensional space of functions, and identify $f$ with a finite-dimensional standard Gaussian vector in this space;
     \item Apply \eqref{e:pit} to the events $A^1 = \{ \cap_{I_i \in \mathcal{C}_1} \mathcal{P}_{I_i}^{\sigma_i} \}$ and $A^2 = \{ \cap_{I_i \in \mathcal{C}_2} \mathcal{P}_{I_i}^{\sigma_i} \}$;
     \item Applying the coarea formula, express the right-hand side of \eqref{e:pit} as the right-hand side of \eqref{e:inter} (in the case $m=2$);
     \item Pass to the limit as the dimension of the function space tends to infinity.
 \end{itemize}

To extend to $m \ge 2$, we only need to generalise \eqref{e:pit} as follows. Let $ (X^1_t,\ldots, X^m_t)$ be a Gaussian vector in $\R^{nm }$ with covariance matrix as in \eqref{e:cov}. Let $A^\ell$, $1 \le \ell \le m$, be domains in $\R^n$ as in \eqref{e:pit}. Then $t \mapsto \mathbb{P}[\cap_{1 \le \ell \le m} \{ X^\ell_t \in A^\ell \} ]$ is absolutely continuous on $[0,1]$ with a.e.\ derivative
\begin{equation}
    \label{e:pit2}
 \frac{d}{dt} \mathbb{P} \left[\cap_{1 \le \ell \le m} \{ X^\ell_t \in A^\ell \} \right]= \sum_{1 \le \ell < \ell' \le m} \int_{\partial A^\ell \times \partial A^{\ell'}}\langle\nu_{A^\ell}(x_\ell),\nu_{A^{\ell'}}(x_{\ell'})\rangle \gamma^{\ell,\ell'}_t(x_\ell,x_{\ell'}) \, d x_\ell dx_{\ell'}, 
 \end{equation}
 where $\nu_{A^i}$ and $\int dx_i$ are as in \eqref{e:pit}, and $\gamma^{\ell,\ell'}_t(x_\ell,x_{\ell'}) $ denotes the density of $(X^\ell_t,X^{\ell'}_t)$.

Assuming that $f$ takes values in a finite-dimensional space of functions, and applying \eqref{e:pit2} to the events $A^\ell = \{ \cap_{I_i \in \mathcal{C}_\ell} \mathcal{P}_{I_i}^{\sigma_i} \}$, the remainder of the proof proceeds as in the case $m=2$.
\end{proof}

\subsection{Proof of the splitting for simple clusterings}
We now show how to apply Proposition \ref{p:inter} to derive the splitting results in Propositions \ref{p:wsplit} and \ref{p:ssplit}. We begin with a useful preliminary lemma:

\begin{lemma}
\label{l:gdb}
Let  $f_t$ be as in Proposition \ref{p:inter}. Then there exists  $c > 0$ such that, for every  $|x-y| \ge 1$ and $t \in [0,1]$,
\[  1/c < \det( \Sigma_{x,y;t} ) < c \]
where $\Sigma_{x,y;t}$ is the covariance matrix of $(f_t(x),f_t(y),f_t'(x),f'_t(y))$. 
\end{lemma}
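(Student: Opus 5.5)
The upper bound is routine and the real content is the lower bound, which I would get from compactness on a bounded range of separations and a decay argument at large separations.

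\textbf{Setup.} The context is Proposition~\ref{p:inter}, where $f$ is the stationary SGP of Propositions~\ref{p:wsplit}--\ref{p:ssplit}, so $K(x,y)=K(x-y)$ and Lemmas~\ref{l:nondegen} and~\ref{l:m} apply. Fix $x,y$ with $d:=|x-y|\ge 1$ and $t\in[0,1]$.

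If $x,y$ lie in the same cluster, then by \eqref{covwitht} the covariance matrix $\Sigma_{x,y;t}$ equals $\Sigma_{x,y;1}$, the covariance of $(f(x),f(y),f'(x),f'(y))$. If they lie in different clusters, then every cross-covariance entry is multiplied by $t$, while the diagonal blocks $\begin{pmatrix} K(0) & 0\\ 0 & -K''(0)\end{pmatrix}$ are unchanged. Here $\mathrm{Cov}(f(x),f'(x))=K'(0)=0$. In both cases $\Sigma_{x,y;t}=M(d,\tau)$ for some $\tau\in[0,1]$, where
\[ M(d,\tau)=\begin{pmatrix} D & \tau C(d)\\ \tau C(d)^T & D\end{pmatrix},\]
with $D$ the (fixed) diagonal block and $C(d)$ the block of cross-covariances, whose entries are $\pm K(d),\pm K'(d),-K''(d)$. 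Stationarity is what reduces everything to the two scalar parameters $(d,\tau)$, and it suffices to bound $\det M(d,\tau)$ above and below uniformly over $d\ge1$, $\tau\in[0,1]$.

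\textbf{Upper bound.} The entries of $M$ are bounded by $\max(K(0),|K'|_\infty,|K''|_\infty)<\infty$, since $f$ is $C^2$. Hence $\det M$ is uniformly bounded above.

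\textbf{Lower bound, large $d$.} By Lemma~\ref{l:m}, $C(d)\to 0$ as $d\to\infty$. So there is a $d_0$ with $\det M(d,\tau)\ge \tfrac12\det(D)^2>0$ for all $d\ge d_0$ and all $\tau$, by continuity of the determinant. Note $\det D=K(0)(-K''(0))>0$ by non-degeneracy.

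\textbf{Lower bound, $d\in[1,d_0]$.} For $\tau\in[0,1]$, $M(d,\tau)$ is a convex-type combination: it is the covariance of $\sqrt{\tau}V+\sqrt{1-\tau}\,(\tilde V_1,\tilde V_2)$, where $V$ is the vector at $(x,y)$ and $\tilde V_1,\tilde V_2$ are independent copies of the marginal blocks. Equivalently,
\[ M(d,\tau)=\tau M(d,1)+(1-\tau)M(d,0). \]
Both matrices are positive definite: $M(d,1)$ by Lemma~\ref{l:nondegen}, and $M(d,0)=\mathrm{diag}(D,D)$ directly. So $M(d,\tau)\succeq \min(\lambda_{\min}M(d,1),\lambda_{\min}D)\,I$. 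The map $d\mapsto\lambda_{\min}M(d,1)$ is continuous and positive on the compact interval $[1,d_0]$, hence bounded below by a positive constant. This gives a uniform lower bound on $\det M$.

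The main (mild) obstacle is this compactness/continuity step for intermediate separations. The convexity identity above handles the $\tau$-dependence cleanly, and combining the two ranges of $d$ gives the constant $c$.
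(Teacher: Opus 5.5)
Your proof is correct. It shares the paper's overall structure: reduce the $t$-dependence to the endpoints, then get positivity for $x\neq y$ from Lemma~\ref{l:nondegen}, the limit $\det(D)^2$ at infinity from Lemma~\ref{l:m}, and compactness on intermediate separations. The difference is the key step for the $t$-dependence.

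The paper shows that $t\mapsto\det\Sigma_{x,y;t}$ is non-increasing, importing this from the proof of \cite[Lemma A.5]{bmr20}. This gives the sandwich $\det\Sigma_{x,y;1}\le\det\Sigma_{x,y;t}\le\det\Sigma_{x,y;0}$. The upper bound is then a constant, and the lower bound reduces to the single function $d(x-y)=\det\Sigma_{x,y;1}$.

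You instead use the affine identity $M(d,\tau)=\tau M(d,1)+(1-\tau)M(d,0)$ together with concavity of $\lambda_{\min}$. Your route is self-contained. It also directly yields a uniform lower bound on $\lambda_{\min}(\Sigma_{x,y;t})$, which is the form the paper actually uses later in the proof of Claim~\ref{c:h}, where $\lambda_{\min}^{-1/2}$ appears. The only cost is that you need a separate, trivial argument for the upper bound.

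Your identity in fact recovers the paper's monotonicity as well. Since $M(d,-\tau)=SM(d,\tau)S$ with $S=\mathrm{diag}(I,-I)$, every $M(d,\tau)$ with $\tau\in[-1,1]$ is positive definite, so $\tau\mapsto\log\det M(d,\tau)$ is defined there. It is concave on $[-1,1]$ by log-concavity of $\det$, and it is even. A concave even function is non-increasing on $[0,1]$, which is exactly the cited monotonicity.
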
 
\begin{proof}
Let $\mathbf K(x,y)$ be the $2\times 2$ covariance matrix for the Gaussian vectors $(f(x),f'(x))$ and $(f(y),f'(y))$. Then, recalling \eqref{covwitht}, if $x \in I_i \in \mathcal{C}_\ell$ and $y \in I_j \in \mathcal{C}_{\ell'}$ for some $\ell \neq \ell'$ we have
\[ \Sigma_{x,y;t}=\begin{pmatrix}\mathbf{K}(x,x) & t \mathbf{K}(x,y) \\ t\mathbf{K}(x,y) & \mathbf{K}(y,y)\end{pmatrix} \]
and otherwise 
\[ \Sigma_{x,y;t}=\Sigma_{x,y;1} = \begin{pmatrix}\mathbf{K}(x,x) & \mathbf{K}(x,y) \\ \mathbf{K}(x,y) & \mathbf{K}(y,y)\end{pmatrix} .\]
We claim that, in either case, $t \mapsto \textrm{det}( \Sigma_{x,y;t})$ is non-increasing, so that
  \[c=\det(\Sigma_{x,y;0})\geq \textrm{det}( \Sigma_{x,y;t})  \ge  \textrm{det}(\Sigma_{x,y;1} ) =: d(x-y) .  \]
  Indeed, in the latter case this is immediate, and in the former case it follows directly from the proof of Lemma A.5 in \cite{bmr20}. Further, $d(x)$ depends continuously on $t$, $K(x)$, $K'(x)$ and $K''(x)$, and so is a continuous function of $(x,t)$ outside a neighbourhood of $\{x=0\}$. Moreover, recalling Lemma \ref{l:m}, $d(x) \to \textrm{det}( \Sigma)^2$ as  $|x| \to \infty$, where $\Sigma$ is the covariance matrix of $(f(0),f'(0))$. Hence, since $d(x) > 0$ for every $x \neq 0$ by Lemma \ref{l:nondegen}, $d(x)$ is bounded below over $|x| \ge 1$ by compactness.
\end{proof}

We can now complete the proof of the weak splitting:

\begin{proof}[Proof of Proposition \ref{p:wsplit}]
Let $(I_i) \in \mathcal{Q}^k_{r,s}$ and $(\sigma_i) \in \{-,+\}$ be given. By Proposition \ref{p:inter}, and since $\prod_i \P[  \calP_{I_i}^{\sigma_i}  ] \geq P(r)^k$ by stationarity and monotonicity, we have
\[  \Big|  \frac{ \P[ \cap_i \calP^{\sigma_i}_{I_i} ] }{   \prod_i \P[  \calP_{I_i}^{\sigma_i}  ]  }  - 1 \Big| \le \frac{c  k^2 r^2 \bar{K}(s)  E }{ P(r)^k} \]
where 
\[ E :=  \sup_{i < j , x \in I_i, y \in I_j, t \in [0,1] }  p_t(x,y) .  \]
By \eqref{e:persplit}, and since $P(r) = G(r) / 2 < 1$, it remains to show that $E$ is uniformly bounded. 

We shall assume that $x \in \textrm{int}(I_i)$ and $y \in \textrm{int}(I_j)$ since the cases $x \in \partial I_i$ or $y \in \partial I_j$ are similar (and simpler). Note that $\tilde{\varphi}_t(x,y) = (2\pi)^{-2} (\textrm{det}( \Sigma_{x-y;t}) )^{-1/2}$ is uniformly bounded by Lemma \ref{l:gdb}.  By discarding the indicator, and it remains to show that
\[  \E \big[   |f_t''(x)| |f_t''(y)|  \big| f_t(x) = f_t(y) = f_t'(x) = f_t'(y) = 0 \big]   \]
is uniformly bounded. By the Cauchy-Schwarz inequality, the fact that $f_t$ is centred, and Gaussian regression, we have
\begin{align*}
  &\E \big[   |f_t''(x)| |f_t''(y)|  \big| f_t(x) = f_t(y) = f_t'(x) = f_t'(y) = 0 \big] \\
  & \qquad \le \max_{z = x,y}  \E \big[  f_t''(z)^2  \big|  f_t(x) = f_t(y) = f_t'(x) = f_t'(y) = 0  \big]  \\ 
  & \qquad \le \max_{z = x,y } \textrm{Var}[ f_t''(z) ]  =  \textrm{Var}[ f''(0) ]  .
  \end{align*}
  where the final equality is since $f_t$ is equal in law to $f$ when restricted to an interval $I_i$. \end{proof}

To prove the strong splitting we establish a more refined `self-bounding' property of the integrand in \eqref{e:inter}:

\begin{proposition}
\label{p:depin}
Under the assumptions of Proposition \ref{p:ssplit}, there exists a constant $c > 0$ such that: if $r \ge c$ and $s \ge cr$, then for every collection $(I_i)_{i \le k} \in \mathcal{Q}_{r,s}^k$, every $(\sigma_i)_i \in \{-,+\}$, and every $x \in I_i$, $y \in I_j$, $i \neq j$, and $t \in [0,1]$,
\[  | p_t(x,y) |  \le  c \P[A_t] ( \log(1/ \P[A_t] ) )^3 \]
where $p_t(x,y)$ and $A_t$ are defined as in Proposition \ref{p:inter}.
\end{proposition}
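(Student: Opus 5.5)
We will bound $p_t(x,y)$ by comparing the conditioned event $A_t^{xy}$, given the degenerate conditioning $f_t(x)=f_t(y)=f_t'(x)=f_t'(y)=0$, with the unconditioned event $A_t$. We treat the case $x\in\textrm{int}(I_i)$, $y\in\textrm{int}(I_j)$; the boundary cases are the same argument with fewer conditioned variables and give at most the powers of $\log(1/\P[A_t])$ found below. By Lemma \ref{l:gdb}, $\tilde\varphi_t(x,y)$ is bounded above, so it suffices to bound
\[ \E\big[|f_t''(x)||f_t''(y)|\id_{A_t^{xy}} \,\big|\, f_t(x)=f_t(y)=f_t'(x)=f_t'(y)=0\big]. \]

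\emph{Separating the second derivatives.} On $A_t^{xy}$, with $\sigma_i f_t(x)$ vanishing to second order at an interior minimum, we must have $\sigma_i f_t''(x)\ge 0$, and similarly at $y$. Set $M:=C\log(1/\P[A_t])$ and split according to whether $|f''_t(x)|,|f_t''(y)|\le M$. The conditional law of $(f''_t(x),f''_t(y))$ has bounded variance, so the contribution from $\max(|f_t''(x)|,|f_t''(y)|)>M$ is at most $e^{-cM^2}\le \P[A_t]^{2}$. On the complementary event the product of second derivatives is at most $M^2$, and it remains to show
\[ \P\big[A_t^{xy} \,\big|\, f_t(x)=f_t(y)=f_t'(x)=f_t'(y)=0\big] \le c\,\P[A_t]\,\log(1/\P[A_t]), \]
which accounts for the third logarithmic factor.

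\emph{Main step: removing the conditioning.} The conditional process is $f_t - h$, where $h$ is the Gaussian regression of $f_t$ onto the four conditioned variables. Conversely, $f_t$ itself equals the conditioned process plus an independent random field
\[ h_Z(z)=\sum Z_k\,\psi_k(z), \]
where $Z$ is the Gaussian vector $(f_t(x),f_t(y),f_t'(x),f_t'(y))$ and $\psi_k$ are the regression coefficient functions. The plan is to lower bound $\P[A_t]$ by restricting to a set of $Z$ on which $h_Z$ pushes the process in the favourable direction $\sigma$. Concretely, we look for a Gaussian-probability set $B$ of values $Z$ (for instance $\sigma_i f_t(x)\approx\eps$, $f_t'(x)\approx 0$, and similarly at $y$) such that $\sigma h_Z\ge 0$ on every interval $I_m$. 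Then monotonicity of persistence events gives
\[ \P[A_t]\ge \P[Z\in B]\cdot\P[A_t^{xy}\mid \cdot\,] \]
up to the local issue near $x$ and $y$. Near $x$, $h_Z$ contributes the quadratic shape $\sigma_i f_t''(x)(z-x)^2/2$, which must dominate the conditioned process; cutting off at $|f''|\le M$ and choosing $\eps\approx M^{-1}$, so that $\P[Z\in B]\gtrsim M^{-1}$ in each relevant coordinate, should give the $\log$ factor.

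The hard part is the sign of $h_Z$ far away. On intervals $I_m$ at distance $\ge s$, $h_Z$ is a combination of $tK(\cdot-x)$, $tK'(\cdot-x)$, etc., with coefficients of order $\eps$. It must not be negative relative to $\sigma_m$, and the signs $\sigma_m$ are arbitrary. This is exactly where Conditions \ref{c:ss} enter. Positivity of $K$ and $|K'|\le cK$ let us pick $Z$ so that the $K$-term dominates the $K'$-terms. However, $K>0$ pushes all intervals in the same direction. To handle mixed signs, we will instead absorb the far contribution: $|h_Z|\lesssim \eps\bar K(s)$ on far intervals, and \eqref{e:decay} with $s\ge cr$ compares this to the self-interaction $\min_{|z|\le r}K$ within $I_i$. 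We therefore shift levels by $O(\eps \bar K(s))$ and pay for the shift using a Cameron--Martin-type argument within each cluster, whose cost is a constant factor because $\bar K(s)\ll \min_{|z|\le r} K(z)$. Making this comparison uniform in $t$ and in the configuration is the step I expect to be the main obstacle.
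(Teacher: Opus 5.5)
Your outline has the same architecture as the paper's proof: truncate $|f_t''|$, write $f_t=g+\mu^Tv$ with $v=(f_t(x),f_t(y),f_t'(x),f_t'(y))$, move $v$ into a favourable small set and use monotonicity, then repair the far intervals by a Cameron--Martin shift justified by \eqref{e:decay}. However, the step that carries the argument is left open, and the version you sketch would not work as stated.

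The shift is applied to the \emph{conditioned} process $g$. So it must be an element $h$ of the Cameron--Martin space $H$ of $g$, with $h(x)=h(y)=h'(x)=h'(y)=0$ and $\|h\|_H$ bounded uniformly in $t$, $x$, $y$ and the configuration. A ``level shift by $O(\eps\bar K(s))$'' is not of this form. Nor can you shift ``within each cluster'' separately, since for $t>0$ the intervals are correlated and the conditioning on $v$ couples them. The missing ingredient is the paper's Claim~\ref{c:h}: take
\[ h=\mu^T\tilde w-\sum_{m\neq i,j}\sigma_m K(\cdot-d_m), \]
that is, the self-interaction bumps at the midpoints $d_m$ of the far intervals minus their regression onto $v$. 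Here $\tilde w$ collects the values and derivatives of $\sum_{m\ne i,j}\sigma_mK(\cdot-d_m)$ at $x,y$. Its two key properties are as follows.
\begin{itemize}
\item On $I_m$, $m\neq i,j$, the term $\sigma_mK(\cdot-d_m)\ge\min_{|z|\le r}K$ dominates both the regression correction and $\mu^Tw=O(\delta\bar K(s))$, by \eqref{e:decay}.
\item On $I_i$ and $I_j$ the correction is $O(\delta\bar K(s))$. It is absorbed by $\sigma_i\mu(z)^Tw\gtrsim\delta K(x-z)\ge\delta\min_{|q|\le r}K$, which is where the open set $\mathcal O$ built from $K>0$ and $\sup|K'|/K<\infty$ enters; this is your ``$K$-term dominates the $K'$-terms''.
\end{itemize}
Once $\sigma_m(\mu^Tw-\delta h)>0$ strictly on every $I_m$ for $w\in\delta\mathcal A$, monotonicity gives $\{g+\delta h\in A_t^{xy}\}\subset\{g+\mu^Tw\in A_t\}$. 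So the ``local issue near $x$ and $y$'' does not arise. Also, $\mu^TZ$ involves only $f_t$ and $f_t'$ at $x,y$, so it contains no $f_t''(x)(z-x)^2/2$ term; that quadratic shape belongs to $g$.

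Your logarithmic bookkeeping also does not reach the exponent $3$ in the statement. Write $L:=\log(1/\P[A_t])$.
\begin{itemize}
\item Truncating at $M=CL$ already costs $M^2\asymp L^2$. Gaussian tails only need $u\asymp L^{1/2}$, which costs a single factor $L$.
\item The de-conditioning cannot cost just one factor $L$. The vector $v$ is four-dimensional and must land in a set of diameter $\asymp\delta$ near the origin, which has probability $\asymp\delta^4$, since your per-coordinate factors multiply.
\item The Cameron--Martin bound $\P[g+\delta h\in A^{xy}_t]\ge e^{-c-c\delta T}\P[g\in A^{xy}_t]-e^{-T^2/2}$ forces $T\asymp L^{1/2}$, and hence $\delta\lesssim L^{-1/2}$ for the cost to stay bounded.
\end{itemize}
The paper takes $\delta=L^{-1/2}$, getting $L^2$ from $\delta^{-4}$ and one more $L$ from $u^2$. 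With your choices $M\asymp L$ and $\eps\asymp M^{-1}$ you would instead get $L^6$. Any fixed power would suffice for the later application, but the proposition asserts the cube.
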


\begin{proof}[Proof of Proposition \ref{p:ssplit} assuming Proposition \ref{p:depin}]
Let $(I_i) \in \mathcal{Q}^k_{r,s}$ and $(\sigma_i) \in \{-,+\}$ be given. Let $A_t$ be  defined as in Proposition \ref{p:inter}, and abbreviate $g(t) = \P[A_t]$, so that $g(1) =   \P[ \cap_i \calP^{\sigma_i}_{I_i} ]$ and $g(0) =\prod_i \P[  \calP_{I_i}^{\sigma_i}  ]$. We seek to bound $g(1)/g(0)$.

By Proposition \ref{p:inter}, 
\[   g(1) - g(0) = \int_0^1  g'(t) \, dt \]
where 
\[ g'(t) = \sum_{i <  j}  \sigma_i \sigma_j  \int \int K(x-y)  p_t(x,y) \, \mu_{I_i}(dx) \mu_{I_j}(dy)        .\]
By Proposition \ref{p:depin}, and since $r$ is sufficiently large, we have
\[ |g'(t)| \le  c k^2 r^2 \bar{K}(s) g(t) ( \log(1/g(t)))^3 .  \]
Abbreviating $\gamma = c k^2 r^2 \bar{K}(s)$ we can write the above as
 \[
 | (\log g(t) )' | \le \gamma  ( \log(1/g(t)))^3  .
 \] We introduce $u(t) := -\log g(t)$, and thus 
\[
|u'(t)|  
   \le \gamma\, u(t)^3 ,
\qquad t\in[0,1].
\]
Separating variables gives
\[
-\gamma\le \int_{u_0}^{u_1} \frac{du}{u^3} \le \gamma,
\]
where $u_0=u(0)$ and $u_1=u(1)$.  
Hence
\[
-\gamma\le-\frac{1}{2u_1^2} + \frac{1}{2u_0^2} \le \gamma \implies  \frac 1{\sqrt{\frac1{u_0^2}+2\gamma}}\leq u_1 \le  \frac 1{\sqrt{\frac1{u_0^2}-2\gamma}}.
\]
Taking $r$ and $s$ sufficiently large we can assume that 
$\gamma u_0^2$ is small, and thus we have  
\[
|u_1 - u_0| \le 2\gamma\, u_0^3 .
\]
Since $g(1)/g(0)=\exp(-(u_1-u_0))$, we obtain
\begin{equation}
\label{quot}
1 - 2\gamma \bigl(\log(1/g(0))\bigr)^3
\;\le\;
\frac{g(1)}{g(0)}
\;\le\;
1 + 2\gamma \bigl(\log(1/g(0))\bigr)^3 .
\end{equation}
Together with the fact that $g(0)\geq G(r)^k$, this completes the proof.
\end{proof}

We now give the proof of Proposition \ref{p:depin}, which is inspired by the proof of the similar `de-pinning' estimate \cite[Proposition 3.16]{mm25}.

\begin{proof}[Proof of Proposition \ref{p:depin}]
Again we assume that $x \in \textrm{int}(I_i)$ and $y \in \textrm{int}(I_j)$, $i \neq j$, since the other cases are similar (and simpler). 
As observed in the proof of Proposition \ref{p:wsplit}, $p_t(x,y)$ and $\tilde{\varphi}_t(x,y)$ are uniformly bounded. Hence it remains to prove that
\[ \E \big[   |f_t''(x)| |f_t''(y)|   \id_{A^{xy}_t} \big| f_t(x) = f_t(y) = f_t'(x) = f_t'(y) = 0 \big] \le   c \P[A_t] ( \log(1/ \P[A_t] ))^3 \]
for all $\P[A_t]$ is sufficiently small. 

Decomposing $1 = \id_{f_t''(x) \le u} + \id_{f''_t(x) > u} = \id_{f_t''(y) \le u} + \id_{f''_t(y) > u} $ we can bound, for any $u > 0$,
\begin{align}
\nonumber
&  \E \big[   |f_t''(x)| |f_t''(y)|  \id_{A^{xy}_t}  \big| f_t(x) = f_t(y) = f_t'(x) = f_t'(y) = 0 \big] \\
\label{e:b1} & \qquad \le u^2 \P \big[ A^{xy}_t  \big| f_t(x) = f_t(y) = f_t'(x) = f_t'(y) = 0 \big]  \\
\nonumber & \qquad \qquad + 2u  \max_{z = x,y}   \E \big[   |f_t''(z)| \id_{|f_t''(z)| > u} \big|   f_t(x) = f_t(y) = f_t'(x) = f_t'(y) = 0 \big] \\
\nonumber & \qquad \qquad \qquad + \max_{z = x,y} \E \big[   |f_t''(z)|^2 \id_{|f_t''(z)| > u}  \big|   f_t(x) = f_t(y) = f_t'(x) = f_t'(y) = 0 \big] .
\end{align}
By Gaussian regression, conditionally on $\{f_t(x) = f_t(y) = f_t'(x) = f_t'(y) = 0\}$ the variables $f''_t(z)$, $z = x,y$ are centred Gaussian with variance bounded by $\textrm{Var}(f''(0))$. Hence if $u \ge 1$ the second and third terms on the right-hand side of \eqref{e:b1} are at most  $c_1 e^{- c_2 u^2}$ for constants $c_1,c_2 > 0$. Setting $u  = \sqrt{(1/c_2) \log(1/ \P[A_t])}$, which satisfies $u \ge 1$ as long as $\P[A_t]$ is sufficiently small, these are bounded by $ c_1 \P[A_t]$. The first term on the right-hand side of \eqref{e:b1} is then bounded by
\[ (1/c_2) \log(1/ \P[A_t] ) \P \big[A^{xy}_t  \big| f_t(x) = f_t(y) = f_t'(x) = f_t'(y) = 0 \big]  \]
so it remains to prove that
\begin{equation}
\label{e:b2}
  \P \big[A^{xy}_t  \big| f_t(x) = f_t(y) = f_t'(x) = f_t'(y) = 0 \big] \le c_3   \P [ A_t  ]    (\log ( 1/  \P[A_t] ))^2 . 
  \end{equation}
  
Henceforth we drop $x$, $y$, and $t$ from our notation for simplicity. By Gaussian regression we can write the process $f_t$, conditionally on the vector $ v = (f_t(x) , f_t(y) , f_t'(x) , f_t'(y) )$, as
\[ f_t( \cdot ) \stackrel{d}{=} \mu(\cdot)^T  v + g(\cdot) \] 
where, for each $z \in \R$, $\mu(z)$ is a deterministic vector (written explicitly in \eqref{e:mu} below), and $g$ is a process distributed as $f_t$ conditionally on $ (f_t(x) , f_t(y) , f_t'(x) , f_t'(y) ) = 0$; in particular neither $\mu$ nor $g$ depend on $v$.

\smallskip
We make the following claim, which is the only part of the proof that uses the fact that Condition \ref{c:ss} holds. Recall that $x \in I_i$ and $y \in I_j$ for $i\neq j\in \{1,.., k\}$, and that $\sigma_i, \sigma_j \in \{-,+\}$. Let $H$ denote the RKHS associated to $g$.

\begin{claim}
\label{c:h}
There exist $ c > 0$ and an open set $\mathcal{O} \subset \R^2$, which depend only on $f$ and $k$, and a function $h \in H$, such that:
\begin{enumerate}
\item $h(x) = h(y) = h'(x) = h'(y) = 0$ and $\|h \|_{H} \le c$.
\item For every $w = (w_1,w_2,w_3,w_4)$ such that $(w_1,w_3) \in \sigma_i \mathcal{O}$ and $(w_2,w_4) \in \sigma_j \mathcal{O}$, and every $1 \le m \le k$ and $z \in I_m$,
\begin{equation}
\label{e:c2}
\mu(z)^T w \neq h(z) \qquad \text{and} \qquad \textrm{sgn} \big( \mu(z)^T w  -  h(z) \big) = \sigma_m  . 
\end{equation} 
\end{enumerate}
\end{claim}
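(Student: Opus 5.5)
The plan is to build $h$ from reproducing kernels of $f_t$ placed in the intervals $I_m$, $m \neq i,j$. We then take $\mathcal{O}$ to be a small neighbourhood of a fixed point, chosen so that the regression term $\mu(z)^T w$ has the correct sign on $I_i$ and $I_j$. The point is that $\mu(z)^T w = \E[f_t(z) \mid v = w]$ is an explicit combination of the covariances $\textrm{Cov}(f_t(z), f_t(x))$, $\textrm{Cov}(f_t(z), f_t'(x))$ and their analogues at $y$. By \eqref{covwitht} these are $K$, $K'$ up to a factor $t$ or $1$. Regression on $v$ involves inverting $\Sigma_{x,y;t}$, which is uniformly well-conditioned by Lemma \ref{l:gdb}.

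\textbf{Step 1: the regression term on $I_i$, $I_j$ and on far intervals.} First write $\mu(z)$ explicitly via $\Sigma_{x,y;t}^{-1}$. Since $|x-y| \ge s$ and $K, K', K'' \to 0$ (Lemma \ref{l:m}), we have
\[ \mu(z)^T w = w_1 a(z) + w_3 b(z) + (\text{cross terms of order } \bar K(s)), \]
where $a(z)$ and $b(z)$ come from the single-point regression on $(f(x), f'(x))$ and are comparable to $K(z-x)$ and $K'(z-x)$. Next take $\mathcal{O}$ to be a small neighbourhood of $(1,0)$, so that $w_1 \approx \sigma_i$ and $|w_3| \ll 1$. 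Using $K > 0$ and $\sup|K'|/K < \infty$, we get $\sigma_i \mu(z)^T w \ge c K(z-x) - C\bar K(s)$ for $z \in I_i$. Here $|z - x| \le r$, so \eqref{e:decay} gives $K(z-x) \ge c_1 \bar K(c_2 r)$. Since $s \ge cr$ with $c$ large, this dominates $\bar K(s)$. The same holds on $I_j$ with $\sigma_j$. On any other $I_m$, every point is at distance $\ge s$ from $x$ and $y$, so $|\mu(z)^T w| \le C \bar K(s)$.

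\textbf{Step 2: construction of $h$.} Pick any $x_m \in I_m$ for each $m \ne i,j$, and set
\[ \tilde h(\cdot) = -\eps \sum_{m \ne i,j} \sigma_m \, \textrm{Cov}\big(f_t(\cdot), f_t(x_m)\big), \]
with a fixed small $\eps > 0$. Define $h = \tilde h - (\text{its regression onto } v)$; that is, $h$ is the covariance of $g$ with $-\eps\sum_m \sigma_m f_t(x_m)$. Then $h \in H$, it satisfies $h(x) = h(y) = h'(x) = h'(y) = 0$, and $\|h\|_H \le \eps k\, \textrm{Var}[f(0)]^{1/2}$, which proves item (1). The correction term is of size $O(\eps \bar K(s))$ uniformly, again by Lemma \ref{l:gdb} and the separation.

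On $I_m$ we have $\textrm{Cov}(f_t(z), f_t(x_m)) = K(z - x_m) \ge c_1\bar K(c_2 r)$, since $I_m$ is a single cluster and \eqref{e:decay} applies. Cross terms from other $m' \neq m$ are $O(\bar K(s))$. Hence
\[ \sigma_m(\mu(z)^T w - h(z)) \ge \eps c_1 \bar K(c_2 r) - C(1+\eps k)\bar K(s) > 0. \]
On $I_i$ and $I_j$, $|h| = O(\eps \bar K(s))$, which is absorbed by Step 1. This gives \eqref{e:c2}.

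\textbf{Main obstacle.} The delicate point is the uniform comparison $\bar K(s) \ll \min_{|u| \le r} K(u)$, with constants independent of $t$, $r$, $s$ and the configuration. It requires choosing $c_2$ (from \eqref{e:decay} with $c_1$ large) and then the constant $c$ in "$s \ge cr$" so that $\bar K(s) \le \bar K(c_2 r)$ beats all accumulated constants. A second delicate point is controlling the $K'$-term in $\mu$ near $x$, which is exactly where $\sup|K'|/K < \infty$ is needed. I would first verify the single-cluster regression formula carefully. For the cases where $x$ or $y$ lies in $\partial I_i$, the derivative coordinates are simply dropped and the argument simplifies.
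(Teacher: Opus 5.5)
Your proposal is correct and follows essentially the same route as the paper. As in the paper, you take $\mathcal{O}$ to be a bounded set on which $w_1K(\cdot)/K(0)+w_3K'(\cdot)/(-K''(0))$ dominates $K(\cdot)$, using $K>0$ and $\sup|K'|/K<\infty$. You take $h$ to be the projection onto $H$ of $-\sum_{m\neq i,j}\sigma_m$ times a covariance kernel centred in $I_m$, and you apply \eqref{e:decay} with a large $c_1$ so that it beats the $O(\bar K(s))$ cross terms.

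The differences are cosmetic. You use $\textrm{Cov}(f_t(\cdot),f_t(x_m))$ rather than the paper's $K(\cdot-d_m)$; this makes $h\in H$ and $\|h\|_H\le \eps k K(0)^{1/2}$ immediate, where the paper states its norm bound more loosely. Your small factor $\eps$ is harmless but unnecessary.
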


Let us conclude the proof assuming this claim. Let $\mathcal{O} \subset \R^2$ be as in the claim and define 
\[ \mathcal{A} = \{ (w_1,w_2,w_3,w_4) : (w_1,w_3) \in \sigma_i \mathcal{O} \text{ and } (w_2,w_4) \in \sigma_j \mathcal{O} \} .  \]
Since $\mathcal{O}$ is open, and $v$ is uniformly non-degenerate by Lemma \ref{l:gdb}, there is a $c_1 > 0$ such that, for every $\delta \in (0,1)$, $\P[v \in \delta \mathcal{A}] \ge c_1 \delta^4$.  For $w \in \R^4$, define 
\[ \Psi(w) := \P[ f_t \in A_t | v = w] = \P[g +  \mu^T w \in A_t] .\]
We claim that, for every $\delta, T > 0$,
\begin{equation}
\label{e:b3}
\inf_{w \in \delta \mathcal{A}} \Psi(w) \ge e^{-c_2/2}  e^{-c_2 \delta T} \Psi(0) - e^{-T^2/2} ,
\end{equation}
where $c_2 > 0$ is the constant from Claim \eqref{c:h}. Assuming this, and since also
\[ \inf_{w \in \delta \mathcal{A}} \Psi(w) \le \P[ A_t | v  \in \delta \mathcal{A} ] \le  \frac{  \P [A_t] }{ \P[ v  \in \delta \mathcal{A}  ] } , \]
we deduce that, for every $T > 0$ and $\delta \in (0,1)$,
\[ \Psi(0) \le e^{c_2/2}  e^{c_2 \delta T} \big( c_1 \delta^{-4} \P[A_t] + e^{-T^2/2}  \big).\]
Setting $T = \sqrt{2} \sqrt{-\log \P[A_t]}$ and $\delta = 1 / \sqrt{-\log \P[A_t]}$, which satisfies $\delta \in (0,1)$ sufficiently small as long as $\P[A_t]$ is sufficiently small, yields 
\[ \Psi(0) \le e^{c_3} \P[A_t] ( c_1 (\log (1/\P[A_t] ) )^2 + 1) \]
which proves \eqref{e:b2} and hence Proposition \ref{p:depin}, subject to \eqref{e:b3} (and Claim \ref{c:h}).

To establish \eqref{e:b3}, let $h$ be as in Claim \ref{c:h}.  By the second property of Claim \ref{c:h}, for every $\delta > 0$ and $w \in \delta \mathcal{A}$,
\[  \{ g  + \delta h  \in A_t  \}  \quad \Longrightarrow \quad   \{g +  \mu^T w  \in A_t\}  . \]
Therefore, for every $\delta > 0$ and $w \in \delta \mathcal{A}$,
\[ \Psi(w) = \P[g +  \mu^T w \in A_t] \ge \P[g + \delta h \in A_t] .\]
Let $\varphi$ be the standard Gaussian density. We make use of the following bound, valid for arbitrary $F : \R \to  [0,1]$ and $T>0, \delta \in (0,1)$ sufficiently small
\begin{align}
\nonumber \E[F(Z+\delta)] & = \int F(z+\delta) \varphi(z) \,dz = \int F(z) \varphi(z-\delta)\, dz = e^{-\delta^2/2}  \int e^{ z \delta}  F(z) \varphi(z) dz \\
\nonumber & \ge e^{-\delta/2} e^{-T\delta}  \int_{-T}^\infty  F(z) \varphi(z)dz  \\
\label{e:z} & \ge  e^{-\delta/2} e^{-T\delta}  \E[F(z)] -     \P[Z \le - T]  .
 \end{align}
We decompose $g \stackrel{d}{=} ( Z / \|h\|_{H} )  h + \tilde{g}$, where $Z$ is a standard Gaussian random variable, and $\tilde{g}$ is independent of $Z$, and write
\[ \P[g + \delta h \in A_t]  =   \P \big[  (( Z + \delta \|h\|_{H} ) / \|h\|_{H} )  h + \tilde{g}   \in A_t \big] = \E_{\tilde g}\E_Z \big[ F_{\tilde{g}}(Z +  \delta \|h \|_{H} ) \big| \tilde{g}  \big]   \]
where $F_{\tilde{g}}$ is the indicator function
 \[ F_{\tilde{g}}(z) := \id \{ ( z/ \|h\|_{H}) h + \tilde{g} \in A_t \} .\]
Recalling $ \|h \|_{H}  \le c_2$, we can choose  $\delta \in (0,1)$ sufficiently small such that $\delta \|h \|_{H}\in (0,1)$, applying \eqref{e:z} gives that
 \[ \P[g + \delta h \in A_t] \ge  e^{-c_2/2 -T\delta c_2}  \P [g \in A_t ] -     \P[Z \le - T]   . \]
By the standard tail bound $\P[Z \le - T]  \le e^{-T^2/2}$, this concludes the proof of \eqref{e:b3}
\end{proof}

\begin{proof}[Proof of Claim \ref{c:h}]
We begin by defining suitable $\mathcal O$ and $h$. First, since $K > 0$ and $|K'(x)| / K(x) < c_1$, we may choose a bounded open set $\mathcal{O} \subset \R^2$ satisfying, for every $(w_1,w_3) \in \mathcal{O}$ and $x \in \R$,
\begin{equation}
\label{e:w}
\frac{ w_1 K(x) }{K(0)}   +  \frac{ w_3 K'(x) }{-K''(0)}  >   K(x)/ 2 .
  \end{equation}
Next, for $1 \le m \le k$, let $d_m \in I_m$ denote the midpoint of the interval $I_m$, and define 
\[ \tilde{w} =   \sum_{m \neq i,j} \sigma_m   \big(  K(x-d_m) , K(y-d_m), K'(x - d_m), K'(y-d_m) \big)  \]
and
  \begin{equation}\label{defineh} h(\cdot) =     \mu(\cdot)^T \tilde{w}  - \sum_{m \neq i,j}  \sigma_m K(\cdot - d_m)     . \end{equation}
\noindent Let us verify the two properties in Claim \ref{c:h}. In the following the constants $c > 0$ may change from line to line but depends only on $f$ and $k$.
 
\textbf{(1).} For any $w = (w_1,w_2,w_3,w_4)$, the function $p(z) = \mu(z)^T w $ satisfies $p(x) = w_1$, $p(y) = w_2$, $p'(x) = w_3$, and $p'(y) = w_4$, since  by definition $p(z) = \E[ f_t | v = w ]$. Hence $h(x) = h(y) = h'(x) = h'(y) = 0$ by construction. To bound $\|h\|_H$ we first observe that 
\begin{align*}
 \| \mu(z)^T \tilde{w} \|_H  = \| \Sigma_{x,y,t}^{-1/2}  \tilde{w} \|_{L^2}   \le \lambda_{\min}^{-1/2} \| \tilde{w} \|_{L^2}     \le c   \lambda_{\min}^{-1/2} \max_{m\neq i, j}\max_{z = x,y} \big\{   |K(z-d_m)| ,  |K'(z - d_m)| \big\}    
  \end{align*}
where recall that $ \Sigma_{x,y,t}$ denotes the covariance matrix of $v = (f_t(x), f_t(y), f'_t(x), f'_t(y))$, and $ \lambda_{\min}$ is its minimum eigenvalue. Since $v$ is uniformly non-degenerate on $|x-y| \ge 1$ and $t \in [0,1]$ by Lemmas \ref{l:gdb} and \ref{l:m}, we have that $\| \mu(z)^T \tilde{w} \|_H < c$. Hence, by the triangle inequality
\[ \| h \|_{H} \le   \| \mu(z)^T \tilde{w} \|_{H} + (k-2) \| K \|_H   < c .  \]
 
\textbf{(2).} By Gaussian regression we may write $\mu(z)$  as
\begin{equation}
\label{e:mu}
\mu(z) = u(z)^T \Sigma^{-1}
\end{equation}
where
\[ u(z) =  \big(  t^{(i)} K(x-z),  t^{(j)} K(y-z),  t^{(i)} K'(x-z),  t^{(j)} K'(y-z) \big ) \] where, for $1 \le m \le k$, we define $t^{(m)} = t$ if $z \in I_{m'}$ for $m \neq m'$, and $ t^{(m)} = 1$ otherwise. 
And the covariance matrix $\Sigma$ is 
\[ \Sigma = \Sigma_{x,y;t} =  \mtrfor{\rowfor{K(0)}{t K(x-y)}{0}{-t K'(x-y)}}{\rowfor{t K(x-y)}{K(0)}{t K'(x-y)}{0}}{\rowfor{0}{t K'(x-y)}{-K''(0)}{-t K''(x-y)}}{\rowfor{-t K'(x-y)}{0}{-t K''(x-y)}{-K''(0)}} . \]

 By Lemma \ref{l:m}, the matrix $\Sigma^{-1}$ converges as $|x - y| \to \infty$ to the diagonal matrix with entries $(1/K(0), 1/K(0), -1/K''(0), -1/K''(0))$. In particular
 \begin{align}
\label{e:mubound} &  \mu(z)^T w   = \frac{w_1  t^{(i)} K(x-z)}{K(0)} +  \frac{w_2  t^{(j)} K(y-z)}{K(0)}  + \frac{ w_3  t^{(i)} K'(x-z)}{-K''(0)} + \frac{w_4   t^{(j)}K'(y-z)}{-K''(0)}  \\
 \nonumber &   \quad + o_{|x-y| \to \infty}(1) \times  \|w\|_\infty   \max\big\{ t^{(i)} |K(x-z)|, t^{(i)} |K'(x-z)| ,t^{(j)} |K(y-z)|, t^{(j)} |K'(y-z)|   \big\}  
 \end{align}
  uniformly over  $w = (w_1,w_2,w_3,w_4)$ and $t \in [0,1]$.
  
Recall that we seek to prove that $ \textrm{sgn} \big( \mu(z)^T w  -  h(z) \big) = \sigma_m$ for all $z \in I_m$, $1 \le m \le k$. There are three cases: $z \in I_i$, $z \in I_j$, and $z \in \cup_{m \neq i,j} I_m$.  Let us consider the first case, with the second case identical. Without loss of generality we may assume that $\sigma_i = +$, so we need to verify $ \mu(z)^T w > h(z)$ for $z \in  I_i$.  By \eqref{defineh} and \eqref{e:mubound}, and recalling that $\sup_x |K'(x)| / K(x) < \infty$, we have 
 \[ h(z) \le c \max_{q \in T} K(q)  \le  c  \bar{K}(s)  \]
 where $T = \cup_{m \neq i,j} \{ x-d_m, y-d_m, z-d_m\}$. Similarly, recalling also \eqref{e:w} and that $\mathcal{O}$ is bounded,
 \begin{align*}
 \mu(z)^T w & \ge    \Big( \frac{w_1 K(x-z)}{K(0)} + \frac{ w_3 K'(x-z)}{-K''(0)} \Big) -  c K(y-z) - o_{|x-y| \to \infty}( 1) K(x-z)   \\
 &  \ge K(x-z)/2  -  c K(y-z) - o_{|x-y| \to \infty}( 1) K(x-z)   \\
 & \ge \min_{|x| \le r} K(x) / 3 -   c \bar{K}(s) 
 \end{align*}
if $|x-y| \ge s$ is sufficiently large. Combining with \eqref{e:decay} we see that $h(z) < \mu(z)^T w$ as long as $r \ge 1$ and $s/r \ge c$ is sufficiently large. 
   
 We turn to the remaining case. Suppose $z \in I_m$, $m \neq i,j$. Without loss of generality assume that $\sigma_m = -$, so we need to verify that $h(z) > \mu(z)^T w$ for $z \in I_m$. Similarly to the previous case,
 \[ h(z) \ge   K(z - d_m) - c  \max_{q \in T'}  K(q)    \ge    \min_{|x| \le r} K(x) - c \bar{K}(s)    \]
where $T' = T \setminus \{z - d_m\}$, and 
\[   \mu(z)^T w  \le   c     \max_{q = x-z,y-z}K(q)  \le c \bar{K}(s) ,   \]
and again we conclude using \eqref{e:decay}.
\end{proof}

\subsection{Decoupling for clustered intervals}
\label{ss:cd}
We now prove weaker decoupling results for clusterings of intervals. Let $\bar{\mathcal{Q}}^k_{r,\bar{r},s}$ denote the set of clusterings $(\mathcal{C}_\ell)_{\ell \ge 1}$, where each cluster is a contiguous block of intervals in the left-to-right ordering, and such that:
\begin{itemize}
   \item The collection $(\mathcal{C}_\ell)_\ell$ contains at most $k$ intervals;
   \item Every interval in $(\mathcal{C}_\ell)_\ell$ has length at most $r$;
    \item For every $\ell \ge 1$, $\cup_{I_i \in \mathcal{C}_\ell} I_i$ is contained in an interval of length at most $\bar{r}$;
    \item For every $\ell \neq \ell'$, and every $I_i \in \mathcal{C}_\ell$ and $I_j \in \mathcal{C}_{\ell'}$, the intervals $I_i$ and $I_j$ are separated by a distance at least $s$.
\end{itemize}
Then we have the following analogues of Propositions \ref{p:wsplit} and \ref{p:ssplit}:

\begin{proposition}[Clustered decoupling, weak version]
\label{p:cwsplit}
Suppose $f$ is as in Proposition \ref{p:wsplit}. Then for every $k \ge 2$ there exists a $c > 0$ such that for every $r,\bar{r},s \ge 1$, and every  $(\mathcal{C}_\ell)_{\ell \ge 1} \in \bar{\mathcal{Q}}^k_{r,\bar{r},s}$,
\[  \P[ \cap_i \mathcal{G}_{I_i} ] \le  \prod_{\ell \ge 1} \P[ \cap_{I_i \in \mathcal{C}_\ell}  \mathcal{G}_{I_i} ] + c  r^2  \bar{K}(s)  .\]
\end{proposition}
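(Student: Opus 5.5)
The plan is to follow the proof of Proposition \ref{p:wsplit} closely, using the general (non-simple) clustering version of the interpolation formula, Proposition \ref{p:inter}. The bound is additive rather than multiplicative, which is why no division by gap probabilities appears.

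First we reduce to persistence events. Since each $\mathcal{G}_{I_i}$ is the disjoint union of $\mathcal{P}^+_{I_i}$ and $\mathcal{P}^-_{I_i}$, we can write
\[ \P[\cap_i \mathcal{G}_{I_i}] = \sum_{(\sigma_i)} \P[\cap_i \mathcal{P}^{\sigma_i}_{I_i}] . \]
The same decomposition, applied within each cluster, gives
\[ \prod_\ell \P[\cap_{I_i \in \mathcal{C}_\ell} \mathcal{G}_{I_i}] = \sum_{(\sigma_i)} \prod_\ell \P[\cap_{I_i \in \mathcal{C}_\ell} \mathcal{P}^{\sigma_i}_{I_i}] , \]
where the product over $\ell$ of sums over signs expands as a single sum over global sign assignments. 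There are at most $2^k$ sign assignments. It therefore suffices to show, for each fixed $(\sigma_i)$, that
\[ \Big| \P[\cap_i \mathcal{P}^{\sigma_i}_{I_i}] - \prod_\ell \P[\cap_{I_i \in \mathcal{C}_\ell} \mathcal{P}^{\sigma_i}_{I_i}] \Big| \le c r^2 \bar K(s) . \]
This is in fact a two-sided bound, stronger than what is claimed.

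Next we apply \eqref{e:intercon} to the clustering $(\mathcal{C}_\ell)$. The right-hand side is a sum over pairs of intervals $I_i \in \mathcal{C}_\ell$, $I_j \in \mathcal{C}_{\ell'}$ with $\ell \ne \ell'$, of which there are at most $k^2$. For each pair we have $|K(x,y)| = |K(x-y)| \le \bar K(s)$, since such intervals are separated by at least $s$. Each measure $\mu_{I_i}$ has total mass at most $r+2 \le 3r$. Hence the right-hand side is bounded by $9k^2 r^2 \bar K(s) \sup p_t(x,y)$, where the supremum runs over $x, y$ in distinct clusters and $t \in [0,1]$.

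It remains to bound $p_t(x,y)$ uniformly, exactly as in the proof of Proposition \ref{p:wsplit}. We drop the indicator $\id_{A^{xy}_t}$. Cauchy--Schwarz and Gaussian regression then bound the conditional expectation of $|f''_t(x)||f''_t(y)|$ by $\mathrm{Var}[f''(0)]$, using that $f_t$ restricted to a single cluster has the law of $f$. The density factor $\tilde\varphi_t(x,y)$ is controlled by Lemma \ref{l:gdb}, which needs $|x-y| \ge 1$; this holds since $s \ge 1$. The boundary terms, where $x \in \partial I_i$ or $y \in \partial I_j$, are handled in the same way with fewer conditioned variables.

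The one point to check is that Lemma \ref{l:gdb} and the covariance structure \eqref{covwitht} remain valid for general clusterings. For points in distinct clusters the covariance of $(f_t(x),f_t(y),f'_t(x),f'_t(y))$ has exactly the block form with factor $t$ used in that lemma, so the argument carries over verbatim. The main (mild) obstacle is therefore only the verification that the clustered interpolation formula applies as stated. This is supplied by Proposition \ref{p:inter}, which was proved for arbitrary clusterings.
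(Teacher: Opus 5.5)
Your proposal is correct and is essentially the paper's proof: apply the clustered interpolation formula \eqref{e:intercon}, bound $|K(x-y)|\le \bar K(s)$ and the masses of $\mu_{I_i}$ by $O(r)$, bound $p_t(x,y)$ uniformly exactly as in the proof of Proposition \ref{p:wsplit}, and sum over the at most $2^k$ sign patterns using that $\mathcal{G}_I$ is the disjoint union of $\mathcal{P}^+_I$ and $\mathcal{P}^-_I$. The paper likewise derives the two-sided bound for each sign pattern. Your added checks make explicit what the paper leaves implicit: the bound $r+2\le 3r$, the condition $|x-y|\ge s\ge 1$ needed for Lemma \ref{l:gdb}, and the expansion of the product over clusters into a sum over global sign assignments.
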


\begin{proposition}[Clustered decoupling, strong version]
\label{p:cssplit}
Suppose $f$ is as in Proposition \ref{p:ssplit}. Then for every $k \ge 2$ there exists a $c > 0$ such that for every $ \bar{r} \ge r \ge c$, $s \ge c \bar{r}$, and $(\mathcal{C}_\ell)_{\ell \ge 1} \in \bar{\mathcal{Q}}^k_{r,\bar{r},s}$,
\[  \P[ \cap_i \mathcal{G}_{I_i} ] \le  c \Big(\prod_{\ell \ge 1} \P[ \cap_{I_i \in \mathcal{C}_\ell}  \mathcal{G}_{I_i} ]   \Big)^{1 - c r^2 \bar{K}(s)   } . \]
\end{proposition}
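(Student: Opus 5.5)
We follow the proof of Proposition \ref{p:ssplit}, now applied to the clustered interpolation of Proposition \ref{p:inter}. First reduce to persistence events. Since each $\mathcal{G}_{I_i}$ is the disjoint union of $\calP^+_{I_i}$ and $\calP^-_{I_i}$, we have $\P[\cap_i \mathcal{G}_{I_i}] = \sum_{(\sigma_i)} \P[\cap_i \calP^{\sigma_i}_{I_i}]$, and this sum has at most $2^k$ terms. It therefore suffices to show, for each fixed sign pattern,
\[ \P[\cap_i \calP^{\sigma_i}_{I_i}] \le c\Big(\prod_\ell \P[\cap_{I_i\in\mathcal{C}_\ell}\calP^{\sigma_i}_{I_i}]\Big)^{1-c\gamma}, \qquad \gamma := r^2\bar K(s). \]
Indeed, the right-hand side is at most $c\big(\prod_\ell \P[\cap_{I_i \in \mathcal{C}_\ell}\mathcal{G}_{I_i}]\big)^{1-c\gamma}$ by monotonicity, and summing over sign patterns only costs a factor $2^k$.

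Fix the signs, and let $g(t)=\P[A_t]$ with $A_t$ as in Proposition \ref{p:inter} for the given clustering. Then $g(0)$ is the product over clusters and $g(1)$ is the joint probability. By \eqref{e:inter},
\[ |g'(t)| \le c k^2 r^2 \bar K(s) \sup |p_t(x,y)|, \]
where the supremum is over $x,y$ lying in intervals of different clusters. The key step is a clustered analogue of Proposition \ref{p:depin}:
\[ |p_t(x,y)| \le c\, g(t)\big(\log(1/g(t))\big)^3 \quad\text{whenever } g(t)\le 1/2. \]
Given this, set $u=-\log g$. We get $|u'|\le c\gamma u^3$, and integrating gives $|1/u_1^2-1/u_0^2|\le 2c\gamma$. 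We only need the upper bound on $g(1)$, so we need a lower bound on $u_1$: $u_1 \ge (1/u_0^2+2c\gamma)^{-1/2}$. This is where the argument departs from the simple case. We cannot assume $\gamma u_0^2$ is small, since a cluster product may be tiny. We therefore use the weaker self-bound $|p_t|\le c\,g(t)\log(1/g(t))^{a}$ only when it yields a linear inequality. Concretely, I would aim for the bound with exponent $1$ in $u$, i.e.\ $|u'|\le c\gamma u$, possibly up to an additive constant. Integrating that gives $u_1\ge u_0(1-c\gamma)-c$, which is exactly the claimed form $g(1)\le c\,g(0)^{1-c\gamma}$. So the real target is $|p_t(x,y)|\le c\,g(t)\log(1/g(t))$, or more modestly $|p_t|\le c\,g(t)^{1-\eta}$ with $\eta\lesssim\gamma$. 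If the cubic log factor cannot be avoided, I would instead absorb it: $u^3 \le \epsilon u^{1}\cdot u^2$ is not linear, so a fallback is to split at the level $u\le \gamma^{-1/2}$, where the cubic ODE behaves well, and handle the complementary regime via Hölder's inequality, interpolating $g(1)\le g(0)^{1-c\gamma}$ directly through Gaussian change of measure.

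The de-pinning estimate is carried out as in the proof of Proposition \ref{p:depin}. The factor $|f''_t(x)f''_t(y)|$ is truncated at $u\asymp\sqrt{\log(1/g)}$. This reduces matters to bounding $\P[A^{xy}_t \mid v=0]$ by $g(t)$ times logarithmic factors, where $v=(f_t(x),f_t(y),f_t'(x),f_t'(y))$. That bound comes from a Cameron--Martin shift $h$ with bounded RKHS norm, vanishing to first order at $x,y$, for which $\mu^T w - h$ has sign $\sigma_m$ on every $I_m$ whenever $w\in\delta\mathcal{A}$.

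\textbf{Main obstacle: the analogue of Claim \ref{c:h}.} In the simple case, positivity of $K$ on scale $r$, via \eqref{e:decay}, dominated the $O(\bar K(s))$ errors on each single interval. Now the intervals in the cluster containing $x$ may lie at distance up to $\bar r$ from $x$, and $\mu(z)^T w$ is large only near $x$. My first attempt would be to define $h$ as in \eqref{defineh} but subtract $\sigma_m K(\cdot-d_m)$ for every interval $I_m$ other than $I_i,I_j$, including those in the same clusters as $x$ and $y$. One then checks positivity using $\min_{|z|\le\bar r}K(z)\ge c_1\bar K(c_2\bar r)$ together with $s\ge c\bar r$. This is exactly why the hypothesis involves $\bar r$. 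Some cross terms between intervals within a cluster then no longer have a definite sign relative to $\bar K(s)$; I expect controlling these to be the delicate part.
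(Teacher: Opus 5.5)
\textbf{There is a genuine gap, exactly where you flag it.} Your reduction to persistence events and your final ODE step are fine. But you never establish the self-bound $|p_t(x,y)|\le c\,g(t)\log(1/g(t))$, and the route you propose cannot give it.

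In the de-pinning argument of Proposition~\ref{p:depin}, the shift $\delta h$ is needed because, for mixed signs, $\mu^Tw$ has the wrong sign on some intervals. Paying for that shift via Cameron--Martin forces $\delta\asymp(\log(1/g))^{-1/2}$, and then $\P[v\in\delta\mathcal{A}]\gtrsim\delta^4$ costs a factor $(\log(1/g))^2$. So any completion of your analogue of Claim~\ref{c:h} brings you back to $|u'|\le c\gamma u^3$. That only gives $g(1)\le g(0)^{(1+2c\gamma u_0^2)^{-1/2}}$, not $g(0)^{1-c\gamma}$. The fallbacks you list (splitting at $u\le\gamma^{-1/2}$, H\"older, change of measure) are not developed into anything that removes this loss.

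Worse, that analogue of Claim~\ref{c:h} cannot hold uniformly over $\bar{\mathcal{Q}}^k_{r,\bar r,s}$. Intervals within a cluster need only have disjoint closures. Suppose $x\in I_i$ lies within distance $d$ of an interval $I_m$ of the same cluster with $\sigma_m=-\sigma_i$. Then $\mu^Tw-h$ must change sign within distance $O(d)$ of $x$. At the same time $h(x)=h'(x)=0$, $|\mu(x)^Tw|=|w_1|$ is of order one, and second derivatives are controlled by $|w|+\|h\|_H$. This forces $\|h\|_H\gtrsim d^{-2}$.

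\textbf{The missing idea is to eliminate mixed signs rather than handle them.} Since $K>0$, Slepian's lemma (comparing $f$ with the process equal to $\sigma_i f$ on $I_i$) gives $\P[\cap_i\calP^{\sigma_i}_{I_i}]\le\P[\cap_i\calP^{+}_{I_i}]$ for every sign pattern. Hence $\P[\cap_i\mathcal{G}_{I_i}]\le 2^k\P[\cap_i\calP^+_{I_i}]$. Trivially also $\prod_\ell\P[\cap_{I_i\in\mathcal{C}_\ell}\calP^+_{I_i}]\le\prod_\ell\P[\cap_{I_i\in\mathcal{C}_\ell}\mathcal{G}_{I_i}]$. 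So a per-pattern bound with each pattern's own cluster product is never needed.

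For the all-plus pattern, $A_t$ is increasing and no Cameron--Martin shift is required. Take $w$ in the \emph{fixed} open set $\mathcal{A}=\{w:(w_1,w_3),(w_2,w_4)\in\mathcal{O}\}$. Then $\mu(z)^Tw>0$ on every $I_m$:
\begin{itemize}
\item the main terms of the regression expansion are positive by \eqref{e:w};
\item the $o(1)$ error is dominated using $|K'|\le cK$, \eqref{e:decay2} at scale $\bar r$, and $s\ge c\bar r$. This is where $\bar r$ enters, as you guessed.
\end{itemize}
Monotonicity then gives
\[
\P[A^{xy}_t\mid v=0]\le\inf_{w\in\mathcal{A}}\P[A^{xy}_t\mid v=w]\le\P[A_t]/\P[v\in\mathcal{A}]\le c\,\P[A_t],
\]
with no logarithmic loss. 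The only log factor comes from truncating $|f_t''(x)f_t''(y)|$ at level $\asymp\sqrt{\log(1/g)}$. This is precisely your target $|p_t|\le c\,g\log(1/g)$, and your linear inequality $|u'|\le c\gamma u$ then finishes the proof.
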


\begin{remark}
In fact, compared to Proposition \ref{p:ssplit}, we may relax the conditions in Proposition \ref{p:cssplit} slightly by replacing \eqref{e:decay} with the weaker assumption that, for some $c_1 > 0$,
\begin{equation}
\label{e:decay2}
\min_{|x| \le r} K(x) \ge \bar{K}(c_1 r) / c_1 .
\end{equation}
\end{remark}

\begin{proof}[Proof of Proposition \ref{p:cwsplit}]
As in the proof of Proposition \ref{p:wsplit}, by Proposition \ref{p:inter} we have 
\[  \Big| \P \big[ \cap_i \calP^{\sigma_i}_{I_i} \big] -  \prod_{\ell \ge 1} \P \big[ \cap_{I_i \in \mathcal{C}_\ell} \calP_{I_i}^{\sigma_i}  \big] \Big| \le c  r^2 \bar{K}(s) E , \]
where 
\[ E := \sup_{\ell < \ell', x \in I_i \in \mathcal{C}_\ell, y \in I_j \in \mathcal{C}_{\ell'}, t \in [0,1]} p_t(x,y) \]
is uniformly bounded, and the result follows the fact that $\mathcal{G}_{I_i}$ is a disjoint union of $\calP^{\pm}_{I_i}$.
\end{proof}

\begin{proof}[Proof of Proposition \ref{p:cssplit}]
This is similar to the proof of Proposition \ref{p:ssplit} but simpler.  First we observe that, since $K > 0$, by Slepian's lemma we have
 \[ \max_{\sigma_i \in\{-,+\} } \P[ \cap_i \calP^{\sigma_i}_{I_i}  ]   = \P[ \cap_i  \calP^{+}_{I_i} ] .  \]
Therefore
 \[  \P[ \cap_i \mathcal{G}_{I_i} ]  \le 2^k \P[ \cap_i  \calP^{+}_{I_i} ]  \qquad \text{and} \qquad \prod_{\ell \ge 1} \P[ \cap_{I_i \in \mathcal{C}_\ell}  \mathcal{P}_{I_i}^+ ] \le \prod_{\ell \ge 1} \P[ \cap_{I_i \in \mathcal{C}_\ell}  \mathcal{G}_{I_i} ]  \]
and so it suffices to prove that
 \[  \P[ \cap_i \mathcal{P}^+_{I_i} ] \le   \Big(  \prod_{\ell \ge 1} \P[ \cap_{I_i \in \mathcal{C}_\ell}  \mathcal{P}_{I_i}^+ ] \Big)^{1 - c r^2 \bar{K}(s)   } .\]
We claim that, similar to Proposition \ref{p:depin}, we have the stronger self-bounding property  
\begin{equation}
    \label{e:sb2}
  | p_t(x,y) |  \le  c \P[A_t]  \log(1/ \P[A_t] )  
  \end{equation}
where $p_t(x,y)$ and $A_t$ are defined as in Proposition \ref{p:inter} for the setting $\sigma_i = +$. Supposing this were true, then abbreviating $g(t) = \P[A_t]$, Proposition \ref{p:inter} implies that
\[ |g'(t)|   \le  c k^2 r^2  \bar{K}(s) g(t) ( \log(1/g(t)) .  \]
 Integrating this over $[0,1]$, and taking $r^2 \bar{K}(s)$ sufficiently small,
\[ g(1) \le  e^{ (\log g(0)) e^{-c r^2 \bar{K}(s)} }  \le  g(0)^{1 -  2 c r^2 K(s)  }  \]
which gives the claimed result after adjusting constants.

It remains to prove \eqref{e:sb2}. Arguing as in Proposition \ref{p:depin}, it suffices to consider $x \in I_i \in \mathcal{C}_\ell$ and $y \in I_j \in \mathcal{C}_{\ell'}$, $\ell \neq \ell'$, and prove (c.f. \eqref{e:b2}) that
\begin{equation}
\label{e:b4}
  \P \big[A^{xy}_t  \big| f_t(x) = f_t(y) = f_t'(x) = f_t'(y) = 0 \big] \le c_3   \P [ A_t  ]  . 
  \end{equation}
 By Gaussian regression we write  $f_t$, conditionally on $ v = (f_t(x) , f_t(y) , f_t'(x) , f_t'(y) )$, as
\[ f_t( \cdot ) \stackrel{d}{=} \mu(\cdot)^T  v + g(\cdot) \] 
where $g$ is a process independent of $v$ and $\mu(z)$ is the deterministic vector defined as follows: for $\ell \ge 1$, define $t^{(\ell)} = t$ if $z \in I_i \in \mathcal{C}_{\ell'}$ for $\ell' \neq \ell$, and $ t^{(m)} = 1$ otherwise. By Gaussian regression we may write $\mu(z)$  as
\begin{equation}
\label{e:mu2}
\mu(z) = u(z)^T \Sigma^{-1}
\end{equation}
where
\[ u(z) =  \big(  t^{(\ell)} K(x-z),  t^{(\ell')} K(y-z),  t^{(\ell)} K'(x-z),  t^{(\ell')} K'(y-z) \big ) \]
and $\Sigma$ is as in \eqref{e:mu}. As in \eqref{e:mubound} we have 
 \begin{align*}
 &  \mu(z)^T w   = \frac{w_1  t^{(\ell)} K(x-z)}{K(0)} +  \frac{w_2  t^{(\ell')} K(y-z)}{K(0)}  + \frac{ w_3  t^{(\ell)} K'(x-z)}{-K''(0)} + \frac{w_4   t^{(\ell')}K'(y-z)}{-K''(0)}  \\
 \nonumber & \quad  + o_{|x-y| \to \infty}(1) \times  \|w\|_\infty    \max  \big\{ t^{(\ell)} |K(x-z)|, t^{(\ell)} |K'(x-z)| ,t^{(\ell')} |K(y-z)|, t^{(\ell')} |K'(y-z)| \big\} 
 \end{align*}
 uniformly over  $w = (w_1,w_2,w_3,w_4)$ and $t \in [0,1]$. 

\smallskip
We claim that if $\mathcal{O} \subset \R^2$ is the set defined Claim \ref{c:h}, then
\begin{equation}
    \label{e:mon}
 \mu(z)^T w > 0  
 \end{equation} 
for every $z \in  \cup_m I_m$ and every
\[ w \in  \mathcal{A} = \{ (w_1,w_2,w_3,w_4) : (w_1,w_3),(w_2,w_4) \in  \mathcal{O} \} .  \]
Indeed suppose $z \in I_m \in \mathcal{C}_\ell$. Then by \eqref{e:w}, and since $|K'(z)| / K(z)$ and $\mathcal{O}$ are bounded,
 \begin{align*}
 \mu(z)^T w & \ge    \Big( \frac{w_1 K(x-z)}{K(0)} + \frac{ w_3 K'(x-z)}{-K''(0)} \Big) -  c K(y-z)  - o_{|x-y| \to \infty}( 1)  \max_{q = x-z,y-z} \{ K(q) \} \\
 &  \ge K(x-z)/2   -  c K(y-z) - o_{|x-y| \to \infty}( 1)  \max_{q = x-z,y-z} \{ K(q) \}  . 
 \end{align*}
If $s / \bar{r}$ is sufficiently large, then by \eqref{e:decay2} we have
\[  \max_{q = x-z,y-z} \{ K(q) \} \le c_1 K(x-z) .\]
If also $|x-y| \ge s$ is sufficiently large,  we see that $\mu(z)^T w > 0$. The cases $z \in I_m \in \mathcal{C}_{\ell'}$ and $z \in I_m \notin  \mathcal{C}_\ell \cup \mathcal{C}_{\ell'}$ are similar, and we omit them.
   
Now for $w \in \R^4$, define 
\[ \Psi(w) := \P[ f_t \in A^{xy}_t | v = w] = \P[g +  \mu^T w \in A_t] . \] 
By \eqref{e:mon} and since the events $\mathcal{P}_{I_i}^+$ are increasing,
\[ \Psi(0) \le \inf_{w \in \mathcal{A}} \Psi(w)   \le \P[ A^{xy}_t | v  \in  \mathcal{A} ] \le  \frac{  \P [A_t] }{ \P[ v  \in  \mathcal{A}  ] }  . \]
Since also $\mathcal{O}$ is open, and $v$ is uniformly non-degenerate by Lemma \ref{l:gdb}, $\P[w \in  \mathcal{A}]$ is bounded below, which yields \eqref{e:b4}. 
\end{proof}

\medskip
\section{Absence of clustering}
\label{s:clus}

\medskip

Let $\mathcal{V}_r$ denote the collection of pairs of compact intervals $(I_1,I_2)$ such that $I_1 \cap I_2$ contains at most one point and $|I_i| \ge r$ for $i = 1,2$. Define the gap \textit{clustering coefficient}
\begin{equation}
\label{e:phi}
\phi_r = \sup_{(I_1,I_2) \in \mathcal{V}_r }  \P[ \calG_{I_1} \cap \calG_{I_2} ] .
\end{equation} 

We aim to bound $\phi_r$ under the conditions of our main result; in particular we wish to show that there exists $\eta > 0$ such that $\phi_r \le G(r)^{1 + \eta + o(1)}$ as $r \to \infty$.

\smallskip
 It is likely that such bounds could be proven by suitably adapting the methods of \cite{ffm25,ffm26} that lead to the bounds on $G(r)$ in Proposition \ref{p:per}. For simplicity, here we take a different approach, first establishing  general bounds on $\phi_r$ in terms of $G(r)$ and related quantities, and then combining these with Proposition \ref{p:per} directly.
 
 \smallskip
 The main results in this section are the following:

\begin{proposition}
\label{p:clus1}
Let $f$ be a continuous SGP and $\eps \in (0,1)$ and $r_0 > 0$ be constants such that, for every $r \ge r_0$
\begin{equation}
\label{e:c1}
 K( \eps r ) \ge 0 \qquad \text{and} \qquad  \inf_{|x| < (1-\eps/2)r }K(x) \ge \bar{K}(r)  .
 \end{equation}
Then $\phi_r \le 2G((2-\eps)r)$ for every $r \ge r_0 / \eps$.
\end{proposition}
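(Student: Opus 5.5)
The plan is to reduce to persistence events and then compare covariances with Slepian's lemma. Since gap events shrink as the interval grows, we may shrink $I_1,I_2$ and assume $I_1=[a,a+r]$ and $I_2=[b,b+r]$ with $b\ge a+r$. Let $d=b-a-r\ge0$ be the separation. Decomposing each $\calG_{I_i}$ into $\calP^\pm_{I_i}$ and using the symmetry $f\mapsto -f$ gives
\[ \P[\calG_{I_1}\cap\calG_{I_2}] = 2\P[\calP^+_{I_1}\cap\calP^+_{I_2}] + 2\P[\calP^+_{I_1}\cap\calP^-_{I_2}] . \]
Since $G=2P$, it suffices to show that each of the two probabilities on the right is at most $P((2-\eps)r)=\P[\calP^+_{[0,(2-\eps)r]}]$. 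The tool is Slepian's lemma, applied to a finite union of compact intervals (with the sign convention $X_i<\ell$ replaced by $X_i>0$ via $f\mapsto -f$). The target is the stationary process on a single interval of length $(2-\eps)r$, or a process dominated by it through monotonicity.

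For the opposite-sign term, write $\P[\calP^+_{I_1}\cap\calP^-_{I_2}]=\P[g>0 \text{ on } I_1\cup I_2]$, where $g=f$ on $I_1$ and $g=-f$ on $I_2$. The cross covariances of $g$ are $-K(y-x)$. I would split into two cases.
\begin{itemize}
\item \emph{Small separation, $d\le \eps r/2$.} Discard all of $I_2$ except a piece of length about $\eps r/2$, and discard the corresponding far part of $I_1$. I then want to bound the probability by a persistence event of comparable total length, whose cross correlations $K$ are nonnegative thanks to the first condition in \eqref{e:c1}.
\item \emph{Large separation, $d\ge\eps r/2$.} Every cross distance is at least $\eps r/2\ge r_0/2$ (this is where $r\ge r_0/\eps$ enters). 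The idea is that cross covariances of either sign are then at most $\bar K(\cdot)$ in absolute value, so Slepian lets us replace them by a larger value.
\end{itemize}
For the same-sign term the covariances are $K(y-x)$ with $|y-x|\ge d$, and the same case split applies.

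The comparison step is as follows. Take a contiguous interval $J=[0,(2-\eps)r]$, and map $I_1$ and a suitable subinterval of $I_2$ isometrically onto $[0,(1-\eps/2)r]$ and $[(1-\eps/2)r,(2-\eps)r]$. Restricting to subintervals only increases the probability, so it suffices to compare $J$ with these subintervals of $I_1\cup I_2$. Within each half the covariances coincide. For cross pairs that are close in $J$ (distance $<(1-\eps/2)r$), the second condition in \eqref{e:c1} gives covariance at least $\bar K(r)$. This dominates the original cross covariance whenever the original distance is at least $r$, which is exactly what Slepian requires, namely original $\le$ target.

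The main obstacle is the cross pairs that are far apart in $J$ (distance between $(1-\eps/2)r$ and $(2-\eps)r$): there nothing controls $K$ from below. I would first try to avoid them by a more clever choice of subintervals, placing the two pieces so that all cross pairs that must be compared are within distance $(1-\eps/2)r$. If that fails, I would insert an intermediate Gaussian comparison process whose cross covariance is constant and equal to $\bar K(r)\ge0$. One Slepian comparison would link it to the original configuration, and a second, via the condition $K(\eps r)\ge0$ and a block-monotonicity argument, would link it to $J$. This step is where I expect most of the difficulty to lie, and where the precise form of \eqref{e:c1} should be used.
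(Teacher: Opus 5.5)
There is a genuine gap, and it sits exactly at the step you flag as the main obstacle: you never verify the Slepian covariance inequality for all cross pairs, and neither fallback can work. Far pairs in $J$ are unavoidable, since two pieces placed contiguously to fill $[0,(2-\eps)r]$ always contain a cross pair at distance $(2-\eps)r$. An intermediate process with constant cross covariance $\bar K(r)$ fails on both sides. Comparing it with $J$ needs $K(z)\ge\bar K(r)$ for all $z\le(2-\eps)r$, which is false for any strictly decreasing positive $K$, where $\bar K(r)=K(r)$. Comparing it with the original needs $K(D)\le\bar K(r)$ for every original cross distance $D$. This is false near the junction, where $D$ is only about $d+\eps r<r$ when $d$ is small.

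The same issue already breaks your ``close pairs'' step. Pairs adjacent at the junction of $J$ do not have original distance $\ge r$, so using \eqref{e:c1} only at the fixed scale $r$ controls neither end of the range. As written, your small-separation treatment of the opposite-sign term is also unworkable: keeping only about $\eps r/2$ of $I_2$ leaves total length at most $(1+\eps/2)r$, which for small $\eps$ is below the required $(2-\eps)r$.

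The missing idea is that \eqref{e:c1} holds at \emph{every} scale $\ge r_0$, so you should apply it pair by pair at the scale $D=y-x$ of the original distance, not at $r$. Keep $I_1=[a,a+r]$ and $I_3=[b+\eps r,b+r]\subset I_2$. Let $g=f$ on $I_1$ and $g=f(\cdot-(d+\eps r))$ on $I_3$, so that $g$ on $I_1\cup I_3$ has the law of $f$ on an interval of length $(2-\eps)r$. For $x\in I_1$, $y\in I_3$ you have $\eps r\le D\le d+2r$, hence
\[
\frac{d+\eps r}{D}\ge\frac{d+\eps r}{d+2r}\ge\frac{\eps}{2},
\]
so the new distance $D-(d+\eps r)$ is at most $(1-\eps/2)D$. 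Since $D\ge\eps r\ge r_0$ (this is where $r\ge r_0/\eps$ enters), \eqref{e:c1} at scale $D$ gives
\[
K\big(D-(d+\eps r)\big)\ge\bar K(D)\ge|K(D)| .
\]
Here the strict inequality in \eqref{e:c1} is relaxed by continuity of $K$. This bound dominates the cross covariance $\pm K(D)$ of both $(f,f)$ and $(f,-f)$ on $I_1\times I_3$, and the within-block covariances agree. Slepian's lemma then bounds both of your probabilities by $P((2-\eps)r)=G((2-\eps)r)/2$, with no case split on $d$, and your decomposition finishes the proof.

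The paper performs exactly this translation and the same scale-$D$ bound. Before that, it uses $K\ge0$ at distances $\ge\eps r$ and Slepian to reduce the opposite-sign term to the same-sign one.
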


Observe that, under assumption (2) of Theorem \ref{t:main}, for every $\eps > 0$ one can find $r_0 = r_0(\eps) > 0$ such that \eqref{e:c1} holds.

\begin{proposition}
\label{p:clus2}
Let $f$ be a $C^1$-smooth SGP satisfying $|K(x)| \le c_0 |x|^{-\alpha}$ for some $c_0 > 0$ and $\alpha > 1$. Then for every $\eps > 0$ there exists a $c_1 > 0$ such that, for sufficiently large $r$ 
\[ \phi_r \le 4 \P\big[ f(x) > -\eps : x \in [0,(1-\eps)r] \big]^2  + e^{-  c_1 r^{(\alpha-1)/2} }  .\]
\end{proposition}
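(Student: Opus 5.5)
We reduce to two well-separated persistence events, decouple them with the sprinkled decoupling inequality of Proposition \ref{p:sdi}, and pay for the sprinkling with a small error term. The main obstacle is bounding the maximum correlation coefficient $\rho$ in \eqref{e:mcc}; the plan below handles it by discretising on a grid, but I have not checked that the grid can be made well conditioned.

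\textbf{Reduction.} Let $(I_1,I_2)\in\mathcal{V}_r$ and assume $I_1$ lies to the left of $I_2$. Since $\mathcal{G}_I$ is monotone in $I$, we can shrink to
\[ J_1\subset I_1,\qquad J_2\subset I_2,\qquad |J_1|=|J_2|=(1-\eps)r, \]
chosen so that $J_1$ ends at least $\eps r/2$ before the common endpoint and $J_2$ starts at least $\eps r/2$ after it. Then $\mathrm{dist}(J_1,J_2)\ge \eps r$. Writing $\mathcal{G}_{J_i}=\calP^+_{J_i}\sqcup\calP^-_{J_i}$ gives
\[ \P[\calG_{I_1}\cap\calG_{I_2}]\le \sum_{\sigma_1,\sigma_2\in\{-,+\}}\P[\calP^{\sigma_1}_{J_1}\cap\calP^{\sigma_2}_{J_2}], \]
which accounts for the factor $4$. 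For each sign pair, the process equal to $\sigma_1 f$ on $J_1$ and $\sigma_2 f$ on $J_2$ is centred Gaussian. It has the same marginal laws on each $J_i$ and the same maximum correlation coefficient as $f$, so it suffices to treat $\sigma_1=\sigma_2=+$.

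\textbf{Discretisation.} Apply Proposition \ref{p:sdi} with $\ell=0$ to a finite grid $\Lambda_i\subset J_i$ of mesh $m=m_r$, to be chosen small (polynomially in $r$). The event $\{f>0 \text{ on } J_i\}$ is contained in $\{f>0 \text{ on } \Lambda_i\}$, which is the correct direction for an upper bound. With sprinkling $\eps/2$, and bounding the first factor trivially as well, this gives
\[ \P[\calP^+_{J_1}\cap\calP^+_{J_2}]\le \P[f>-\eps/2 \text{ on }\Lambda_1]\,\P[f>-\eps/2 \text{ on }\Lambda_2]+\exp\Big(-\frac{\eps^2}{32K(0)\rho^2(\Lambda_1,\Lambda_2)}\Big). \]
To return to the continuum, note
\[ \{f>-\eps/2 \text{ on }\Lambda_i\}\subset\{f>-\eps \text{ on } J_i\}\cup\Big\{\sup_{J_i}|f'|>\eps/(2m)\Big\}. \]
By Lemma \ref{l:derbound}, the second event has probability at most $e^{-c\,m^{-2}}$, which is smaller than $e^{-c_1r^{(\alpha-1)/2}}$ once $m\le r^{-(\alpha-1)/4}$. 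Stationarity then yields the main term $4\P[f>-\eps \text{ on }[0,(1-\eps)r]]^2$, with the cross terms absorbed into the error.

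\textbf{Bounding $\rho$ (main obstacle).} We need $\rho(\Lambda_1,\Lambda_2)^2\lesssim r^{-(\alpha-1)/2}$. Write $\rho=\|\Sigma_{11}^{-1/2}\Sigma_{12}\Sigma_{22}^{-1/2}\|_{op}$. Bound the cross-covariance by its Hilbert--Schmidt norm, using $|K(x)|\le c_0|x|^{-\alpha}$:
\[ \|\Sigma_{12}\|_{HS}^2\le \sum_{a\in\Lambda_1,\,b\in\Lambda_2}K(a-b)^2\lesssim m^{-2}\, r\!\!\int_{\eps r}^{\infty}\!x^{-2\alpha}\,dx\lesssim m^{-2}\, r^{2-2\alpha}. \]
The difficulty is the factor $\lambda_{\min}(\Sigma_{ii})^{-1}$, which degenerates as $m\to 0$ because $f$ is smooth. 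I would first try to bound $\lambda_{\min}(\Sigma_{ii})$ below polynomially in $m$ using the spectral density, which is positive on a set of positive measure. If that loss is too large, the fallback is to use a coarser grid for the decoupling step, of mesh of order $1$ or $r^{-\beta}$. Between grid points I would control the process using Lemma \ref{l:derbound} together with a Gaussian-regression interpolation. The exponents are then balanced so that the sprinkling error and the derivative error are both of order $e^{-c\,r^{(\alpha-1)/2}}$.
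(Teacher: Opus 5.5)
There is a genuine gap, and it is exactly the step you flag. Applying Proposition \ref{p:sdi} to $f$ itself on a grid cannot work, because for processes satisfying the hypotheses $\rho(\Lambda_1,\Lambda_2)$ need not be small at all. The obstruction is therefore not the lossy estimate through $\lambda_{\min}(\Sigma_{ii})$, but the size of $\rho$ itself.

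For example, let the spectral measure $\mu$ have a smooth density supported in $[-1,1]$. Then $K$ is a Schwartz function, so $|K(x)|\le c_0|x|^{-\alpha}$ holds for every $\alpha>1$. Take $\Lambda_1=\{x_0-km\}_{0\le k<N}$ with $Nm\approx(1-\eps)r$, and $y=x_0+Dm\in\Lambda_2$ with $Dm\approx\eps r$. Predicting $f(y)$ linearly from $f|_{\Lambda_1}$ amounts to approximating $w^{-D}$, where $w=e^{-im\xi}$, by polynomials of degree $N-1$ in $w$ in $L^2(\mu)$. Writing $u=1-w$, we have $|u|\le m$ on the support of $\mu$. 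Truncating $(1-u)^{-D}=\sum_{j\ge 0}\binom{D+j-1}{j}u^j$ at $j\le N-1$ gives a linear combination of $\{f(x):x\in\Lambda_1\}$ whose difference from $f(y)$ has variance at most $K(0)\,4^{D}(2m)^{2N}$ when $m\le 1/4$. Since $D/N\approx \eps/(1-\eps)$, this tends to $0$ as $r\to\infty$ for every mesh $m\le m_0(\eps)$. Hence $\rho(\Lambda_1,\Lambda_2)\to 1$, and your error term $\exp(-\eps^2/(32K(0)\rho^2))$ stays of order one. The coefficients of $(1-w)^{N-1}$ similarly give $\lambda_{\min}(\Sigma_{ii})\le K(0)\,m^{2(N-1)}\binom{2N-2}{N-1}^{-1}$, so no polynomial lower bound in $m$ exists.

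Your coarse-grid fallback does not escape this either. For this example the same computation applies at every mesh $m\le m_0(\eps)$. A mesh of order one is far too coarse for Lemma \ref{l:derbound}, which, as you computed, needs $m\lesssim r^{-(\alpha-1)/4}$ to return to the continuum at cost $e^{-cr^{(\alpha-1)/2}}$.

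The missing idea, which is what the paper does, is to regularise the discretised vectors with independent noise before decoupling. Let $(Z_a)$ and $(\bar Z_b)$ be i.i.d.\ standard Gaussians independent of $f$, and apply Proposition \ref{p:sdi} to $\bar X_a=f(a)+\delta_r Z_a$ for $a\in\Lambda_1$ and $\bar Y_b=f(b)+\delta_r\bar Z_b$ for $b\in\Lambda_2$.

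\begin{itemize}
\item The noise leaves $\Sigma_{12}$ unchanged but forces $\Sigma_{ii}\succeq\delta_r^2 I$. Your Hilbert--Schmidt estimate then immediately gives $\rho(\bar X,\bar Y)\le\delta_r^{-2}\|\Sigma_{12}\|_{HS}\lesssim\delta_r^{-2}m^{-1}r^{1-\alpha}$.
\item The price is $\P[\max_a|Z_a|>\eps/\delta_r]\le (r/m)\,e^{-\eps^2/(2\delta_r^2)}$, and similarly for $\bar Z$. You pay it once to pass from $\{f>0 \text{ on } \Lambda_i\}$ to $\{\bar X>-\eps\}$ (respectively $\{\bar Y>-\eps\}$), and once more to remove the noise after decoupling.
\item Choosing $m=\delta_r=r^{-(\alpha-1)/4}$ gives $\rho\lesssim r^{-(\alpha-1)/4}$. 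The decoupling error $\exp(-c\eps^2/\rho^2)$, the noise cost and the derivative cost from Lemma \ref{l:derbound} are then all at most $e^{-cr^{(\alpha-1)/2}}$.
\end{itemize}

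With this in place, the rest of your argument goes through: the reduction to two persistence events at distance $\eps r$, the return to the continuum, stationarity, and replacing $\eps$ by a constant multiple of itself.
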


\begin{corollary}\label{c:clus}
Under the conditions of Theorem \ref{t:main} there exists an $\eta > 0$ such that, as $r \to \infty$,
\[ \phi_r \le G(r)^{1+\eta + o(1)} .\] 
\end{corollary}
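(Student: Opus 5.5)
We split according to the three regimes in Theorem~\ref{t:main}, combining Propositions~\ref{p:clus1} and \ref{p:clus2} with the log-asymptotics of Proposition~\ref{p:per}. Throughout we use that $G(r) = e^{-(1+o(1))\Theta(r)}$, where $\Theta(r) = \zeta r$ in the short-range cases and $\Theta(r) = \zeta r^\alpha \log r$ when $\alpha \in (0,1)$.

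\emph{Case $\alpha$-PD with $\alpha\in(0,1)$ (and, in fact, any $\alpha$ under assumption (2)).} Here $K>0$ and $K(x)\sim x^{-\alpha}$, so for each fixed $\eps>0$ condition \eqref{e:c1} holds for large $r$, as remarked after Proposition~\ref{p:clus1}. That proposition gives $\phi_r \le 2G((2-\eps)r)$. By Proposition~\ref{p:per},
\[
-\log G((2-\eps)r) \sim (2-\eps)^\alpha\, \zeta r^\alpha\log r \sim (2-\eps)^\alpha(-\log G(r)).
\]
Hence $\phi_r \le G(r)^{(2-\eps)^\alpha + o(1)}$. Choosing $\eps$ small so that $(2-\eps)^\alpha>1$ gives the claim with $\eta=(2-\eps)^\alpha-1>0$. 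When $\alpha>1$ the same argument gives exponent $2-\eps$, since $-\log G(r)\sim\zeta r$; so assumption (2) is fully covered by Proposition~\ref{p:clus1}.

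\emph{Case ($\infty$-PD) with $\int K\neq0$.} Here $K$ need not be positive, so we use Proposition~\ref{p:clus2} with any fixed $\alpha>1$; super-polynomial decay together with boundedness of $K$ gives $|K(x)|\le c_0|x|^{-\alpha}$. For fixed $\eps>0$ this yields
\[
\phi_r \le 4\,\P\big[f>-\eps \text{ on } [0,(1-\eps)r]\big]^2 + e^{-c_1 r^{(\alpha-1)/2}}.
\]

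The first term needs care. Proposition~\ref{p:per}(1) gives the rate $\zeta$ only for vanishing levels $\eps_r\to0$, not for a fixed level. The fix is to use the underlying continuity statement quoted in its proof, namely \cite[Theorems 1 and 4]{ffm25}: for fixed $\eps$ the rate is $\zeta_\eps$, and $\zeta_\eps\to\zeta$ as $\eps\to0$. The first term is therefore $e^{-2(1-\eps)\zeta_\eps r(1+o(1))}$, which is $G(r)^{2(1-\eps)\zeta_\eps/\zeta + o(1)}$, and the exponent exceeds $1$ for small $\eps$.

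The second term is the main obstacle. It is $e^{-c_1 r^{(\alpha-1)/2}}$, which beats $G(r)^{1+\eta}=e^{-(1+\eta)\zeta r}$ only if $(\alpha-1)/2>1$, i.e.\ $\alpha>3$. Under ($\infty$-PD) we may take $\alpha$ as large as we like, say $\alpha=5$, so this term is $o(G(r)^{2})$.

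Combining the two cases gives $\phi_r\le G(r)^{1+\eta+o(1)}$ in every case of Theorem~\ref{t:main}, where the $o(1)$ absorbs the constants $2$ and $4$. The only genuinely delicate points are the fixed-level continuity of the rate $\zeta_\eps$ and the choice of a large $\alpha$ in the ($\infty$-PD) case.
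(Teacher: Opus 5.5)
Your proposal is correct and matches the paper's proof. Under assumption (2) you use Proposition~\ref{p:clus1} with the log-asymptotics, giving $\eta$ just below $2^{\min\{1,\alpha\}}-1$; under ($\infty$-PD) you use Proposition~\ref{p:clus2} with $\alpha>3$ plus the small-level behaviour of the persistence rate. Your explicit appeal to continuity of $\eps\mapsto\zeta_\eps$ is what the paper uses implicitly, and it can also be recovered from the vanishing-sequence form of Proposition~\ref{p:per}(1) by a diagonal argument.
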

\begin{proof}
Suppose the conditions in the first item are satisfied, in particular ($\infty$-PD) holds. Then Proposition \ref{p:clus2} implies that, for every $\eps, \alpha > 0$ and sufficiently large $r$,
\[ \phi_r \le 4   \P\big[ f(x) > -\eps : x \in [0,(1-\eps)r] \big]^2  + e^{- c_1 r^{(\alpha-1)/2} }  . \] 
By Proposition \ref{p:per}, for every $\eps' > 0$ we may choose $\eps > 0$ sufficiently small so that
\[  G(r) = e^{-\zeta r + o(r)} \qquad \text{and} \qquad \P\big[ f(x) > -\eps : x \in [0,(1-\eps)r] \big]   \le e^{-(\zeta-\eps')r }. \]
Hence taking $\alpha > 3$
 \[ \phi_r \le  e^{-2(\zeta-\eps')r + o(r) } = e^{- \zeta r (2 - 2 \eps'/\zeta ) + o(r) }  =  G(r)^{2  - 2 \eps'/\zeta  + o(1)} .\]
 Taking $\eps' \to 0$ sufficiently small gives the claim for $\eta = 1$.
 
Suppose the conditions in the second item are satisfied, in particular ($\alpha$-PD) holds. Then  Proposition \ref{p:clus1} implies that, for every $\eps > 0$, 
 \[  \phi_r \le 2G((2-\eps)r) .\]
Combining with $G(r) \sim e^{-\zeta r^{\min\{1,\alpha\}} (\log r)^{\id_{\alpha < 1} } }$  (Proposition \ref{p:per}), and taking $\eps$ sufficiently small gives the claim for $\eta = 2^{\min \{1,\alpha \}}- 1 > 0$.
\end{proof}

Recall the notation $\mathcal{P}^\sigma_I$ from the previous section. Since $\calG_I = \calP_I^+ \cup \calP_I^-$, and by monotonicity, we have
\begin{equation}
\label{e:phi2}
 \phi_r \le 4  \sup_{(I_1,I_2) \in \mathcal{V}'_r, \sigma_1, \sigma_2 \in \{-,+\} } \P[ \calP^{\sigma_1}_{I_1} \cap \calP^{\sigma_2}_{I_2} ]   
\end{equation}
 where $\mathcal{V}'_r$ is the subset of $\mathcal{V}_r$ such that the intervals $I_i$ have length \textit{exactly} $r$, so we focus on bounding the latter quantity. 

\begin{proof}[Proof of Proposition \ref{p:clus1}]
Fix $\eps \in (0,1)$ and $r_0 > 0$ and suppose that \eqref{e:c1} is satisfied. Without loss of generality suppose that $I_1 = [s,t]$ and $I_2 = [u,v]$ with $t-s = r > r_0$, $v-u = r > r_0$, and $t \le u$. Define the interval $I_3 = [u+\eps r, v] \subset I_2$, disjoint from $I_1$.

First observe that, since $K(x) \ge 0$ for $x \ge \eps r$, by monotonicity and Slepian's lemma
 \[ \max_{\sigma_1, \sigma_2 \in \{-,+\} } \P[  \calP^{\sigma_1}_{I_1} \cap \calP^{\sigma_2}_{I_2}  ]  \le \max_{\sigma_1, \sigma_2 \in \{-,+\} } \P[  \calP^{\sigma_1}_{I_1} \cap \calP^{\sigma_2}_{I_3}  ] = \P[  \calP^{+}_{I_1} \cap \calP^{+}_{I_3}  ] . \] 
 
Next consider the auxiliary process
\[ g(x) =  \begin{cases}
f(x) & \text{if } x \in I_1 , \\ f(x   - (u - t) - \eps r )  & \text{if } x \in I_3 . \end{cases} \]
Observe that, for $x \in I_1$ and for $y \in I_3$,
\[  \E[ g(x)g(y) ] = K(y-x - (u-t) - \eps r ) \ge K(y-x) = \E[ f(x)f(y) ]  \]
where the inequality holds for sufficiently large $r$ by the second condition in \eqref{e:c1} once we observe that
\[ \frac{ y-x - (u-t) - \eps r }{y-x} \le 1 - \eps/2 . \]
Therefore, for $x, y\in I_1\cup I_3$, we always have 
\[  \E[ g(x)g(y) ]  \ge  \E[ f(x)f(y) ].  \]
 Applying Slepian's lemma once more
\begin{align*}
& \P[ f \in  \calP^{+}_{I_1} \cap \calP^{+}_{I_3}  ]  \le \P[  g \in \calP^{+}_{I_1} \cap \calP^{+}_{I_3} ] \\
& \qquad =  \P[  f \in  \calP^{+}_{[s,t+ v - (u + \eps r) ]}  ] = G(t-s + v- (u+\eps r) )/2 \le G((2-\eps)r)/2 .
 \end{align*}
Combining with \eqref{e:phi2} we see that $\phi_r \le 4  \times ( G((2-\eps)r)/2 ) = 2 G((2-\eps)r)$ as required.
\end{proof}

\begin{proof}[Proof of Proposition \ref{p:clus2}]
 Let $I_1, I_2, I_3$ be as in the previous proof, and suppose $\sigma_1 = +$ and $\sigma_2 = -$ for simplicity (the proof in the other cases are identical). Let $\delta_r = r^{-(\alpha-1)/4}$. Define $X = f|_{\delta_r \Z} $ and $Y = -f|_{\delta_r \Z}$, let $(Z_i)$ and $(\bar Z)_i$ be i.i.d.\ standard Gaussian vectors indexed by $\eta_r \Z$, and define $\bar{X} = X + \delta_r Z$ and $\bar{Y} = Y + \delta_r \bar{Z}$. 
By monotonicity,
 \[ \P[  \calP^{\sigma_1}_{I_1} \cap \calP^{\sigma_2}_{I_2}  ]   \le \P[ X_i > 0  \ \forall i \in  I_1, Y_j > 0 \ \forall j \in I_3]  . \]
 By standard bounds for i.i.d.\ Gaussian variables, for sufficiently large $r$
 \[ \P \Big[  \max_{i \in \cup \{I_1, I_3\} } \max\{ |Z_i|, |\bar{Z}_i| \} > \eps/\delta_r \Big] \le 2r \delta_r^{-1} e^{- (\eps/\delta_r)^2/2 } \le e^{- c_1 r^{(\alpha-1)/2}  } .\]
 Therefore
 \[ \P[ X_i > 0  \ \forall i \in  I_1, Y_j >0 \ \forall j \in I_3]  \le \P[ \bar{X}_i > -\eps  \ \forall i \in  I_1, \bar{Y}_j >-  \eps \ \forall j \in I_3]  +  e^{- c_1 r^{(\alpha-1)/2}  } .\]
 Let $\rho$ denote the maximum correlation coefficient between $\bar{X}$ and $\bar{Y}$ defined in \eqref{e:mcc}, namely 
 \[  \rho  = \sup_{ \alpha,\beta}   \frac{ | \textrm{Cov}[  \langle \alpha , \bar{X} \rangle , \langle \beta , \bar{Y} \rangle ] | }{ \sqrt{ \textrm{Var}[\langle \alpha , \bar{X}  \rangle ] \textrm{Var}[ \langle \beta , \bar{Y}  \rangle] } } . \]
By Cauchy-Schwarz we have
 \[ \rho \le  \frac{ \max_{i,j} |\textrm{Cov}[\bar{X}_i, \bar{Y}_j] | \|\alpha\|_1 \|\beta\|_1 }{ \delta_r^2  \|\alpha\|_2 \|\beta\|_2  }  \le \bar{K}(\eps r) \delta_r^{-2} (r \delta^{-1}_r)   \le c_2 r^{-\alpha + 1 +3(\alpha-1)/4 } = c_2 r^{-(\alpha-1)/4 }.\]
 Applying the sprinkled decoupling inequality (Proposition \ref{p:sdi}, and suppose for simplicity that $f$ has unit variance) and by monotonicity
 \begin{align*}
  & \P[ \bar{X}_i > -\eps  \ \forall i \in  I_1, \bar{Y}_j >-  \eps \ \forall j \in I_3]  \\
  & \quad \le  \P[ \bar{X}_i > -\eps  \ \forall i \in  I_1] \P[  \bar{Y}_j > - 2 \eps \ \forall j \in I_3] + e^{- \eps^2 / ( 8 (1 + \delta_r^2)  \rho^2 ) } \\
  & \quad \le   \Big( \P[ X_i > -2\eps  \ \forall i \in  I_1]  \!+\!   e^{- c_1 r^{(\alpha-1)/2} } \Big)\! \Big( \P[  Y_j > - 3 \eps \ \forall j \in I_3]  \! + \!  e^{- c_1 r^{(\alpha-1)/2}   }  \Big) \! +  e^{- c_3 r^{(\alpha-1)/2}  }\\
  & \quad \le    \P[ X_i > -3\eps  \ \forall i \in  I_1]  \P[  Y_j > - 3 \eps \ \forall j \in I_3]  + 4e^{- c_3 r^{(\alpha-1)/2}  } .
  \end{align*}  
 Finally by the derivative bound (Lemma \ref{l:derbound})
 \[ \P \Big[  \max_{x \in \cup \{I_1, I_3\} } |f'(x)| > \eps/\delta_r \Big] \le e^{- c_4 \delta_r^{-2} } = e^{- c_4  r^{(\alpha-1)/2}  } , \]
 which implies that
 \[ \P[ X_i > -3\eps  \ \forall i \in  I_1]  \le \P[ f(x) > -4\eps  \ \forall x \in  I_1] +  e^{- c_4  r^{(\alpha-1)/2}  } \] 
 and similarly for $\P[  Y_j > - 3 \eps \ \forall j \in I_3]$. Therefore we have
 \[ \P[  \calP^{\sigma_1}_{I_1} \cap \calP^{\sigma_2}_{I_2}  ]   \le   \P[ f(x) > -4\eps  \ \forall x \in  I_1] \P[ f(x) > -4\eps  \ \forall x \in  I_3]  +  e^{- c_5  r^{(\alpha-1)/2}  } .  \]
Combining with \eqref{e:phi2}, and by monotonicity and adjusting constants, we conclude the proof.
\end{proof}

\medskip
\section{Poisson approximation}
\label{s:pois}
In this section we establish the Poisson approximation in Theorem \ref{t:main}. Recall the scaling function $\theta(r)$ in \eqref{thetafunction}, and the rescaled point process 
\[ \Psi_R= \sum_{z_i \in [0, R]} \delta_{z_i/R,\,\, \theta(z_{i+1}-z_i)-\log R} , \]
where $(z_i)_{i \in \Z} = \mathcal{Z}$ is an arbitrary increasing ordering of the zeros. We prove that $\Psi_R$ converges vaguely to the Poisson point process with intensity  $dx \otimes e^{-y} dy$ on the compactified space $[0,1] \times (\R \cup \infty)$.

\smallskip
We begin by reducing the convergence of the point process to a statement about gap probabilities, following a moment method inspired by \cite{fw25}. This step does not use specific properties of the zero set. We then use the splitting and clustering properties established in the previous sections to complete the proof.

\subsection{Method of moments}
Fix bounded intervals $I \subset \R$ and $A \subset \R \cup \infty$; in particular $A = [a,\infty)$ is permitted. We define $i_1 < i_2$ such that $\bar{I} = [i_1,i_2]$, and $a < b \le \infty$  such that $\bar{A} = [a,b]$. Let $\theta^{-1}$ denote the left-continuous inverse of $\theta$, which satisfies $\theta(\theta^{-1}(s)) = s$, and define $t_R(s) = \theta^{-1}(s + \log R)$, which satisfies $t_R(s) \to \infty$ as $R \to \infty$ and $\theta(t_R(s)) - \log R = s$. Introduce a microscopic scale $\eps_R = R^{-\gamma}$, with $\gamma>0$ to be chosen later. Recall the gap events~$\mathcal{G}$. Then for $x \in \R$ and $0 < y_1 < y_2 \le \infty$ we introduce the non-negative random variable
\begin{align*}
  h_R(x; y_1,y_2) &=  \eps_R^{-1} \Big( \id_{ \mathcal{G}_{[x,x+ y_1 ]} } - \id_{ \mathcal{G}_{[x-\epsilon_R,x+y_1 ]} } -  \id_{ \mathcal{G}_{[x,x+y_2-\eps_R]} } +  \id_{ \mathcal{G}_{[x-\eps_R,x+y_2-\eps_R] } } \Big)  \\
& = \eps_R^{-1}  \id_{ \exists z_i \in [x-\eps_R,x) }\id_{ \mathcal{G}_{[x,x+y_1]} }  \id_{  z_{i+1} \in (x+y_1,x+y_2-\eps_R] } \\& = \eps_R^{-1}  \id_{ \exists z_i \in [x-\eps_R,x) }\id_{z_{i+1}-z_i\in (y_1,y_2] } .
\end{align*}
Roughly speaking, $h_R(x; y_1,y_2)$ is an $\eps_R$-smooth approximation of the indicator that there is a gap starting at the point $x$ with length in $[y_1,y_2]$. For $k \in \mathbb{N}$ and $x_1,\ldots,x_k \in \R$, we define an  `off-diagonal' indicator of distinct gaps
\[ D^{\neq}(x_1,.., x_k):=\id_{ (x_i) \text{ lie in pairwise distinct }[z_{j_i},z_{j_i+1}]} . \]
Next we define
\[ h^+_R(x) = h_R \big(x; t_R(a) -\eps_R ,t_R(b) +\eps_R \big)  \qquad \text{and} \qquad h^-_R(x) =   h_R \big(x; t_R(a) +\eps_R ,t_R(b) -\eps_R \big) \]
with the convention that $t_R(\infty) = \infty$. Finally we define, for $k \in \mathbb{N}$ and $x_1,\ldots,x_k \in \R$, 
\[ m_R^{+,k} =  \int_{[Ri_1-\eps_R, Ri_2+\eps_R]^k} \E \Big[ \prod_{i=1}^k h^+_R(x_i) D^{\neq}(x_1,.., x_k) \Big] \, dx_1 \cdots dx_k \]
and
\[ m_R^{-,k} := \int_{[R i_1+\eps_R,R i_2-\eps_R]^k} \E\Big[ \prod_{i=1}^k h^-_R(x_i) D^{\neq}(x_1,.., x_k)  \Big] \, dx_1 \cdots dx_k . \]
Note that $m_R^{\pm,k}$ depend on the intervals $I$ and $A$, but we have dropped these from the notation.

\smallskip
We reduce the Poisson approximation to the following statement:

\begin{proposition}
\label{p:sc}
Under the assumptions in Theorem \ref{t:main}, for every $k \in \mathbb{N}$ there exists a choice of $\gamma > 0$ such that the following hold:
\begin{enumerate}
    \item For all bounded intervals $I \subseteq [0,1]$ and $A \subset \R$,
\[\lim_{R \to \infty} m_R^{\pm,k} =  \Big( |I| \int_A e^{-y}dy \Big)^k. \]
\item For $I = [0,1]$ and every $A = [a,\infty)$,
\[\lim_{R \to \infty} m_R^{+,1} =   \int_A e^{-y}dy .\]
\end{enumerate}
\end{proposition}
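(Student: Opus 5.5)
We prove Proposition~\ref{p:sc} by expressing $\E[\prod_i h^\pm_R(x_i) D^{\neq}]$ through gap probabilities, and then splitting the integration domain $[R i_1, R i_2]^k$ into a well-separated part, where the splitting estimates of Section~\ref{s:split} apply, and a clustered part, which is shown to be negligible using Section~\ref{s:clus}.

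\textbf{Case $k=1$.} By stationarity, $\E[h_R(x;y_1,y_2)] = \eps_R^{-1}\big(G(y_1)-G(y_1+\eps_R)-G(y_2-\eps_R)+G(y_2)\big)$. Lemma~\ref{l:lambda} gives that $G$ is $C^2$, and Proposition~\ref{p:gpp} controls $G''$, so this equals $-G'(y_1)+G'(y_2)+O(\eps_R \sup_{[y_1,y_1+\eps_R]}G'')$. Take $y_1 = t_R(a)\mp\eps_R$ and $y_2 = t_R(b)\pm\eps_R$. Since $-G'(t_R(s)) = e^{-\theta(t_R(s))} = e^{-s}/R$, the main term is $R^{-1}(e^{-a}-e^{-b})$. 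The shift by $\eps_R$ changes this only by a factor $1+o(1)$: we need $\eps_R G''(t_R) \ll |G'(t_R)|$, and Proposition~\ref{p:gpp} gives $G''(t_R)\le G(t_R)^{1+o(1)} = R^{-1+o(1)}$, so this holds once $\gamma>0$. Integrating over an interval of length $R|I|(1+o(1))$ gives $|I|(e^{-a}-e^{-b})$. This proves item~(2) and item~(1) for $k=1$.

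\textbf{Well-separated part, general $k$.} Fix $\delta>0$ small and $s_R=R^\delta$. On the set where all $|x_i-x_j|\ge s_R$, the product $\prod_i h_R(x_i)$ is a signed combination of $4^k$ indicators of intersections $\cap_i \mathcal{G}_{J_i}$, where each $J_i$ is an interval near $x_i$ of length at most $t_R(b)+1 \lesssim L_R$. The factor $D^{\neq}$ is then automatic. Proposition~\ref{p:ssplit} under ($\alpha$-PD), or Proposition~\ref{p:wsplit} under ($\infty$-PD), shows that each intersection probability equals the product of the marginals up to a multiplicative error $1+O(L_R^2 \bar K(R^\delta)(\log R)^3)$, respectively $1+O(L_R^2\bar K(R^\delta)R^{k+o(1)})$; both errors are $o(1)$.

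The difficulty is that the signed combination involves cancellations of order $\eps_R$ per factor, so a multiplicative error on each term is not directly good enough. To handle this, apply the interpolation formula (Proposition~\ref{p:inter}) to the whole inclusion–exclusion combination at once. Alternatively, use the fact that the signed combination is itself $\eps_R^k\,\P[\cap_i E_i]$, where $E_i$ is the event that there is a zero in $[x_i-\eps_R,x_i)$ together with the gap condition. I expect this to be the \textbf{main obstacle}. The plan is to rerun the self-bounding argument of Proposition~\ref{p:depin} with the boundary delta masses of $\mu_{I}$ placed at the microscopic endpoints. If that fails, the fallback is to choose $\gamma$ small relative to the splitting error: the multiplicative error $\rho$ on each of the $4^k$ terms contributes at most $\rho\,4^k R^{-k+o(1)}$, against a main term of size $\eps_R^k R^{-k}$, so it suffices that $\rho \ll R^{-k\gamma}$. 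Under ($\alpha$-PD) we have $\rho\le R^{-\alpha\delta+o(1)}$, so taking $\gamma<\alpha\delta/k$ works. This is why $\gamma$ is allowed to depend on $k$. The well-separated region has measure $(R|I|)^k(1+o(1))$, so this part contributes $(|I|(e^{-a}-e^{-b}))^k$ in the limit.

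\textbf{Clustered part.} If some pair satisfies $|x_i-x_j|<s_R$, group the points into clusters of diameter at most $k s_R$. Inside a cluster, $D^{\neq}$ forces distinct gaps, each of length at least $t_R(a)-1$. The two gaps either overlap in at most one point or are disjoint within distance $ks_R$. Bound $h_R\le \eps_R^{-1}\mathbf{1}\{\text{zero in }[x-\eps_R,x)\}$ and integrate out the microscopic positions with Kac–Rice, whose densities are bounded as in Lemma~\ref{l:lambda}. A cluster of two gaps then contributes at most $s_R\cdot R\cdot\phi_{t_R(a)} \le R^{1+\delta}R^{-1-\eta+o(1)}$ after integration, by Corollary~\ref{c:clus}; this requires $\delta<\eta$. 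Clusters are decoupled from one another by Propositions~\ref{p:cwsplit} and~\ref{p:cssplit}. The total clustered contribution is therefore $O(R^{\delta-\eta+o(1)})$ times bounded factors, which tends to $0$. The $m^{-,k}$ version is the same, giving matching lower and upper limits.
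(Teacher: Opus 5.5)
Your $k=1$ computation and your fallback for the well-separated region are exactly the paper's argument. That argument expands $\prod_i h^+_R(x_i)$ into $4^k$ intersections of gap events, bounds the difference by $R^{k\gamma+o(1)}\rho^k_{r,s}$, and takes $\gamma<\alpha\delta/k$. The rerun of the self-bounding argument with microscopic boundary points is not needed.

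\textbf{The gap: separation of clusters.} The genuine gap is in the clustered part, under assumption (2) of Theorem~\ref{t:main}. There the usable decoupling is Proposition~\ref{p:cssplit}, which requires the inter-cluster separation to be at least $c$ times the cluster diameter $\bar r$. The weak version, Proposition~\ref{p:cwsplit}, has additive error $r^2\bar K(s_R)=R^{-\alpha\delta+o(1)}$. That error cannot beat the volume factor $R^{m}$, $m\ge 1$, of the clustered configurations when $\alpha\delta$ is small. If you group points at the single threshold $s_R$, clusters are separated by at least $s_R$ but have diameter up to about $k s_R$, so the hypothesis $s\ge c\bar r$ fails.

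The missing idea is to choose the clustering scale by pigeonhole. Among the $k+1$ windows $((2kc)^v s,(2kc)^{v+1}s]$, $0\le v\le k$, one contains none of the $k-1$ consecutive distances $d(I_i,I_{i+1})$. Link consecutive intervals at distance at most $(2kc)^v s$. The resulting clusters have diameter at most $k(r+(2kc)^v s)\le \bar s/c$ and are separated by more than $\bar s=(2kc)^{v+1}s$. All scales remain $R^{\delta+o(1)}$, so the volume is $R^{m+k\delta+o(1)}$. Finally, $D^{\neq}_R=0$ forces a link below $s$, so at least one cluster is not a singleton.

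\textbf{The Kac--Rice step.} This step is not justified as stated. Integrating out the zero positions produces expectations of $\prod_i|f'(z_i)|$ times the gap indicators, conditioned on $f(z_i)=0$. These are not the unconditional probabilities to which Corollary~\ref{c:clus} and Propositions~\ref{p:cwsplit}--\ref{p:cssplit} apply, so you would need an additional multi-point de-pinning estimate.

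The step is also unnecessary. Bound $h^+_R(x)\le\eps_R^{-1}\id_{\mathcal{G}_{[x,x+t_R(a)-\eps_R]}}$, restrict to configurations where these intervals are pairwise disjoint, and accept the loss $\eps_R^{-k}=R^{k\gamma}$. With $P_0\le\phi_r^{m'}G(r)^{m_s}\le R^{-m-\eta+o(1)}$ and volume $R^{m+k\delta+o(1)}$, the clustered contribution is $R^{k\gamma+k\delta-\eta+o(1)}$. You therefore need $\delta+\gamma<\eta/k$. Your condition $\delta<\eta$ only handles clusters of two points. The requirement $\delta+\gamma<\eta/k$ is compatible with $\gamma<\alpha\delta/k$: choose $\delta$ first, then $\gamma$.
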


\begin{proof}[Proof of Theorem \ref{t:main} assuming Proposition \ref{p:sc}]
For $x \in \R$ let $(x)_k = \prod_{j = 0}^{k-1} (x-j)$ be its falling factorial, and for a random variable $X$ let $\hat{m}_k(X) := \E \left[ (X)_k \right]$ be its $k^\text{th}$ factorial moment. Fix bounded intervals $I \subset \R$ and $A \subset \R \cup \infty$, $k \in \mathbb{N}$, a suitable choice of $\gamma > 0$ as in Proposition \ref{p:sc}, and define $a < b \le \infty$ such that $[a,b] = \bar{A}$. Then we claim that, as long as $\eps_R < t_R(a)/2$,
\begin{equation}
\label{e:comp}
 m_R^{-,k} \le \hat{m}_k \big(\Psi_R(I\times A) \big) \le m_R^{+,k} . 
\end{equation}
Let us complete the proof assuming \eqref{e:comp}. Suppose that $b < \infty$. Since $\eps_R \le t_R(a)/2$ for sufficiently large $R$, we deduce from \eqref{e:comp} and the first item of Proposition \ref{p:sc} that
\begin{equation}
    \label{e:mm}
\lim_{R\to\infty} \hat{m}_k \big(\Psi_R(I\times A) \big) \to \Big(|I|\int_A e^{-y}dy\Big)^k.
\end{equation}
Since the right-hand side of \eqref{e:mm} is the $k^\text{th}$ factorial moment of a Poisson distribution with mean $|I|\int_A e^{-y}dy$, by a standard criterion (see Proposition 2.1 in \cite{BB13}) this implies the vague convergence of $\Psi_R$ to a Poisson point process with intensity $dx \otimes e^{-y} dy$ on $[0,1] \times \R$. Moreover, by the second item of Proposition \ref{p:sc}, for every $a \in \R$,
\[  \limsup_{R \to \infty} \E \big[\Psi_R \big([0,1] \times [a,\infty) \big) \big] \le \lim_{R \to \infty}  m_R^{+,1} = \int_a^\infty e^{-y} dy  \]
and hence, for every $\varepsilon > 0$,
\[ \lim_{a \to \infty} \limsup_{R \to \infty} \P \big[\Psi_R \big([0,1] \times [a,\infty) \big) > \varepsilon \big]  = 0 . \]
Combining with the vague convergence, this implies that the convergence in fact holds weakly on $[0,1] \times [a,\infty)$ for every $a \in \R$ (\cite[Theorem 4.9]{kall}), and hence vaguely on the compactified space $[0,1] \times (\R \cup \infty)$.

It remains to prove \eqref{e:comp}. To simplify the presentation, suppose that $I = [i_1,i_2]$ and $A = [a,b]$ are closed (the other cases are analogous). Define 
\[ F:= [t_R(a),\, t_R(b)],\quad F_{+} := (t_R(a)-\varepsilon_R,\, t_R(b)+\varepsilon_R],\quad F_{-} := (t_R(a)+\varepsilon_R,\, t_R(b)-\varepsilon_R]. \]
Then by definition, 
\[ h_R^{\pm}(x)
= \varepsilon_R^{-1}\,
\id\big\{ \exists z_i \in [x-\eps_R,x),  z_{i+1}-z_i\in F_{\pm}\big\}. 
\] 
Given a zero configuration $(z_j)$ we have  
\begin{align}
\nonumber \prod_{i=1}^k h_R^+(x_i)\, D^{\neq}(x_1,\dots,x_k)
= &
\varepsilon_R^{-k}\prod_{i=1}^k
\id\Big\{
\exists \ \text{distinct } z_{j_i}:
z_{j_i}\in [x_i-\varepsilon_R,x_i),\ \
z_{j_i+1}-z_{j_i}\in F_{+}
\Big\}\\
\nonumber =&\eps_R^{-k} \sum_{\substack{j_1,\dots,j_k\\ \text{distinct}}} 
\prod_{i=1}^k \id_{[z_{j_i},z_{j_i}+\eps_R)}(x_i)\,
\id\{z_{j_i+1}-z_{j_i}\in F_+\} \\
 \label{e:g+} \geq & \eps_R^{-k} \sum_{\substack{j_1,\dots,j_k\\ \text{distinct}}} 
\prod_{i=1}^k \id_{[z_{j_i},z_{j_i}+\eps_R)}(x_i)\,
\id\{z_{j_i+1}-z_{j_i}\in F\},
\end{align}
where in the last step we used \( F \subset F_+\). For the second equality, note that the sum can only take values in $\{0, \eps_R^{-k}\}$, since for fixed $x_1,\dots,x_k$, at most one tuple of distinct indices $(j_1,\dots,j_k)$ can satisfy $x_i \in [z_{j_i}, z_{j_i}+\eps_R)$  and $z_{j_i+1}-z_{j_i}\in F_+$ for all $i$.

Moreover, by definition
\[ \Psi_R(I\times A)
= \sum_j \id\{z_j\in[Ri_1,Ri_2], \ z_{j+1}-z_{j}\in F \}. \]
Hence its falling factorial satisfies
\begin{align*}
& (\Psi_R(I\times A))_k = \sum_{\substack{j_1,\dots,j_k\\ \text{distinct}}}
\prod_{i=1}^k \id\{z_{j_i}\in[Ri_1,Ri_2],\ z_{j_i+1}-z_{j_i}\in F \}\\
 & \qquad \quad \le \int_{[Ri_1-\eps_R,Ri_2+\eps_R]^k}
\eps_R^{-k} \sum_{\substack{j_1,\dots,j_k\\ \text{distinct}}} 
\prod_{i=1}^k \id_{[z_{j_i},z_{j_i}+\eps_R)}(x_i)\,
\id\{z_{j_i+1}-z_{j_i}\in F\} \,dx_1\cdots dx_k,
\end{align*}
where we used the inequality \(\id_{z\in[s,t]}\leq \int_{s-\epsilon}^{t+\eps}\eps^{-1}\id_{[z,z+\eps)}(x)\,dx\) for any fixed $\eps>0$ and $z\in\mathbb R$. Taking expectations, and comparing with \eqref{e:g+}, 
\[\begin{aligned}
& \hat{m}_k \big(\Psi_R(I\times A) \big)  \\
& \qquad \le   \E\int_{[Ri_1-\eps_R,Ri_2+\eps_R]^k}
\eps_R^{-k} \sum_{\substack{j_1,\dots,j_k\\ \text{distinct}}} 
\prod_{i=1}^k \id_{[z_{j_i},z_{j_i}+\eps_R)}(x_i)\,
\id\{z_{j_i+1}-z_{j_i}\in F\} \,dx_1\cdots dx_k  \\
 & \qquad \le   \int_{[Ri_1-\eps_R, Ri_2+\eps_R]^k} \E \Big[\prod_{i=1}^k h^+_R(x_i) D^{\neq}(x_1,.., x_k) \Big]  \, dx_1 \cdots dx_k  =: m_R^{+,k}.\end{aligned} \]
Analogously, because $F_-\subset F$, one obtains the pointwise bound
\[\eps_R^{-k} \sum_{\substack{j_1,\dots,j_k\\ \text{distinct}}} 
\prod_{i=1}^k \id_{[z_{j_i},z_{j_i}+\eps_R)}(x_i)\,
\id\{z_{j_i+1}-z_{j_i}\in F\}\geq 
\prod_{i=1}^k h_R^-(x_i)\, D^{\neq}(x_1,\dots,x_k), 
\]
which together with the inequality 
 \(\id_{z\in[s,t]}\geq \int_{s+\epsilon}^{t-\eps}\eps^{-1}\id_{[z,z+\eps)}(x)\,dx\), gives that 
\begin{equation*}
 \hat{m}_k \big(\Psi_R(I\times A) \big) \ge m_R^{-,k}. \qedhere
\end{equation*}
\end{proof}

 \subsection{Proof of Proposition \ref{p:sc}}
We need to show that, for $I \subseteq [0,1]$ and $A \subset \R$, 
 \begin{equation}
     \label{e:sc2}
\lim_{R \to \infty} m_R^{\pm,k} =  \Big(|I| \int_A e^{-y} dy \Big)^k ,
 \end{equation}
and also that, for  $I = [0,1]$ and $A = [a,\infty)$,
 \begin{equation}
     \label{e:sc2b}
     \lim_{R \to \infty} m_R^{+,1} =   \int_A e^{-y}dy .
     \end{equation}
     
We collect some properties of the function $G(r)$ that will be essential in the proof:

\begin{claim}
\label{c:gap}
Under the conditions of Theorem \ref{t:main}, for every choice of $\gamma > 0$, and for every $s \in \R$, the following hold: 
\begin{enumerate}
    \item  As $R \to \infty$, uniformly over $x \in [ t_R(s)- \eps_R, t_R(s) + \eps_R]$, 
    \[ G(x) \sim    R^{-1+o(1)} \qquad \text{and} \qquad  G''(x) \le    R^{-1+o(1)} , \]
\item As $R \to \infty$,
\[ t_R(s) = R^{o(1)}  . \]
\end{enumerate}
\end{claim}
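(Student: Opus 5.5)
The whole proof rests on Proposition~\ref{p:gpp}, which gives $\theta(r)\sim-\log G(r)$ and $\log G''(r)\le \log G(r)(1+o(1))$. It also uses Proposition~\ref{p:per2}: $\theta(r)$ grows like $\zeta r$ or $\zeta r^\alpha\log r$, and in particular $\theta(r)\to\infty$ regularly, with $\theta$ continuous and non-decreasing. The last two facts hold because $-G'$ is positive and non-increasing by the convexity in Lemma~\ref{l:lambda}, and continuous since $G$ is twice differentiable. I would prove item~(2) first and then deduce item~(1) from it.

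\textbf{Item (2).} By definition $\theta(t_R(s))=\log R+s$, so $t_R(s)\to\infty$ as $R\to\infty$.
\begin{itemize}
\item In case (1) of Proposition~\ref{p:per2} we have $\theta(r)\sim\zeta r$, hence $\zeta t_R(s)(1+o(1))=\log R+s$ and $t_R(s)=O(\log R)=R^{o(1)}$.
\item In case (2) we have $\zeta t^\alpha\log t\,(1+o(1))=\log R+s$. This gives $t^\alpha\le C\log R$, so $t_R(s)\le (C\log R)^{1/\alpha}=R^{o(1)}$.
\end{itemize}

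\textbf{Item (1).} Fix $x\in[t_R(s)-\eps_R,t_R(s)+\eps_R]$. Since $\eps_R=R^{-\gamma}\to0$ and $t_R(s)\to\infty$, we have $x\to\infty$ uniformly in this window.

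By Proposition~\ref{p:gpp}, $-\log G(x)=\theta(x)(1+o(1))$. It then suffices to show $\theta(x)=\log R\,(1+o(1))$ uniformly in the window.

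Monotonicity of $\theta$ gives
\[\theta(t_R(s)-\eps_R)\le\theta(x)\le\theta(t_R(s)+\eps_R).\]
Both outer quantities are $\theta(t_R(s))(1+o(1))=(\log R+s)(1+o(1))$. This follows from the asymptotic forms in Proposition~\ref{p:per2}, applied at arguments $t_R(s)\pm\eps_R$ whose ratio to $t_R(s)$ tends to $1$: both asymptotic profiles $\zeta r$ and $\zeta r^\alpha\log r$ are regularly varying, so $\theta(r(1+o(1)))\sim\theta(r)$.

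Hence $-\log G(x)=\log R\,(1+o(1))$, that is, $G(x)=R^{-1+o(1)}$. The same chain combined with $\log G''(x)\le\log G(x)(1+o(1))$ yields $G''(x)\le R^{-1+o(1)}$.

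\textbf{Main obstacle.} The only delicate point is uniformity and invertibility: Proposition~\ref{p:per2} gives only a first-order asymptotic equivalence, not an exact form. I would handle this entirely through the monotonicity of $\theta$ and the sandwich above, so the left-continuous inverse causes no issues. I do not expect any further difficulty.
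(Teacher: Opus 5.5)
Your proof is correct, but you handle the $\eps_R$-window differently from the paper.

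\textbf{The paper's route.} It works first at the exact point $t_R(s)$, where $-G'(t_R(s))=e^{-s}/R$ by definition. Proposition~\ref{p:gpp} then gives $G(t_R(s))=R^{-1+o(1)}$ and $G''(t_R(s))\le R^{-1+o(1)}$. Next it uses the inverse function rule $t_R'(u)=-G'(t_R(u))/G''(t_R(u))\ge R^{-o(1)}$ on $[s',s'']$ to show that $[t_R(s)-\eps_R,t_R(s)+\eps_R]\subset[t_R(s'),t_R(s'')]$ for fixed $s'<s<s''$. It concludes by monotonicity of $G$ and $-G'$. Item (2) is deduced from $\theta(r)/\log r\to\infty$, essentially as you do.

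\textbf{Your route.} You control the window via monotonicity of $\theta$ and the regularly varying first-order profile of $\theta$ from Proposition~\ref{p:per2}.

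\textbf{Comparison.} Your argument is shorter and avoids differentiating the left-continuous inverse $\theta^{-1}$. It also works for any window of width $o(t_R(s))$, not only for polynomially small $\eps_R$. The price is that it relies on the explicit asymptotic form of $\theta$, and it only yields $\theta(x)=\log R\,(1+o(1))$ on the window. The paper's argument uses nothing beyond the comparison estimates of Proposition~\ref{p:gpp} and the exact normalisation of $t_R$. As a result it implicitly gives the sharper $\theta(x)=\log R+s+o(1)$ uniformly on the window. It also carries over unchanged to the abstract setting of Proposition~\ref{p:ppp}, where no explicit profile for $\theta$ is available.
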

\begin{proof}
First we observe that 
\begin{equation}
    \label{e:gbounds}
 G(t_R(s)) \sim    R^{-1+o(1)} \qquad \text{and} \qquad  G''(t_R(s)) \le  G(t_R(s))^{1+o(1)} \sim   R^{-1+o(1)}  . 
 \end{equation}  
Indeed recall that Proposition \ref{p:gpp} states that 
\begin{equation}
    \label{e:gcomp}
-G'(r) \sim G(r)^{1+o(r)} \quad \text{and} \quad G''(r) \le G(r)^{1+o(1)}   .
\end{equation} 
and so \eqref{e:gbounds} follows by combining \eqref{e:gcomp} with the definition 
\[   - G'(t_R(s))  =   e^{- \theta(  \theta^{-1}(s + \log R) ) } = R^{-1} e^{-s}  . \]
 Next we fix $s' < s < s''$, and claim that, for sufficiently large $R$,
\begin{equation}
    \label{e:tcomp}
 t_R(s') \le t_R(s) -  \eps_R \le t_R(s) +  \eps _R \le t_R(s'') 
 \end{equation} 
holds. Indeed by the inverse function rule, for every $u \in \R$,
\[ t'_R(u) = (\theta^{-1})'(u + \log R) = \frac{ - G'(\theta^{-1}(u + \log R))}{G''(\theta^{-1}(u + \log R))} = \frac{ - G'(t_R(u))}{G''(t_R(u))} .\]
Since $-G'$ is non-increasing and $t_R(u)$ is non-decreasing, for $u \in [s',s'']$ we further have
\[ - G'(t_R(u)) \ge -G'(t_R(s'')) = e^{-s''}/R,\]
and by invoking \eqref{e:gcomp} and monotonicity again, also
\[  G''(t_R(u)) \le  - G'(t_R(u))^{1+o(1)} \le  -G'(t_R(s'))^{1+o(1)} = R^{-1+o(1)} . \]
Combining we see that $t'_R(u)  \ge R^{o(1)}$ uniformly over $u \in [s',s'']$, and therefore,  since $\eps_R R^{o(1)} \to 0$, \eqref{e:tcomp} follows from a Taylor expansion. We deduce the first statement of the claim by combining \eqref{e:gbounds}, \eqref{e:tcomp}, and the monotonicity of $G$.

The second statement comes from observing that, under the conditions of Theorem \ref{t:main}, $\theta(r) / \log r \to \infty$, so we have that $\theta^{-1}(r) = e^{o(r)} $, and hence $t_R(s) = R^{o(1)}$. 
\end{proof}

Let us now prove \eqref{e:sc2b}, which is instructive as a warm up for \eqref{e:sc2}. By stationarity, $\mathbb{E}[h^+_R(x)]$ is independent of $x$. Hence for $I= [0,1]$ and $A = [a,\infty)$ we have
\begin{equation}
    \label{e:sc2ba}
m_R^{+,1} =  \E\int_{[-\eps_R, R+\eps_R]} h^+_R(x) \, dx = \mathbb{E}[h_R^+(0) ] (R+2\eps_R)  . 
\end{equation}
Moreover, again by stationarity,
\[ \mathbb{E}[h^+_R(0)] =\eps_R^{-1} \big( G(t_R(a)-\eps_R ) -G(t_R(a))   \big) . \]
Taylor expanding $G$, we have 
\[ \Big| \mathbb{E}[h^+_R(x)]  + G'(t_R(a) ) \Big| \le  (\eps_R/2)  \max_{ s \in [t_R(a) - \eps_R, t_R(a) ]  } |G''(s)| . \]
Recall that by definition $ - G'(t_R(a)) = R^{-1} e^{-a}$. Combining with the first item in Claim \ref{c:gap}, 
\[ \Big| \mathbb{E}[h^+_R(x)]  - R^{-1}  e^{-a}  \Big| = R^{-1-\gamma + o(1)} . \]
Inputting this into \eqref{e:sc2ba} gives \eqref{e:sc2b}.

 \smallskip
 We move onto the proof of \eqref{e:sc2}. In fact we will focus on the statement for $m_R^{+,k}$, since the statement for $m_R^{-,k}$ can be proven analogously. Moreover, we only consider the case $k\geq 2$, since the case $k = 1$ follows from the same reasoning as in the proof of \eqref{e:sc2b} (see \eqref{k11}–\eqref{k13} below). 
 
 Define a mesoscopic scale $s_R = R^\delta$, with $\delta\in(0,1)$ to be chosen later. We introduce a mesoscopic `off-diagonal' indicator
\[  D^{\neq}_R(x_1,\ldots,x_k)=   \id_{ | x_i - x_j | \ge s_R \,  \ \forall i \neq j } .\]
The proof of \eqref{e:sc2} (for $m_R^{+,k}$), and hence Proposition \ref{p:sc}, results from combining the following three statements: for every integer $k \ge 2$, there exists a suitable choice of $\gamma > 0$ and $\delta \in (0,1)$ such that, as $R \to \infty$
\begin{equation}
\label{e:sc3}
 \int_{[Ri_1-\eps_R, Ri_2+\eps_R]^k} \mathbb E \Big[ \prod_{i \leq k} h^+_R(x_i)   \Big] D^{\neq}_R(x_1,\ldots,x_k) \, dx_1 \cdots dx_k   \to  \Big( |I| \int_A e^{-y} dy \Big)^k , 
 \end{equation} 
 and
  \begin{equation}
  \label{e:sc4}
  \int_{[-2R, 2R]^k} \mathbb E \Big[ \prod_{i \leq k} h^+_R(x_i)  D^{\neq}(x_1,\ldots,x_k) \Big]  (1 - D_R^{\neq}) (x_1,\ldots,x_k)   \, dx_1 \cdots dx_k    \to 0,
  \end{equation}
  and the following identity holds
  \begin{equation}
\label{e:sc5}
\int_{[Ri_1-\eps_R, Ri_2+\eps_R]^k} \mathbb E \Big[ \prod_{i \leq k} h^+_R(x_i) (D^{\neq} - 1)(x_1,\ldots,x_k) \Big]  D^{\neq}_R(x_1,\ldots,x_k)  \, dx_1 \cdots dx_k    = 0.
  \end{equation}
Indeed \eqref{e:sc2} follows from \eqref{e:sc3}--\eqref{e:sc5} by decomposing $ D^{\neq}$ in the definition of $m_R^{+,k}$ as
  \begin{equation}
      \label{e:decomp}
   D^{\neq} = D_R^{\neq}  + (1 - D^{\neq}_R) D^{\neq}  + D_R^{\neq}( D^{\neq} - 1),  \end{equation}
  and observing that, by the non-negativity of the integrand 
  \[ \Big( \prod_{i \leq k} h^+_R(x_i)  D^{\neq}(x_1,\ldots,x_k) \Big)  (1 - D_R^{\neq}) (x_1,\ldots,x_k), \]
  the convergence of the integral in $\eqref{e:sc4}$ over the larger domain $[-2R, 2R]^k$ implies its convergence over the smaller domain $[Ri_1-\eps_R, Ri_2+\eps_R]^k$ as well. 

\smallskip
 The proof of \eqref{e:sc5} is straightforward. Observe that if both
 \[ D^{\neq}(x_1,\ldots,x_k) = 0 \qquad \text{and} \qquad  D^{\neq}_R(x_1,\ldots,x_k) = 1, \]
then there exist two points, say $x_1, x_2$, such that $|x_1 - x_2 | > s_R = R^\delta$ and both lie in the same gap $[z_1, z_2]$. By the second item of Claim \ref{c:gap}, this implies that $h_R^+(x_1)=h_R^+(x_2)=0$ by definition. Hence the integrand 
\[ \Big( \prod_{i \leq k} h^+_R(x_i) (D^{\neq} - 1)(x_1,\ldots,x_k) \Big)  D^{\neq}_R(x_1,\ldots,x_k) \]
is identically zero, which gives \eqref{e:sc5}. 

\smallskip
\noindent It remains to establish \eqref{e:sc3} and \eqref{e:sc4}. The proof of the former relies on the splitting bounds from Section \ref{s:split}. The clustered decoupling from Section \ref{ss:cd} and the absence of clustering from Section \ref{s:clus} are used in the proof of the latter.

\subsubsection{Proof of \eqref{e:sc3}.}
We decompose the left-hand side of \eqref{e:sc3} as $A+B$ where
\[
A =  \int_{[Ri_1-\eps_R, Ri_2+\eps_R]^k} \prod_{i \leq k} \mathbb{E}  \Big[  h^+_R(x_i)   \Big] D^{\neq}_R(x_1,\ldots,x_k) \, dx_1 \cdots dx_k  , \]
and 
\[ B = \int_{[Ri_1-\eps_R, Ri_2+\eps_R]^k} \Big( \mathbb{E}  \Big[ \prod_{i \leq k}   h^+_R(x_i)   \Big] - \prod_{i \leq k}   \mathbb{E}  \Big[  h^+_R(x_i)   \Big] \Big)  D^{\neq}_R(x_1,\ldots,x_k) \, dx_1 \cdots dx_k .
\]
Let us focus first on $A$. Arguing as in the proof of \eqref{e:sc2b}, by stationarity
\begin{equation}\label{k11}\mathbb{E}[h^+_R(x)] =\eps_R^{-1} \big( G(t_R(a)-\eps_R ) -G(t_R(a))  - G(t_R(b)) + G(t_R(b) +\eps_R) \big) , \end{equation}
and Taylor expanding $G$
\begin{equation}\label{k12}\Big| \mathbb{E}[h^+_R(x)] -\Big(-G'(t_R(a))+G'(t_R(b)) \Big) \Big|  \le   2 \eps_R \max_{ s \in \{a,b\} } \max_{x \in [t_R(s) - \eps_R, t_R(s) + \eps_R]  } |G''(x)| .\end{equation}
Since $ - G'(t_R(s)) = R^{-1} e^{-s}$,  and by the first item in Claim \ref{c:gap}, 
\begin{equation}\label{k13} \Big| \mathbb{E}[h^+_R(x)]  - R^{-1} \big( e^{-a} - e^{-b}  \big) \Big| = R^{-1-\gamma + o(1)} . \end{equation}
Since
\[ \int_{[Ri_1-\eps_R, Ri_2+\eps_R]^k} \prod_{i \leq k} D^{\neq}_R(x_1,\ldots,x_k) \, dx_1 \cdots dx_k = |I|^k R^k + O(R^{k-1+\delta}) , \]
we conclude that
\begin{align*}
 A  &= \left( R^{-1}(e^{-a} - e^{-b}) +  R^{-1-\gamma+o(1)} \right)^k \left( |I|^k R^k + O(R^{k-1+\delta}) \right)\\
 & = |I|^k (e^{-a} - e^{-b})^k +   O(R^{-\gamma+o(1)} + R^{-1+\delta})\\
 & \to |I|^k (e^{-a} - e^{-b})^k 
\end{align*}
since $\gamma > 0$ and $\delta \in (0,1)$. 

We now show that $B \to 0$, which makes use of the splitting property. Each $h^+_R(x_i)$ is a linear combination of four indicator functions, and so $\prod_{i=1}^k h^+_R(x_i)$ expands into $4^k$ terms of the form
\[ \varepsilon_R^{-k} c_j \prod_{i=1}^k \id_{\mathcal{G}_{I_i}},\]
where $c_j \in \{-1, 1\}$ and $I_{i}$ are intervals of length in $[t_R(a) - \eps_R, t_R(b)+\eps_R]$. In particular, by the second item of Claim \ref{c:gap} the lengths are at most $R^{o(1)}$. Moreover since we assume $D_R^{\neq}(x_1, \dots, x_k) = 1$, the intervals $I_{i,\sigma}$ are separated by at least $s_R - (t_R(b)+\eps_R) \ge s_R - R^{o(1)} \sim R^{\delta}$. Therefore, recalling the definition of $\rho^k_{r,s}$ in Section \ref{s:split}, for each $1 \le j \le 4^k$, we have
\begin{equation}
    \label{e:t} \Big| \mathbb{P}\ \Big( \bigcap_{i=1}^k \mathcal{G}_{I_{i}} \Big) - \prod_{i=1}^k \mathbb{P}(\mathcal{G}_{I_{i}}) \Big| \le \rho^k_{r,s} \prod_{i=1}^k \mathbb{P}(\mathcal{G}_{I_{i}})
\end{equation}  
for $r  = R^{o(1)}$ and $s \sim R^\delta$.  By monotonicity and the first item of Claim \ref{c:gap}, we further have
\begin{equation}
    \label{e:pbound}
 R^{-1+o(1)} = G(t_R(b)+\eps_R)  \le \mathbb{P}(\mathcal{G}_{I_{i}})\le G(t_R(a)-\eps_R) =  R^{-1+o(1)} ,
\end{equation}
and so the right-hand side of \eqref{e:t} is at most $ \rho^k_{r,s} R^{-k+o(1)}$. Summing over the $4^k$ terms and integrating, we obtain
\[ |B| \le  4^k  (4R)^k  \varepsilon_R^{-k} \rho^k_{r,s} R^{-k+o(1)} = R^{k \gamma + o(1)} \rho^k_{r,s} . \]
To finish, suppose that the first assumption in Theorem \ref{t:main} holds, and in particular $K(x)x^\alpha\to0$ for every $\alpha > 0$. By the weak splitting in Proposition \ref{p:wsplit} and \eqref{e:pbound}, we have 
\[ \rho_{r,s}^k\leq R^{o(1)} \bar K(R^{\delta+o(1)}) R^{k+o(1)} = R^{- \alpha \delta + k +o(1)}. \]
Picking $\alpha$ so that $-\delta \alpha + k+k\gamma<0$, we obtain $|B| \le R^{-\delta \alpha + k+k\gamma+o(1)}\to 0 $ as required.

 Suppose instead that the second assumption in Theorem \ref{t:main} holds, so that $K(x)\sim |x|^{-\alpha}$ as $x\to\infty$ for some $\alpha > 0$. Then by the strong splitting in Proposition \ref{p:ssplit} and \eqref{e:pbound}, we have  
 \[ \rho_{r,s}^k\leq R^{o(1)}\bar K(R^{\delta+o(1)}) (\log R)^3 \leq  R^{-\alpha\delta + o(1)} \]
 and so  $|B| \le  R^{-\alpha\delta + k\gamma +o(1)}$.
Choosing $\gamma <\alpha\delta/k$ completes the proof.

\subsubsection{Proof of \eqref{e:sc4}.} 
 Recall that 
\[ 0 \le h^+_R(x_i) \le \eps_R^{-1} \id_{\mathcal{G}_{[x_i, x_i + t_R(a) - \eps_R]}} \]
and define the set
\[ \mathcal{D} = \{x_1,\ldots,x_k \in [-2R,2R] :  [x_i, x_i + t_R(a) - \eps_R] \text{ are pairwise disjoint}\}. \]
In particular, recalling the indicator of distinct gaps $D^{\neq}$, if
\[ \prod_{i=1}^k h^+_R(x_i) D^{\neq}(x_1,\ldots,x_k) > 0 \]
then necessarily $(x_1,\ldots,x_k) \in \mathcal{D}$. Hence it suffices to prove that
 \begin{equation}
     \label{e:sc4reduce}
\eps_R^{-k} \int_{\mathcal{D}} \mathbb P \big[ \cap_{i=1}^k \mathcal{G}_{[x_i, x_i + t_R(a) - \eps_R] }  \big]  (1 - D_R^{\neq}) (x_1,\ldots,x_k)   \, dx_1 \cdots dx_k    \to 0.
 \end{equation}
Fix $x_1,\ldots,x_k \in \mathcal{D}$ such that $D_R^{\neq}(x_1,\ldots,x_k) =0$. Let $(I_i)_{i\ge 1}$ denote the set of disjoint intervals $([x_i, x_i + t_R(a) - \eps_R])_{i\ge 1}$ arranged in increasing order. Let $c > 1$ be a constant which, under the second condition of Theorem \ref{t:main}, we define as in Proposition \ref{p:cssplit} (under the first condition it is arbitrary). Let 
\[ d(I_i, I_{i+1}) = x_{i+1} - (x_i + t_R(a) - \eps_R) \]
be the distance between successive intervals $I_i$ and $I_{i+1}$. Define $r =t_R(a) - \eps_R$ and $s = s_R - r$, which by Claim \ref{c:gap} satisfy $r= R^{o(1)}$ and $s \sim R^\delta$. Then choose $0 \le v \le k$ such that the interval
\[ \big( (2kc)^v s , (2kc)^{v+1} s \big]  \]
does not intersect $(d(I_i, I_{i+1}) )_{1 \le i \le k}$. Consider the graph on $\{1,\ldots,k\}$ with edge ${i,i+1}$ if and only if $d(I_i,I_{i+1}) \le s(2k c)^{v}$. This induces a clustering $(\mathcal{C}_\ell)_{1 \le \ell \le m}$ of the intervals $I_i$. Recall the collection $\bar{\mathcal{Q}}^k(r,\bar r,s)$ from Section \ref{s:split}. We claim that, for sufficiently large $R$, $(\mathcal{C}_\ell)_{1 \le \ell \le m} \in \bar{\mathcal{Q}}^k(r,\bar r, \bar s) $ for $\bar{s} =  s(2kc)^{v+1} $ and  $\bar r = \bar{s} / c $. Indeed each cluster contains at most $k$ intervals which are separated by $\le s(2k c)^{v}$, so is contained in an interval of length at most 
\[ k \big(r + s(2k c)^v \big) = kr +  \frac{ s(2kc)^{v+1}}{2c}  \le  \frac{ s(2kc)^{v+1}}{c}  =: \frac{ \bar s}{c} \]
where the inequality used that $r/s \to 0$ as $R \to \infty$. 

Let $m_s$ denote the number of singleton clusters, and let $m' = m - m_s$ be the number of clusters which contain at least two elements. Without loss of generality we index the latter as $\mathcal{C}_\ell$ for $1 \le \ell \le m'$, and we assume that $m \le k-1$ and $m' \ge 1$, since this is equivalent to $D_R^{\neq}=0$. Define 
\[ P_0 = \prod_{\ell \ge 1} \mathbb P \big[ \cap_{I_i  \in \mathcal{C}_\ell} \mathcal{G}_{I_i} \big] .  \]
Recalling the clustering coefficient $\phi_{r}$ from Section \ref{s:clus}, we have
\[ P_0 \le  \phi_r^{m'} G( r )^{m_s}  \le \phi_r^{m'}  R^{- m_s + o(1) }   . \]
 By Corollary \ref{c:clus}, there exists a $\eta \in (0, 1)$ such that
\[ \phi_r \le G(r)^{1+\eta + o(1)} = R^{-1-\eta + o(1)} , \]
so we have, since we assume $m' \ge 1$,
\begin{equation}
    \label{e:p0bound}
P_0 \le R^{-m' - m'\eta + o(1)} R^{-m_s + o(1)} = R^{-m' - m_s - m' \eta + o(1)} \leq R^{-m - \eta + o(1)}. 
\end{equation}
We next claim that, under the assumptions of Theorem \ref{t:main}, 
\begin{equation}
    \label{e:p0bound2}
 \mathbb P \big[ \cap_{i=1}^k \mathcal{G}_{I_i}  \big] \le  P_0 R^{o(1)}   + R^{-k+o(1)} .
 \end{equation}
Indeed suppose that the first assumption in Theorem \ref{t:main} holds, so in particular $K(x)x^\alpha\to0$ for every $\alpha > 0$. Then by Proposition \ref{p:cwsplit},
\[ \mathbb P \big[ \cap_{i=1}^k \mathcal{G}_{I_i}   \big]   \le P_0 + c_1 r^2 \bar{K}(s) \]
 and the conclusion \eqref{e:p0bound2} follows. Suppose instead that the second assumption in Theorem \ref{t:main} holds, so that $K(x)\sim |x|^{-\alpha}$ as $x\to\infty$ for some $\alpha > 0$. Then by Proposition \ref{p:cssplit},
\[ \mathbb P \big[ \cap_{i=1}^k \mathcal{G}_{I_i}  \big]   \le   c P_0^{1 -  c r^2   \bar{K}(s)  }   = c P_0^{1 +  o(1)}    \]
which again yields \eqref{e:p0bound2}.

To finish we observe also that the subset of $\mathcal{D}$ of corresponding to a given clustering $(\mathcal{C}_\ell)_{1 \le \ell \le m}$ has volume at most 
\[  R^{m} (k  \bar{r} )^k = R^{m + k \delta + o(1) } .\]
Summing up over all possible clusterings, integrating, and combining with \eqref{e:p0bound}--\eqref{e:p0bound2}, the left-hand side of \eqref{e:sc4reduce} is at most
\[ R^{   k \gamma} R^{m + k \delta + o(1) } ( R^{-m -  \eta + o(1)  } + R^{-k + o(1)} ) = R^{   k \gamma + k \delta  -  \eta + o(1) }  \]
where we used that $m \le k-1$ and $\eta \in (0,1)$. Choosing $\delta \in (0,1)$ and $\gamma > 0$ so that $\delta + \gamma < \eta / k$, the previous display tends to zero, completing the proof.

\subsection{A general Poisson approximation result}
\label{ss:gen}
In the proof of Theorem \ref{t:main} we used relatively few properties of the zero set. In view of possible future applications, we collect here the properties that were used:

\begin{proposition}[General Poisson approximation of large gaps]
\label{p:ppp}
Suppose a stationary point process $\mathcal{Z}$ on $\R$ satisfies the following properties, with $G$, $\mathcal{G}_I$, $\rho_{r,s}^k$, and $\phi_r$ defined as above:
\begin{enumerate}
\item (Gap decay) $-\log G(r)  / \log r \to \infty$.
\item (Regularity) $G$ is twice differentiable, convex, and
\[ \log (-G'(r)) \sim \log G(r) \qquad \text{and} \qquad  \log G''(r) \le \log G(r)(1+o(1)) .\]
\item (Splitting) For every $k, \delta > 0$ there exists $\delta' > 0$ such that, for every $r =R^{o(1)}$ and  $s = R^{\delta+o(1)}$,
\[ \rho^k_{r,s} \le R^{-\delta' + o(1) } . \]
\item (Clustered decoupling) For every $k, \delta > 0$ there exists $c > 0$ such that, for every $r =R^{o(1)}$, $s = R^{\delta+o(1)}$, and $\bar{r} \le s /  c$, and every  $(\mathcal{C}_{\ell})_{\ell \ge 1} \in \bar{\mathcal{Q}}^k_{r,\bar r,s}$,
\[  \P \big[ \cap_i \mathcal{G}_{I_i} \big]  \le R^{o(1)} \Big(  \prod_{\ell \ge 1} \P \big[ \cap_{I_i \in \mathcal{C}_\ell}  \mathcal{G}_{I_i} \big]  \Big)^{1 + o(1)} + R^{-k} \Big) .\]
\item (Absence of clustering) There exists $\eta > 0$ such that 
\[ \phi_r \le G(r)^{1+\eta + o(1)} .\]
\end{enumerate}
Then the largest gaps of $\mathcal{Z}$ obey a Poisson approximation in the sense of Definition \ref{d:pa}.
\end{proposition}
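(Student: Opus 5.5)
The plan is to re-run the proof of Theorem \ref{t:main} from Section \ref{s:pois} almost verbatim, checking at each step that only properties (1)--(5) of $\mathcal{Z}$ are invoked. We first reduce to Proposition \ref{p:sc}. The comparison \eqref{e:comp} between factorial moments of $\Psi_R(I\times A)$ and $m_R^{\pm,k}$ is purely combinatorial and holds for any locally finite point process. The passage from factorial moments to vague convergence (via \cite{BB13}, together with the tightness at $y\to\infty$ from item (2) of Proposition \ref{p:sc}) is likewise general. It therefore suffices to verify Proposition \ref{p:sc} under (1)--(5).

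Next we re-derive Claim \ref{c:gap}. Property (2) plays the role of Proposition \ref{p:gpp}. Indeed, $-G'(t_R(s))=R^{-1}e^{-s}$ together with $\log(-G')\sim\log G$ gives $G(t_R(s))=R^{-1+o(1)}$, and $\log G''\le \log G(1+o(1))$ gives $G''\le R^{-1+o(1)}$ near $t_R(s)$. The inverse-function argument then yields $t_R'(u)\ge R^{o(1)}$, which is what lets us absorb the shifts by $\eps_R$. For this step we need the convexity of $G$ and the monotonicity of $G'$, both guaranteed by (2). Property (1) gives $\theta(r)/\log r\to\infty$, hence $t_R(s)=R^{o(1)}$. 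With Claim \ref{c:gap} in hand, \eqref{e:sc2b}, the term $A$ in \eqref{e:sc3}, and the identity \eqref{e:sc5} follow exactly as before, by stationarity and Taylor expansion.

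For the term $B$ in \eqref{e:sc3}, the earlier argument gives $|B|\le R^{k\gamma+o(1)}\rho^k_{r,s}$ with $r=R^{o(1)}$ and $s\sim R^{\delta}$. Property (3) supplies $\rho^k_{r,s}\le R^{-\delta'+o(1)}$ for some $\delta'=\delta'(k,\delta)>0$, so choosing $\gamma<\delta'/k$ gives $B\to0$.

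For \eqref{e:sc4} we reuse the multiscale clustering construction unchanged. We pick $v\le k$ so that no inter-interval distance falls in $((2kc)^vs,(2kc)^{v+1}s]$, with $c$ the constant from (4), and this yields a clustering in $\bar{\mathcal{Q}}^k_{r,\bar r,\bar s}$ with $\bar r\le \bar s/c$. Property (5) gives $P_0\le R^{-m-\eta+o(1)}$, and property (4) replaces \eqref{e:p0bound2}, giving $\P[\cap\mathcal{G}_{I_i}]\le R^{o(1)}P_0^{1+o(1)}+R^{-k+o(1)}\le R^{-m-\eta+o(1)}+R^{-k+o(1)}$. The volume count $R^{m+k\delta+o(1)}$ and the choice $\delta+\gamma<\eta/k$ then finish the argument.

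The main obstacle is the order of choosing the parameters. In \eqref{e:sc4} we need $\delta$ small relative to $\eta/k$, and only afterwards can $\gamma$ be chosen below both $\delta'(k,\delta)/k$ and $\eta/k-\delta$. This forces the choice $\delta$ first, then $\delta'$ from (3), then $\gamma$. A second point to check is that the scales $\bar s=(2kc)^{v+1}s$ still satisfy $\bar s=R^{\delta+o(1)}$, so that (4) applies uniformly over the finitely many values of $v$; this holds because $v\le k$ and $c$ is fixed.
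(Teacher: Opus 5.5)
Your proposal is correct and follows the argument the paper has in mind: the paper gives no separate proof, only remarking that Section~\ref{s:pois} used nothing beyond (1)--(5). You carry out that check, with (2) (together with (1) for $t_R(s)=R^{o(1)}$) replacing Proposition~\ref{p:gpp}, (3) and (4) replacing the splitting and clustered-decoupling bounds, (5) replacing Corollary~\ref{c:clus}, and the parameters chosen in the right order ($\delta$, then $\delta'$ and $c$, then $\gamma$). One small detail to add is the harmless normalization $\eta<1$ used in the paper: without it, $\delta+\gamma<\eta/k$ would not by itself control the term $R^{k(\gamma+\delta)+m-k}\le R^{k(\gamma+\delta)-1}$ that comes from the additive $R^{-k}$ error in (4).
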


\medskip

\bibliographystyle{halpha-abbrv}
\bibliography{gaps}

@article{al21,
    title={Zeros of smooth stationary {G}aussian processes},
    author={M. Ancona and T. Letendre},
    journal={Electron. J. Probab.},
    volume={26},
    number={68},
    year={2021},
    pages={1--81},
}

@article{bb13,
title={Extreme gaps between eigenvalues of random matrices},
author={G. Ben Arous and P. Bourgade},
journal={Ann. Probab.},
volume={41},
number={4},
year={2013},
pages={2648--2681},
	}

@article{bmr20,
	title={A covariance formula for topological events of smooth {G}aussian fields},
	author={D. Beliaev and S. Muirhead and A. Rivera},
	journal={Ann. Probab.},
	volume={86},
	number={6},
	year={2020},
	pages={2845--2893},
	}

@article{bou22,
	title={Extreme gaps between eigenvalues of {W}igner matrices},
	author={P. Bourgade},
	journal={J. Eur. Math. Soc.},
	volume={24},
	number={8},
	pages={2823--2873},
	year={2022},
	}

@article{cc26,
	title={Largest gaps between bulk eigenvalues of unitary-invariant random {H}ermitian matrices},
	author={Christophe Charlier},
	journal={Preprint, arXiv:2602.07524},
    year={2026},
	}

@article{ftw,
	title={Small gaps of {GOE}},
	author={R. Feng and G. Tian and D. Wei},
	journal={Geom. Funct. Anal.},
	volume={29},
	number={6},
	pages={1784--1827},
	year={2019},
	}

@article{fly,
	title={Small gaps of {GSE}},
	author={R. Feng and J. Li and D. Yao},
	journal={Preprint, arXiv:2409.03324},
	year={2024},
	}

@article{MY,
	title={Limit law for root separation in random polynomials},
	author={M. Michelen and O. Yakir},
	journal={Adv. Math.},
    volume={496},
    year={2026},
    pages={110990},
	}

@article{fw,
	title={Small gaps of circular $\beta$-ensemble},
	author={R. Feng and D. Wei},
	journal={Ann. Probab.},
	volume={49},
	number={2},
	pages={997--1032},
	year={2021},
	}

@article{fy,
	title={Smallest distances between zeros of {G}aussian analytic functions},
	author={R. Feng and D. Yao},
	journal={Adv. Math.},
  volume={496},
  year={2026},
  pages={110999},}

@article{fgy24,
	title={Smallest gaps between zeros of stationary {G}aussian processes},
	author={R. Feng and F. G{\"{o}}tze and D. Yao},
	journal={J. Func. Anal.},
	volume={287},
	number={4},
	year={2024},
	pages={110493},
	}

@article{fw25,
	title={Large gaps of {CUE} and {GUE}},
	author={R. Feng and D. Wei},
	journal={Ann. Probab.},
    volume={53},
    number={5},
    pages={1668-1702},
	year={2025},
}

@article{gsw86,
title = {An extreme value theory for long head runs},
author = {L. Gordon and M. F. Schilling and M. S. Waterman},
journal={Probab. Theory Related Fields},
year={1986},
volume={72},
pages={279--287},
}

@article{llm20,
	author={B. Landon and P. Lopatto and J. Marcinek},
	title={Comparison theorem for some extremal eigenvalue statistics},
	journal={Ann. Probab.},
	volume={48},
	number={6},
	year={2020},
	pages={2894--2919},
	}

@book{llr83,
  author = {M.R. Leadbetter and G. Lindgren and H. Rootz\'{e}n},
  title = {Extremes and Related Properties of Random Sequences and Processes},
  publisher = {Springer-Verlag, New York},
  year = {1983},
  }

@article{ffm25,
	author = {N. Feldheim and O. Feldheim and S. Mukherjee},
	title = {Persistence and ball exponents for {G}aussian stationary processes},
	journal = {Commun. Pure Appl. Math.},
    volume={78},
    number  = {10},
    pages={1949–2000},
	year = {2025},
}

@article{ffm26,
	author = {N. Feldheim and O. Feldheim and S. Muirhead},
	title = {Persistence and entropic repulsion of stationary {G}aussian fields with spectral singularity at the origin},
	journal = {Preprint, arXiv:2605.20587},
	year = {2026},
}

@article{mui23,
	author={S. Muirhead},
	title={A sprinkled decoupling inequality for {G}aussian processes and applications},
	journal={Electron. J. Probab.},
	volume={28},
	pages={1--25},
	year={2023},
}

@article{mm25,
	author={M. McAuley and S. Muirhead},
	title={Limit theorems for the number of sign and level-set clusters of the {G}aussian free field},
    journal={Preprint, arXiv:2501.14707},
	year={2025},
	}

@article{lo25,
	author={P. Lopatto and M. Otto},
	title={Maximum gap in complex {G}inibre matrices},
    journal={Preprint, arXiv:2501.04611},
	year={2025},
	}

@book{at07,
	Author = {R. Adler and J. Taylor},
	Publisher = {Springer},
	Title = {Random fields and geometry},
	Year = {2007}}

@book{kall,
author={O. Kallenberg},
title={Random Measures},
publisher={Akademie-Verlag Berlin},
year={1986},
}

@article{nov88,
author={S. Y. Novak},
title={Time intervals of constant sojourn of a homogeneous {M}arkov chain in a fixed subset of states},
journal={Siberian Math. J.},
volume={29},
number={1},
year={1988},
pages={100--109}
}

@article{nov92,
title={Longest runs in a sequence of m-dependent random variables},
author={S. Y. Novak},
journal={Probab. Theory Related Fields},
volume={91},
pages={269--281},
year={1992}
}

@Book{piterbarg,
    Author = {V.I. Piterbarg},
    Title = {Asymptotic Methods in the Theory of Gaussian Processes and Fields},
    ISBN = {0-8218-0423-5},
    Pages = {xii + 206},
    Year = {1996},
    Publisher = {Providence, RI: AMS},
    Language = {English},
    MSC2010 = {60G15 60-02 60G60 60G17},
    Zbl = {0841.60024}
}

@article{c25,
	author={C. Charlier},
	title={Smallest gaps of the two-dimensional {C}oulomb gas},
    journal={Preprint, arXiv:2507.23502},
	year={2025},
	}

\end{document}